\newcommand{\za}{\alpha}
\newcommand{\zb}{\beta}
\newcommand{\zd}{\delta}
\newcommand{\zg}{\gamma}
\DeclareMathOperator{\Ext}{Ext}
\DeclareMathOperator{\Hom}{Hom}
\DeclareMathOperator{\pd}{pd}
\DeclareMathOperator{\id}{id}
\DeclareMathOperator{\add}{add}
\DeclareMathOperator{\End}{End}
\DeclareMathOperator{\ind}{ind}
\DeclareMathOperator{\coker}{coker}
\newtheorem{theorem}{Theorem}[section]
\newtheorem{lemma}[theorem]{Lemma}
\newtheorem{prop}[theorem]{Proposition}
\newtheorem{cor}[theorem]{Corollary}
\theoremstyle{definition}
\newtheorem{mydef}[theorem]{Definition}
\newtheorem{example}[theorem]{Example}
\begin{document}

\thispagestyle{fancy}

\title{PROJECTIVE DIMENSIONS AND EXTENSIONS OF MODULES FROM TILTED TO CLUSTER-TILTED ALGEBRAS}
\author{Stephen Zito\thanks{The author was supported by the University of Connecticut.}}    
\maketitle

\begin{abstract}
We study the module categories of a tilted algebra $C$ and the corresponding cluster-tilted algebra $B=C\ltimes E$ where $E$ is the $C$-$C$-bimodule $\Ext_C^2(DC,C)$.  We investigate how various properties of a $C$-module are affected when considered in the module category of $B$.  We give a complete classification of the projective dimension of a $C$-module inside $\mathop{\text{mod}}{B}$.  If a $C$-module $M$ satisfies $\text{Ext}_C^1(M,M)=0$, we show two sufficient conditions for $M$ to satisfy $\text{Ext}_B^1(M,M)=0$.  In particular, if $M$ is indecomposable and $\text{Ext}_C^1(M,M)=0$, we prove $M$ always satisfies $\text{Ext}_B^1(M,M)=0$. 
\end{abstract}

\section{Introduction} 
We are interested in studying the representation theory of cluster-tilted algebras which are finite dimensional associative algebras that were introduced by Buan, Marsh, and Reiten in $\cite{BMR}$ and, independently, by Caldero, Chapoton, and Schiffler in $\cite{CCS}$ for type $\mathbb{A}$.
\par
One motivation for introducing these algebras came from Fomin and Zelevinsky's cluster algebras $\cite{FZ}$.  Cluster algebras were developed as a tool to study dual canonical bases and total positivity in semisimple Lie groups, and cluster-tilted algebras were constructed as a categorification of these algebras. To every cluster in an acyclic cluster algebra one can associate a cluster-tilted algebra, such that the indecomposable rigid modules over the cluster-tilted algebra correspond bijectively to the cluster variables outside the chosen cluster.  Many people have studied cluster-tilted algebras in this context, see for example $\cite{BBT,BMR,BMR2,BMR3,CCS2,KR}$.
\par
The second motivation came from classical tilting theory.  Tilted algebras are the endomorphism algebras of tilting modules over hereditary algebras, whereas cluster-tilted algebras are the endomorphism algebras of cluster-tilting objects over cluster categories of hereditary algebras.  This similarity in the two definitions lead to the following precise relation between tilted and cluster-tilted algebras, which was established in $\cite{ABS}$.
\par
There is a surjective map
\[
\{\text{tilted}~\text{algebras}\}\longmapsto \{\text{cluster-tilted}~\text{algebras}\}
\]
\[
C\longmapsto B=C\ltimes E
\]
where $E$ denotes the $C$-$C$-bimodule $E=\text{Ext}_C^2(DC,C)$ and $C\ltimes E$ is the trivial extension.
\par
This result allows one to define cluster-tilted algebras without using the cluster category.  It is natural to ask how the module categories of $C$ and $B$ are related and several results in this direction have been obtained, see for example $\cite{ABS2,ABS3,ABS4,BFPPT,BOW}$.  In this work, we investigate how various properties of a $C$-module are affected when the same module is viewed as a $B$-module via the standard embedding.  We let $M$ be a right $C$-module and define a right $B=C\ltimes E$ action on $M$ by 
\[
M\times B\rightarrow M~~,~~(m,(c,e))\mapsto mc.
\]
\par
 Our first main result is on the projective dimension of a $C$-module when viewed as a $B$-module.  Here, $\tau_C^{-1}$ and $\Omega_C^{-1}$ denote respectively the inverse Auslander-Reiten translation and first cosyzygy of a $C$-module.
 \begin{theorem}
 Let C be a tilted algebra, $E=\Ext_C^2(DC,C)$, and $B=C\ltimes E$ the corresponding cluster-tilted algebra.
 \begin{enumerate}
 \item[\emph{(a)}] If $\pd_CM=0$, then $\pd_BM=0$ if and only if $\id_CM\leq1$.  Otherwise, $\pd_BM=\infty$.
 \item[\emph{(b)}] If $\pd_CM=2$, then $\pd_BM=\infty$.
 \item[\emph{(c)}] Let $\pd_CM=1$ with minimal projective resolution $0\rightarrow P_1\rightarrow P_0\rightarrow M\rightarrow 0$.  Then $\pd_BM=1$ if and only if $\id_CM\leq1$ and $\tau_C^{-1}\Omega_C^{-1}P_0\cong\tau_C^{-1}\Omega_C^{-1}P_1$.  Otherwise, $\pd_BM=\infty$.
 \end{enumerate}
 \end{theorem}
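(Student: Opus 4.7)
\emph{Tools.} The proof rests on three inputs: (i) the Keller--Reiten theorem that cluster-tilted algebras are Gorenstein of dimension at most one, forcing $\pd_B N \in \{0, 1, \infty\}$ for every finitely generated $B$-module $N$; (ii) the standard identification $N \otimes_C E \cong \tau_C^{-1}\Omega_C^{-1} N$ for any $C$-module $N$, together with the equivalence $\tau_C^{-1}\Omega_C^{-1} N = 0 \Leftrightarrow \id_C N \leq 1$ (since $\Omega_C^{-1} N$ is injective exactly then and $\tau_C^{-1}$ annihilates injectives); and (iii) the observation that a $C$-module $N$, viewed as a $B$-module via $NE = 0$, has the same radical over $B$ as over $C$, so its $B$-projective cover is $P_C(N) \otimes_C B$ and fits in a $B$-module short exact sequence $0 \to \tau_C^{-1}\Omega_C^{-1} P_C(N) \to P_C(N) \otimes_C B \to P_C(N) \to 0$. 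A corollary used throughout: a $C$-module is $B$-projective if and only if it is a direct sum of indecomposable $C$-projectives of $C$-injective dimension at most one.

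\emph{Part (a).} For $M$ $C$-projective, the displayed short exact sequence with $P_C(N) = M$ shows $M$ is $B$-projective iff $\tau_C^{-1}\Omega_C^{-1} M = 0$ iff $\id_C M \leq 1$. When $\id_C M \geq 2$, the $B$-syzygy $\Omega_B M = \tau_C^{-1}\Omega_C^{-1} M$ is nonzero with no $C$-projective summand (being in the image of $\tau_C^{-1}$), so by the corollary it is not $B$-projective, $\pd_B M \geq 2$, and Gorenstein yields $\pd_B M = \infty$.

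\emph{Part (c).} Apply $-\otimes_C B$ to $0 \to P_1 \to P_0 \to M \to 0$. Under the hypothesis $\id_C M \leq 1$ we have $M \otimes_C B \cong M$ as $B$-modules (the $E$-action collapses since $M \otimes_C E = 0$), yielding $(P_1)_B \to (P_0)_B \to M \to 0$ with last map the $B$-projective cover. The $C$-module structure of its kernel $\Omega_B M$ is $P_1 \oplus \tau_C^{-1}\Omega_C^{-1} P_0$; comparing this with the $C$-module shape $\bigoplus \bigl(P_C(j_k) \oplus \tau_C^{-1}\Omega_C^{-1} P_C(j_k)\bigr)$ of any $B$-projective via Krull--Schmidt (separating projective and non-projective summands) shows $\Omega_B M$ is $B$-projective iff $\tau_C^{-1}\Omega_C^{-1} P_0 \cong \tau_C^{-1}\Omega_C^{-1} P_1$. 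In that case, checking that the $E$-action on $\Omega_B M$ (induced by the inclusion $P_1 \otimes_C E \hookrightarrow P_0 \otimes_C E$, which is now an iso) agrees with the canonical one on $(P_1)_B$ gives $\Omega_B M \cong (P_1)_B$ as $B$-modules, hence $\pd_B M = 1$. Conversely, if $\id_C M \geq 2$, then the Tor--Schanuel argument used for part (b) below, applied to the hypothetical $B$-projective resolution of length one, would force $\tau_C^{-1}\Omega_C^{-1} M = 0$, a contradiction; thus $\pd_B M \geq 2$ and Gorenstein gives $\infty$. Failure of the cosyzygy isomorphism alone is handled by the Krull--Schmidt step. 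Finally $\pd_B M > 0$ automatically since $M$ is non-$C$-projective.

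\emph{Part (b) and main obstacle.} For $\pd_C M = 2$, suppose $\pd_B M \leq 1$ toward a contradiction, with resolution $0 \to Q \to (P_0)_B \to M \to 0$. The short exact sequence $0 \to E \to B \to C \to 0$ of right $B$-modules, combined with $ME = 0$ and $E \cdot E = 0$, gives the change-of-rings identification $\mathrm{Tor}_1^B(M, C) \cong M \otimes_C E = \tau_C^{-1}\Omega_C^{-1} M$. Applying $-\otimes_B C$ to the resolution therefore produces
\[
0 \to \tau_C^{-1}\Omega_C^{-1} M \to Q \otimes_B C \to P_0 \to M \to 0,
\]
an exact sequence of $C$-modules in which $Q \otimes_B C$ is $C$-projective. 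Extracting the presentation $0 \to \tau_C^{-1}\Omega_C^{-1} M \to Q \otimes_B C \to \Omega_C M \to 0$ and comparing via Schanuel with $0 \to P_2 \to P_1 \to \Omega_C M \to 0$ yields $\tau_C^{-1}\Omega_C^{-1} M \oplus P_1 \cong P_2 \oplus (Q \otimes_B C)$. Since $\tau_C^{-1}\Omega_C^{-1} M$ has no projective summand, Krull--Schmidt forces $\tau_C^{-1}\Omega_C^{-1} M = 0$, and then $\Omega_C M \cong Q \otimes_B C$ is $C$-projective, contradicting $\pd_C M = 2$. The main technical obstacle is this change-of-rings identification $\mathrm{Tor}_1^B(M, C) \cong \tau_C^{-1}\Omega_C^{-1} M$, which requires verifying that the connecting map $M \otimes_B E \to M \otimes_B B = M$ vanishes (via $ME = 0$) and that $M \otimes_B E \cong M \otimes_C E$ (via $E \cdot E = 0$); the same identification also supplies the failure direction in part (c).
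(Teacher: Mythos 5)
Your argument is correct, but for parts (b) and (c) it takes a genuinely different route from the paper. The paper handles (b) by citing the Assem--Marmaridis criterion ($\pd_B(M\otimes_CB)\leq1$ iff $\pd_CM\leq1$ and $\Hom_C(DE,\tau_CM)=0$) together with the Assem--Zacharia fact that $\tau_B(M\otimes_CB)$ embeds in $\tau_BM$, and handles (c) by combining that criterion with a $\mathrm{Tor}_1^C(M,E)$ computation and the Auslander--Reiten formulas; you instead work entirely with syzygies over $B$: the change-of-rings identification $\mathrm{Tor}_1^B(M,C)\cong M\otimes_CE\cong\tau_C^{-1}\Omega_C^{-1}M$, Schanuel's lemma, and a Krull--Schmidt comparison of the $C$-module restriction of $\Omega_BM$ with that of a projective $B$-module. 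What your route buys is self-containedness (you avoid both external lemmas and the AR formulas, needing only Keller--Reiten, the identification $N\otimes_CE\cong\tau_C^{-1}\Omega_C^{-1}N$, and Lemma \ref{SS, Lemma 4.4} to exclude projective summands) plus an explicit identification $\Omega_BM\cong P_1\otimes_CB$, which the paper only records afterwards as Corollary \ref{Resolution}; what it costs is that your version of (b) uses $E\cdot E=0$, so it is tied to the trivial extension rather than the general split-by-nilpotent setting of the paper's Proposition \ref{2 case pd} (harmless here). One wording slip worth fixing: in (c) the map $P_1\otimes_CE\to P_0\otimes_CE$ is a priori a \emph{surjection} (because $M\otimes_CE=0$), not an inclusion; its injectivity is exactly $\mathrm{Tor}_1^C(M,E)=0$ and follows from the dimension count once the two modules are abstractly isomorphic, which is what makes $f_1\otimes 1_B$ an isomorphism onto $\Omega_BM$. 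You should also record the one-line verification behind your standing corollary, namely that the $E$-action on $P\otimes_CB$ vanishes iff $P\otimes_CE=0$ iff $\id_CP\leq1$, since every Krull--Schmidt step leans on it.
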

 
 Our second main result is on $C$-modules that satisfy $\Ext_C^1(M,M)=0$.  These are known as $\it{rigid}$ modules.  Here, our result holds in a more general setting with $C$ an algebra of global dimension equal to 2.  We determine two sufficient conditions to guarantee when a rigid $C$-module remains rigid when viewed as a $B$-module, i.e., $\text{Ext}_B^1(M,M)=0$.  Here, $\tau_C$ and $\Omega_C$ denote respectively the Auslander-Reiten translation and first syzygy of a $C$-module.
 \begin{theorem}
 Let M be a rigid C-module with a projective cover $P_0\rightarrow M$ and an injective envelope $M\rightarrow I_0$ in $\mathop{\emph{mod}}C$.  
 \begin{enumerate}
 \item[\emph{(a)}] If $\Hom_C(\tau_C^{-1}\Omega_C^{-1}P_0,M)=0$, then M is a rigid B-module.
 \item[\emph{(b)}] If $\emph{Hom}_C(M,\tau_C\Omega_CI_0)=0$, then M is a rigid B-module.
 \end{enumerate}
 \end{theorem}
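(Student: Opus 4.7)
The plan is to derive the decomposition
\[
\Ext_B^1(M, M) \;\cong\; \Ext_C^1(M, M) \;\oplus\; \Hom_C(M \otimes_C E,\, M),
\]
and then kill each summand using the two hypotheses in turn. Start from the $C$-projective cover $P_0 \to M$ and form $P_0^B := P_0 \otimes_C B$, which is a $B$-projective cover of $M$ because $B = C \oplus E$ as right $C$-modules and $E$ acts trivially on $M$. As a $C$-module, $\Omega_B M \cong \Omega_C M \oplus (P_0 \otimes_C E)$, but with a twisted $B$-structure: the right $E$-action sends $(x, 0)$ with $x \in \Omega_C M$ to $(0, \iota(x) \otimes e)$, where $\iota\colon \Omega_C M \hookrightarrow P_0$ is the inclusion. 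Thus $(\Omega_B M) \cdot E$ is the image of $\Omega_C M \otimes_C E \to P_0 \otimes_C E$, whose cokernel is $M \otimes_C E$ by right-exactness of $-\otimes_C E$ applied to $\Omega_C M \to P_0 \to M \to 0$.

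Next apply $\Hom_B(-, M)$ to $0 \to \Omega_B M \to P_0^B \to M \to 0$. Since $ME = 0$, every $B$-map into $M$ factors through the quotient by the right $E$-action, giving $\Hom_B(\Omega_B M, M) \cong \Hom_C(\Omega_C M, M) \oplus \Hom_C(M \otimes_C E, M)$. Simultaneously, the induction-restriction adjunction gives $\Hom_B(P_0^B, M) \cong \Hom_C(P_0, M)$, and a short check shows the connecting map sends $f$ to $(f|_{\Omega_C M}, 0)$ — the $M \otimes_C E$ component is killed because $E$ acts trivially on $M$. Taking cokernel and identifying $\Hom_C(\Omega_C M, M) / \Hom_C(P_0, M) = \Ext_C^1(M, M)$ produces the displayed formula.

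For part (a), rigidity of $M$ over $C$ removes the first summand. The surjection $P_0 \otimes_C E \twoheadrightarrow M \otimes_C E$ obtained by tensoring $P_0 \twoheadrightarrow M$ with $E$ gives an embedding $\Hom_C(M \otimes_C E, M) \hookrightarrow \Hom_C(P_0 \otimes_C E, M)$. Invoking the identification $P_0 \otimes_C E \cong \tau_C^{-1}\Omega_C^{-1} P_0$ — the key tilted-algebra input, derived from $E = \Ext_C^2(DC, C)$ and the Nakayama / Auslander--Reiten machinery for $\mathop{\text{mod}} C$, and the same identification underlying Theorem 1.1 — the hypothesis $\Hom_C(\tau_C^{-1}\Omega_C^{-1} P_0, M) = 0$ kills the enclosing Hom group, so $\Ext_B^1(M, M) = 0$.

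Part (b) I would deduce from (a) by the duality $C \leftrightarrow C^{op}$: the class of tilted algebras is closed under $(-)^{op}$, $B^{op}$ is the relation extension of $C^{op}$, and the $k$-duality $D$ interchanges $\mathop{\text{mod}} C$ and $\mathop{\text{mod}} C^{op}$, swapping projective covers with injective envelopes and converting $\tau_C \Omega_C$ into $\tau_{C^{op}}^{-1}\Omega_{C^{op}}^{-1}$. Under these translations, hypothesis (b) for $M$ over $C$ becomes hypothesis (a) for $DM$ over $C^{op}$, and rigidity of $M$ over $B$ is equivalent to rigidity of $DM$ over $B^{op}$, so applying (a) to $DM$ over $C^{op}$ finishes (b). The main obstacle will be verifying the bridge $P_0 \otimes_C E \cong \tau_C^{-1}\Omega_C^{-1} P_0$, which is the substantive tilted-algebra fact powering everything; once that is secured, the rest is careful module-theoretic bookkeeping plus a clean duality argument.
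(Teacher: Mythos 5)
Your argument is correct, and the decomposition it rests on,
\[
\Ext_B^1(M,M)\;\cong\;\Ext_C^1(M,M)\;\oplus\;\Hom_C(M\otimes_CE,\,M),
\]
does hold: the $B$-projective cover of $M$ is $P_0\otimes_CB$ (Lemma \ref{My Projective Cover}), its kernel is $\Omega_CM\oplus(P_0\otimes_CE)$ as a $C$-module with exactly the twisted $E$-action you describe, and since $ME=0$ every $B$-morphism into $M$ factors through the quotient by the $E$-action, which splits off $M\otimes_CE$ by right-exactness of $-\otimes_CE$. The ``bridge'' you flag as the main obstacle, $P_0\otimes_CE\cong\tau_C^{-1}\Omega_C^{-1}P_0$, is not one: it is Proposition \ref{SS Prop 4.1}(c) of the paper (quoted from Schiffler--Serhiyenko) and holds for any $C$ of global dimension at most $2$, which is the generality in which the theorem is stated. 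The paper's own proof uses the same two short exact sequences but organizes them differently: via the snake lemma it produces the exact sequence $0\to\tau_C^{-1}\Omega_C^{-1}P_0\to\Omega_B^1M\to\Omega_C^1M\to0$ of $B$-modules, uses the hypothesis to make $\Hom_B(\Omega_C^1M,M)\to\Hom_B(\Omega_B^1M,M)$ an isomorphism, and then lifts an arbitrary $h\colon\Omega_B^1M\to M$ through $\Omega_C^1M$ and $P_0$ by an explicit diagram chase; it therefore only bounds $\Ext_B^1(M,M)$ from above. Your route buys strictly more: the exact formula gives the sharper statement that $M$ is rigid over $B$ if and only if it is rigid over $C$ and $\Hom_C(\tau_C^{-1}\Omega_C^{-1}M,M)=0$, an ``if and only if'' which the paper only approximates through Theorem \ref{Main Ext} together with the partial converse proved at the end of Section 4 (which needs the extra hypothesis $\Ext_B^1(P_0,M)=0$). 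Your treatment of (b) by passing to $C^{\mathrm{op}}$ and $DM$ is the same duality the paper invokes without detail; the only point worth spelling out there is that the relation-extension construction commutes with taking opposite algebras.
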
 
 As an immediate consequence, in the case $C$ is tilted, we obtain an affirmative answer to whether an indecomposable rigid $C$-module remains rigid as a $B$-module.
\begin{cor} 
Let C be a tilted algebra with B the corresponding cluster-titled algebra.  Suppose M is an indecomposable, rigid C-module.  Then M is a rigid B-module.
 \end{cor}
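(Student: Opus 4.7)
The strategy is to reduce the corollary to Theorem 1.2 by a case analysis on the homological dimension of $M$. The essential input is the classical structural result for tilted algebras: $\mathop{\text{mod}}C$ splits into a torsion-free part $\mathcal{Y}(T)$ and a torsion part $\mathcal{X}(T)$ determined by the tilting module $T$ used to realize $C$, with $\pd_C Y \leq 1$ for every $Y \in \mathcal{Y}(T)$ and $\id_C X \leq 1$ for every $X \in \mathcal{X}(T)$. In particular, every indecomposable $C$-module $M$ satisfies $\pd_C M \leq 1$ or $\id_C M \leq 1$, and I would treat the two (possibly overlapping) possibilities in turn.

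Suppose first that $\pd_C M \leq 1$. The plan is to verify the hypothesis of Theorem 1.2(a), namely $\Hom_C(\tau_C^{-1}\Omega_C^{-1}P_0, M) = 0$. From the defining sequence $0 \to P_0 \to I(P_0) \to \Omega_C^{-1}P_0 \to 0$ one obtains $\Ext_C^1(M, \Omega_C^{-1}P_0) \cong \Ext_C^2(M, P_0)$, which vanishes since $\pd_C M \leq 1$. The Auslander--Reiten formula then gives $\underline{\Hom}_C(\tau_C^{-1}\Omega_C^{-1}P_0, M) \cong D\Ext_C^1(M,\Omega_C^{-1}P_0) = 0$, so every morphism $\tau_C^{-1}\Omega_C^{-1}P_0 \to M$ factors through a projective, and such a map may be replaced by one factoring through the projective cover $P_0 \to M$. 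The remaining task is to rule out the existence of a lift $\tau_C^{-1}\Omega_C^{-1}P_0 \to P_0$ whose composition with $P_0 \to M$ is nonzero; this is where the indecomposability of $M$ and the position of $P_0$ and $\tau_C^{-1}\Omega_C^{-1}P_0$ in the Auslander--Reiten quiver of the tilted algebra enter essentially.

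In the symmetric case $\id_C M \leq 1$, the dual argument applied to $0 \to \Omega_C I_0 \to P(I_0) \to I_0 \to 0$ yields $\overline{\Hom}_C(M, \tau_C\Omega_C I_0) \cong D\Ext_C^2(I_0, M) = 0$, which verifies the hypothesis of Theorem 1.2(b) after an analogous cleanup from stable to honest Hom. The anticipated main obstacle is precisely this last step: the homological bookkeeping via the Auslander--Reiten formula is routine, whereas upgrading stable Hom vanishing to vanishing of the full Hom space $\Hom_C(\tau_C^{-1}\Omega_C^{-1}P_0, M)$ (and its dual) is where the indecomposability of $M$, together with the specific geometry of the module category of a tilted algebra, becomes indispensable.
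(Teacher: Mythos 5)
Your overall architecture matches the paper's: use Proposition \ref{Tilting Prop}(b) to get $\pd_CM\leq1$ or $\id_CM\leq1$ for an indecomposable rigid $M$, then feed the two cases into parts (a) and (b) of Theorem \ref{Main Ext}. The paper does exactly this, routing through Corollary \ref{Tilt Ext} and Lemma \ref{Homological Result}. However, your write-up stops short at the one step that carries all the content, and it points in the wrong direction for how to finish it. The Auslander--Reiten formula gives $\Ext_C^1(M,\Omega_C^{-1}P_0)\cong D\underline{\Hom}_C(\tau_C^{-1}\Omega_C^{-1}P_0,M)$, so from $\pd_CM\leq1$ you only conclude that every map $\tau_C^{-1}\Omega_C^{-1}P_0\to M$ factors through a projective; you then leave open the task of ruling out a nonzero lift $\tau_C^{-1}\Omega_C^{-1}P_0\to P_0$, asserting that the indecomposability of $M$ and the position of these modules in the Auslander--Reiten quiver of the tilted algebra ``enter essentially'' there. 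They do not, and as written this is a genuine gap: no argument is supplied for the step on which the whole proof rests.

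The correct closure is elementary and uses only $\mathop{\text{gl.dim}}C\leq 2$: from the cosyzygy sequence $0\to P_0\to I(P_0)\to\Omega_C^{-1}P_0\to 0$ one gets $\id_C(\Omega_C^{-1}P_0)\leq1$, whence Lemma \ref{Blue 115}(b) yields $\Hom_C(\tau_C^{-1}\Omega_C^{-1}P_0,C)=0$. Thus every map from $\tau_C^{-1}\Omega_C^{-1}P_0$ to a projective is already zero, so $\underline{\Hom}_C(\tau_C^{-1}\Omega_C^{-1}P_0,M)=\Hom_C(\tau_C^{-1}\Omega_C^{-1}P_0,M)$ and the required vanishing follows. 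This is exactly the content of Lemma \ref{Homological Result}(a) applied to the summand $P_0$ of $C$, and it requires no indecomposability hypothesis --- which is why the paper can first prove the stronger Corollary \ref{Tilt Ext} (any partial tilting or partial cotilting module remains rigid over $B$) and deduce the present corollary from it; indecomposability enters only to guarantee that one of the two homological conditions holds. The dual case $\id_CM\leq1$ in your proposal has the identical gap and the identical fix.
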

 
\section{Notation and Preliminaries}
We now set the notation for the remainder of this paper. All algebras are assumed to be finite dimensional over an algebraically closed field $k$.  Suppose $Q=(Q_0,Q_1)$ is a connected quiver without oriented cycles where $Q_0$ denotes a finite set of vertices and $Q_1$ denotes a finite set of oriented arrows.  By $kQ$ we denote the path algebra of $Q$.  If $\Lambda$ is a $k$-algebra then denote by $\mathop{\text{mod}}\Lambda$ the category of finitely generated right $\Lambda$-modules and by $\mathop{\text{ind}}\Lambda$ a set of representatives of each isomorphism class of indecomposable right $\Lambda$-modules.  Given $M\in\mathop{\text{mod}}\Lambda$, the projective dimension of $M$ in $\mathop{\text{mod}}\Lambda$ is denoted $\pd_{\Lambda}M$ and its injective dimension by $\id_{\Lambda}M$.  We denote by $\add M$ the smallest additive full subcategory of $\mathop{\text{mod}}\Lambda$ containing $M$, that is, the full subcategory of $\mathop{\text{mod}}\Lambda$ whose objects are the direct sums of direct summands of the module $M$.  As mentioned before, we let $\tau_{\Lambda}$ and $\tau^{-1}_{\Lambda}$ be the Auslander-Reiten translations in mod$\Lambda$.  We let $D$ be the standard duality functor $\Hom_k(-,k)$.  Also mentioned before, $\Omega M$ and $\Omega^{-1}M$ will denote the first syzygy and first cosyzygy of $M$.  Finally,  let gl.dim stand for the global dimension of an algebra.  
 
 \subsection{Tilted Algebras}
  Tilting theory is one of the main themes in the study of the representation theory of algebras.  Given a $k$-algebra $A$, one can construct a new algebra $B$ in such a way that the corresponding module categories are closely related.  The main idea is that of a tilting module.
   \begin{mydef} Let $A$ be an algebra.  An $A$-module $T$ is a $\emph{partial tilting module}$ if the following two conditions are satisfied: 
   \begin{enumerate}
   \item[($\text{1}$)] $\pd_AT\leq1$.
   \item[($\text{2}$)] $\Ext_A^1(T,T)=0$.
   \end{enumerate}
   A partial tilting module T is called a $\emph{tilting module}$ if it also satisfies the following additional condition:
   \begin{enumerate}
   \item[($\text{3}$)] There exists a short exact sequence $0\rightarrow A\rightarrow T'\rightarrow T''\rightarrow 0$ in $\mathop{\text{mod}}A$ with $T'$ and $T''$ $\in \add T$.
   \end{enumerate}
   \end{mydef}
Partial tilting modules induce torsion pairs in a natural way.  We consider the restriction to a subcategory $\mathcal{C}$ of a functor $F$ defined originally on a module category, and we denote it by $F|_{\mathcal{C}}$.  Also, let $S$ be a subcategory of a category $C$.  We say $S$ is a $full$ $subcategory$ of $C$ if, for each pair of objects $X$ and $Y$ of $S$, $\Hom_S(X,Y)=\Hom_C(X,Y)$. 
   \begin{mydef} A pair of full subcategories $(\mathcal{T},\mathcal{F})$ of $\mathop{\text{mod}}A$ is called a $\emph{torsion pair}$ if the following conditions are satisfied:
   \begin{enumerate}
   \item[(a)] $\text{Hom}_A(M,N)=0$ for all $M\in\mathcal{T}$, $N\in\mathcal{F}.$
   \item[(b)] $\text{Hom}_A(M,-)|_\mathcal{F}=0$ implies $M\in\mathcal{T}.$
   \item[(c)] $\text{Hom}_A(-,N)|_\mathcal{T}=0$ implies $N\in\mathcal{F}.$
   \end{enumerate}
   \end{mydef}
   Consider the following full subcategories of $\mathop{\text{mod}}A$ where $T$ is a partial tilting module.
 \[
 \mathcal{T}(T)=\{M\in\mathop{\text{mod}}A~|~ \text{Ext}_A^{1}(T,M)=0\}
 \]
 \[
 \mathcal{F}(T)=\{M\in\mathop{\text{mod}}A~|~\text{Hom}_A(T,M)=0\}
 \]
 Then $(\mathcal{T}(T),\mathcal{F}(T))$ is a torsion pair in $\mathop{\text{mod}}A$ called the $\it{induced~torsion~pair}$ of $T$.  Considering the endomorphism algebra $C=\End_AT$, there is an induced torsion pair, $(\mathcal{X}(T),\mathcal{Y}(T))$, in $\mathop{\text{mod}}C$.
 \[
 \mathcal{X}(T)=\{M\in\mathop{\text{mod}}B~|~M\otimes_CT=0\}
 \]
 \[
 \mathcal{Y}(T)=\{M\in\mathop{\text{mod}}B~|~\text{Tor}_1^C(M,T)=0\}
 \]
 
 We now state the definition of a tilted algebra.
 \begin{mydef} Let $A$ be a hereditary algebra with $T$ a tilting $A$-module.  Then the algebra $C=\End_AT$ is called a $\emph{tilted algebra}$.
 \end{mydef}
 
 The following proposition describes several facts about tilted algebras.  Let $A$ be an algebra and $M$, $N$ be two indecomposable $A$-modules.  A $\it{path}$ in $\mathop{\text{mod}}A$ from $M$ to $N$ is a sequence 
 \[
 M=M_0\xrightarrow{f_1}M_1\xrightarrow{f_2}M_2\rightarrow\dots\xrightarrow{f_s}M_s=N
 \]
 where $s\geq0$, all the $M_i$ are indecomposable, and all the $f_i$ are nonzero nonisomorphisms.  In this case, $M$ is called a $\it{predecessor}$ of $N$ in $\mathop{\text{mod}}A$ and $N$ is called a $\it{successor}$ of $M$ in $\mathop{\text{mod}}A$.
 \begin{prop}$\emph{\cite[VIII,~Lemma~3.2.]{ASS}}$.
 \label{Tilting Prop}
 Let $A$ be a hereditary algebra, $T$ a tilting $A$-module, and $C=\End_AT$ the corresponding tilted algebra.  Then
 \begin{enumerate}
 \item[$\emph{(a)}$] $\mathop{\emph{gl.dim}}C\leq2$.
 \item[$\emph{(b)}$] For all $\text{M}\in\ind C$, $\id_CM\leq1$ or $\pd_CM\leq1$.
 \item[$\emph{(c)}$] For all $\text{M}\in\mathcal{X}(T)$, $\id_CM\leq1$.
 \item[$\emph{(d)}$] For all $\text{M}\in\mathcal{Y}(T)$, $\pd_CM\leq1$.
 \item[$\emph{(e)}$] $(\mathcal{X}(T),\mathcal{Y}(T))$ is splitting, which means that every indecomposable C-module belongs to either $\mathcal{X}(T)$ or $\mathcal{Y}(T)$.
 \item[$\emph{(f)}$] $\mathcal{Y}(T)$ is closed under predecessors and $\mathcal{X}(T)$ is closed under successors.
 \end{enumerate}
 \end{prop}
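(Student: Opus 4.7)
The argument rests on the Brenner--Butler tilting theorem, which provides quasi-inverse category equivalences $\Hom_A(T,-)|_{\mathcal{T}(T)}:\mathcal{T}(T)\to\mathcal{Y}(T)$ and $\Ext_A^1(T,-)|_{\mathcal{F}(T)}:\mathcal{F}(T)\to\mathcal{X}(T)$, and identifies $\mathcal{X}(T)$ as the torsion class and $\mathcal{Y}(T)$ as the torsion-free class of a torsion pair in $\mathop{\text{mod}}C$. I would prove the statements in the order (d), (c), (e), then (a), (b), (f).

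For (d), given $M\in\mathcal{Y}(T)$ I would pick $X\in\mathcal{T}(T)$ with $M\cong\Hom_A(T,X)$. Standard tilting theory in the hereditary setting furnishes a short exact sequence $0\to T_1\to T_0\to X\to 0$ in $\mathop{\text{mod}}A$ with $T_0,T_1\in\add T$. Applying $\Hom_A(T,-)$ and using $\Ext_A^1(T,X)=0$ keeps the sequence exact, and each $\Hom_A(T,T_i)$ is projective in $\mathop{\text{mod}}C$ since $C=\End_A T$. This gives $\pd_C M\leq 1$. Part (c) then follows by passing to $A^{\text{op}}$ and the tilting $A^{\text{op}}$-module $DT$ (whose endomorphism algebra is $C^{\text{op}}$), and dualizing the conclusion of (d) to an injective coresolution over $C$.

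For (e), let $N\in\mathop{\text{mod}}C$ be indecomposable with canonical torsion sequence $0\to X\to N\to Y\to 0$ where $X\in\mathcal{X}(T)$ and $Y\in\mathcal{Y}(T)$; I claim $\Ext_C^1(Y,X)=0$, which forces the sequence to split and hence, by indecomposability of $N$, $X=0$ or $Y=0$. Writing $Y=\Hom_A(T,X')$ and $X=\Ext_A^1(T,Y')$ with $X'\in\mathcal{T}(T)$, $Y'\in\mathcal{F}(T)$, the Brenner--Butler theorem supplies a natural isomorphism $\Ext_C^1(\Hom_A(T,X'),\Ext_A^1(T,Y'))\cong\Hom_A(X',Y')$, and the right-hand side vanishes by the torsion-pair axiom for $(\mathcal{T}(T),\mathcal{F}(T))$ in $\mathop{\text{mod}}A$.

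The remaining parts follow quickly. For (a), the classical bound $\text{gl.dim}\End_A T\leq\text{gl.dim}A+\pd_A T\leq 1+1=2$ applies; alternatively I would reduce $\Ext_C^i(M,N)$ for $i\geq 3$ through the two equivalences to an $\Ext_A^j$ of total degree at least $2$, which vanishes because $A$ is hereditary. Part (b) is immediate: an indecomposable $M$ lies in $\mathcal{X}(T)$ or $\mathcal{Y}(T)$ by (e), giving $\id_C M\leq 1$ or $\pd_C M\leq 1$ by (c) or (d). For (f), any path $M\to\cdots\to N$ with $N\in\mathcal{Y}(T)$ reduces by induction on length to a single nonzero nonisomorphism $M'\to N$ with $M'$ indecomposable; (e) puts $M'$ in $\mathcal{X}(T)$ or $\mathcal{Y}(T)$, and the first case contradicts $\Hom_C(\mathcal{X}(T),\mathcal{Y}(T))=0$. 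The argument for $\mathcal{X}(T)$ under successors is dual. The main technical obstacle is the Ext-identification in (e): matching $\Ext_C^1$ across the two halves of the tilting decomposition requires careful homological bookkeeping, since one side uses $\Hom_A(T,-)$ and the other uses $\Ext_A^1(T,-)$.
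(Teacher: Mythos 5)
The paper gives no proof of this proposition---it is quoted verbatim from [ASS, VIII, Lemma~3.2]---so your argument can only be judged on its own terms. Most of it is sound and follows the standard route: (d) via an $\add T$-resolution of $X\in\mathcal{T}(T)$ (which does exist for hereditary $A$: the kernel $K$ of the universal map $T^d\to X$ satisfies $\Ext_A^1(T\oplus K,T\oplus K)=0$, so $T\oplus K$ is partial tilting containing the tilting module $T$ and hence $K\in\add T$), (c) by duality, (a) from the classical global dimension bound, and (b), (f) as formal consequences of (c), (d), (e) together with the torsion-pair axiom $\Hom_C(\mathcal{X}(T),\mathcal{Y}(T))=0$.

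The genuine error is the Ext-identification in (e). For $X'\in\mathcal{T}(T)$ and $Y'\in\mathcal{F}(T)$ the Brenner--Butler/Happel correspondence gives
\[
\Ext_C^i\bigl(\Hom_A(T,X'),\,\Ext_A^1(T,Y')\bigr)\;\cong\;\Ext_A^{i+1}(X',Y'),
\]
so the group you must kill is isomorphic to $\Ext_A^2(X',Y')$, not to $\Hom_A(X',Y')$. You can see this from your own resolution in (d): applying $\Hom_C(-,\Ext_A^1(T,Y'))$ to $0\to\Hom_A(T,T_1)\to\Hom_A(T,T_0)\to\Hom_A(T,X')\to 0$ and using the degree-zero connecting isomorphism $\Hom_C(\Hom_A(T,U),\Ext_A^1(T,Y'))\cong\Ext_A^1(U,Y')$ for $U\in\add T$ identifies $\Ext_C^1(\Hom_A(T,X'),\Ext_A^1(T,Y'))$ with the cokernel of $\Ext_A^1(T_0,Y')\to\Ext_A^1(T_1,Y')$, which is $\Ext_A^2(X',Y')$. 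The conclusion $\Ext_C^1(Y,X)=0$ is still correct, but it holds because $A$ is hereditary, not because $\Hom_A(\mathcal{T}(T),\mathcal{F}(T))=0$. The distinction is substantive: if your isomorphism were right, every tilting module over every algebra would induce a splitting torsion pair, which is false---the splitting in (e) is precisely where the hereditary hypothesis enters.
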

 
 \subsection{Cluster categories and cluster-tilted algebras} Let $A=kQ$ and let $\mathcal{D}^b(\mathop{\text{mod}}A)$ denote the derived category of bounded complexes of $A$-modules as summarized in $\cite{BMMRRT}$.  The $cluster$ $category$ $\mathcal{C}_A$ is defined as the orbit category of the derived category with respect to the functor $\tau_\mathcal{D}^{-1}[1]$, where $\tau_\mathcal{D}$ is the Auslander-Reiten translation in the derived category and $[1]$ is the shift.  Cluster categories were introduced in $\cite{BMMRRT}$, and in $\cite{CCS}$ for type $\mathbb{A}$, and were further studied in $\cite{A,K,KR,P}$.  They are triangulated categories $\cite{K}$, that are 2-Calabi Yau and have Serre duality $\cite{BMMRRT}$.
 
 \par
 An object $T$ in $\mathcal{C}_A$ is called $cluster$-$tilting$ if $\text{Ext}_{\mathcal{C}_A}^1(T,T)=0$ and $T$ has $|Q_0|$ non-isomorphic indecomposable direct summands.  The endomorphism algebra $\End_{\mathcal{C}_A}T$ of a cluster-tilting object is called a $cluster$-$tilted$ $algebra$ $\cite{BMR}$.
 \par
 The following theorem was shown in $\cite{KR}$.  It characterizes the homological dimensions of a cluster-tilted algebra.
 \begin{theorem}$\emph{\cite{KR}}$.
 \label{Gorenstein}
   Cluster-tilted algebras are 1-Gorenstein, that is, every projective module has injective dimension at most 1 and every injective module has projective dimension at most 1.
 \end{theorem}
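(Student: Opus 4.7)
The plan is to work inside the ambient cluster category $\mathcal{C}=\mathcal{C}_A$ and to exploit both its 2-Calabi-Yau property and the functor $F=\Hom_{\mathcal{C}}(T,-)\colon\mathcal{C}\to\mathop{\text{mod}}B$, where $T=\bigoplus T_i$ is a basic cluster-tilting object with $B=\End_{\mathcal{C}}(T)$. By the main result of \cite{BMR}, $F$ induces an equivalence $\mathcal{C}/\add(T[1])\simeq\mathop{\text{mod}}B$ and sends each indecomposable summand $T_i$ to the indecomposable projective $P_i$; via this equivalence every $B$-module has the form $FX$ for some $X\in\mathcal{C}$.

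The first step is to build a canonical projective resolution of $FX$ of controlled length. I would choose a minimal right $\add T$-approximation $T^0\to X$ and complete it to a triangle $T^1\to T^0\to X\to T^1[1]$ in $\mathcal{C}$; the cluster-tilting hypothesis $\Ext^1_{\mathcal{C}}(T,T)=0$ forces $T^1\in\add T$, so applying $F$ yields the exact tail $F(T^1)\to F(T^0)\to FX\to 0$ with both leading terms projective in $\mathop{\text{mod}}B$. Iterating the construction on $T^1[1]$ (via a new right $\add T$-approximation) produces a further term $F(T^2)$ of the resolution, giving enough length to compute $\Ext^2_B(FX,-)$.

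Next, for a fixed indecomposable projective $P_i=FT_i$, I would apply $\Hom_B(-,P_i)$ to the resolution and identify its second cohomology, via the equivalence, with a quotient of $\Hom_{\mathcal{C}}(X,T_i[2])$ modulo maps factoring through $\add T[1]$. Then the 2-Calabi-Yau duality
\[ D\Hom_{\mathcal{C}}(X,Y[1])\cong\Hom_{\mathcal{C}}(Y,X[1]) \]
lets me rewrite the relevant group in terms of $\Ext^1_{\mathcal{C}}(T_i,T^0\oplus T^1)$, which vanishes because $T_i,T^0,T^1\in\add T$ and $\Ext^1_{\mathcal{C}}(T,T)=0$. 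This yields $\Ext^2_B(FX,P_i)=0$ for every $X$, i.e., $\id_B P_i\leq 1$. The dual assertion $\pd_B I\leq 1$ for every indecomposable injective $I\in\mathop{\text{mod}}B$ follows by a symmetric argument using minimal left $\add T$-approximations, or equivalently by applying the whole reasoning to the opposite algebra $B^{\text{op}}$, which is itself cluster-tilted coming from the cluster-tilting object in the cluster category of $A^{\text{op}}$.

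The chief obstacle is the careful translation between triangles in $\mathcal{C}$ and exact sequences in $\mathop{\text{mod}}B$: although $F$ descends to an additive equivalence modulo $\add T[1]$, it is neither triangulated nor exact on $\mathcal{C}$ itself, so identifying the $\Ext^2_B$ group with cycles and boundaries computed in $\mathcal{C}$ requires bookkeeping of which maps factor through $\add T[1]$. It is precisely here that the cluster-tilting vanishing $\Ext^1_{\mathcal{C}}(T,T)=0$ plays its essential role, by cancelling the contributions that would otherwise obstruct the passage to a length-one injective resolution of $P_i$ and, dually, to a length-one projective resolution of $I$.
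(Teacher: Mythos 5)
This theorem is quoted from Keller--Reiten \cite{KR} and the paper gives no proof of its own, so the only meaningful comparison is with the argument in the cited source. Your framework is the same as theirs (the $2$-Calabi--Yau cluster category, the equivalence $\mathcal{C}/\add (T[1])\simeq \mathop{\text{mod}}B$, approximation triangles giving projective presentations), but the route you take through it has a genuine gap at its central step. You reduce $\id_BP_i\leq 1$ to the vanishing of $\Ext_B^2(FX,P_i)$ for all $X$, and the entire content of that vanishing rests on the asserted identification of the second cohomology of $\Hom_B(F(T^\bullet),P_i)$ with ``a quotient of $\Hom_{\mathcal{C}}(X,T_i[2])$ modulo maps factoring through $\add T[1]$,'' followed by an unexplained passage to $\Ext^1_{\mathcal{C}}(T_i,T^0\oplus T^1)$. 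Neither step is justified, and the first looks wrong as stated: by the $2$-Calabi--Yau property $\Hom_{\mathcal{C}}(X,T_i[2])\cong D\Hom_{\mathcal{C}}(T_i,X)$, which is (the dual of) the value of the module $FX$ at the vertex $i$ and certainly does not vanish in general, while no nonzero map $X\to T_i[2]$ factors through $\add T[1]$ (any such factorization passes through $\Hom_{\mathcal{C}}(T',T_i[1])=\Ext^1_{\mathcal{C}}(T',T_i)=0$), so the proposed quotient is the whole group. Moreover, extending the resolution ``by iterating on $T^1[1]$'' is not the right recipe: the second syzygy of $FX$ is the image of $F(X[-1])$ in $F(T^1)$ coming from the rotated triangle $X[-1]\to T^1\to T^0\to X$, so the next approximation must be taken of $X[-1]$ (equivalently of the cocone of $T^1\to T^0$), and exactness at $F(T^1)$ requires a separate check. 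You flag this ``bookkeeping'' as the chief obstacle; it is precisely the part that is missing and it is where the proof lives.

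The argument in \cite{KR} sidesteps all of this by resolving the injectives directly rather than computing $\Ext^2$ out of arbitrary modules. Serre duality gives $\Hom_{\mathcal{C}}(T,T[2])\cong D\Hom_{\mathcal{C}}(T,T)=DB$, so the indecomposable injective $B$-modules are exactly the $F(T_i[2])$. Applying your own first step to $X=T_i[2]$ yields a triangle $T^1\to T^0\to T_i[2]\to T^1[1]$ with $T^0,T^1\in\add T$, and applying $F=\Hom_{\mathcal{C}}(T,-)$ to its rotation gives the exact sequence
\[
\Ext^1_{\mathcal{C}}(T,T_i)\rightarrow F(T^1)\rightarrow F(T^0)\rightarrow F(T_i[2])\rightarrow \Ext^1_{\mathcal{C}}(T,T^1),
\]
whose two outer terms vanish because $T_i,T^1\in\add T$. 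Hence $0\rightarrow F(T^1)\rightarrow F(T^0)\rightarrow I_i\rightarrow 0$ is already a projective resolution and $\pd_BI_i\leq 1$; the statement for projectives follows by the dual argument with left approximations, as you indicate. If you wish to keep your $\Ext^2$-strategy, you should reduce to this special case rather than attempt the identification for general $X$.
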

 As an important consequence, the projective dimension and the injective dimension of any module in a cluster-tilted algebra are simultaneously either infinite, or less than or equal to 1 (see $\cite[\text{Section~2.1}]{KR}$).

 \subsection{Relation Extensions}  Let $C$ be an algebra of global dimension at most 2 and let $E$ be the $C$-$C$-bimodule $E=\text{Ext}_C^2(DC,C)$.  
 \begin{mydef} The $\emph{relation extension}$ of $C$ is the trivial extension $B=C\ltimes E$, whose underlying $C$-module structure is $C\oplus E$, and multiplication is given by $(c,e)(c^{\prime},e^{\prime})=(cc^{\prime},ce^{\prime}+ec^{\prime})$.
 \end{mydef}  
 
 Relation extensions were introduced in $\cite{ABS}$.  In the special case where $C$ is a tilted algebra, we have the following result.
 \begin{theorem}$\emph{\cite{ABS}}$.  Let C be a tilted algebra.  Then $B=C\ltimes\emph{Ext}_C^2(DC,C)$ is a cluster-titled algebra.  Moreover all cluster-tilted algebras are of this form.
 \end{theorem}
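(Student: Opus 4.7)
The plan is to prove both halves through a computation inside the cluster category $\mathcal{C}_A$. By definition of tilted algebras, realize $C=\End_A T$ with $A=kQ$ hereditary and $T$ a basic tilting $A$-module. The candidate cluster-tilting object is the image $\tilde T$ of $T$ under the canonical functor $\mathcal{D}^b(\mathop{\text{mod}}A)\to\mathcal{C}_A$. I would then (i) verify $\tilde T$ is cluster-tilting, (ii) compute $\End_{\mathcal{C}_A}(\tilde T)$, (iii) identify it with $C\ltimes\Ext_C^2(DC,C)$ as algebras, and (iv) prove every cluster-tilting object in a cluster category admits such a tilted representative.

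For step (i), apply the orbit-category Hom formula
\[
\Hom_{\mathcal{C}_A}(\tilde X,\tilde Y)\;=\;\bigoplus_{i\in\mathbb{Z}}\Hom_{\mathcal{D}^b(A)}\bigl(X,(\tau_\mathcal{D}^{-1}[1])^i Y\bigr).
\]
With $X=Y=T$ and the shift target replaced by $[1]$ this computes $\Ext^1_{\mathcal{C}_A}(\tilde T,\tilde T)$; because $A$ is hereditary every summand with $|i|\ge 2$ vanishes, the $i=0$ summand is $\Ext_A^1(T,T)=0$ since $T$ is tilting, and the remaining summand vanishes after identification by 2-Calabi-Yau duality. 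That $\tilde T$ has $|Q_0|$ nonisomorphic indecomposable summands follows from the corresponding count for $T$, together with the fact that the summands of $T$ lie in distinct $\tau_\mathcal{D}^{-1}[1]$-orbits (visible on cohomology).

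For steps (ii)--(iii), the same Hom formula (no shift) gives
\[
\End_{\mathcal{C}_A}(\tilde T)\;=\;\End_A(T)\;\oplus\;\Hom_{\mathcal{D}^b(A)}\bigl(T,\tau_\mathcal{D}^{-1}[1]T\bigr)\;=\;C\;\oplus\;\Ext_A^1(T,\tau^{-1}T),
\]
with all other summands zero by heredity of $A$. Now transport the second summand through the derived tilting equivalence $F=R\Hom_A(T,-):\mathcal{D}^b(\mathop{\text{mod}}A)\to\mathcal{D}^b(\mathop{\text{mod}}C)$, which sends $T\mapsto C$ and intertwines the Serre functors $\nu_A=\tau[1]$ and $\nu_C=-\otimes_C^L DC$. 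Rewriting $\Ext_A^1(T,\tau^{-1}T)=\Hom_{\mathcal{D}^b(A)}(T,\nu_A^{-1}T[2])$ and applying $F$ yields
\[
\Hom_{\mathcal{D}^b(C)}(C,\nu_C^{-1}C[2])\;=\;H^2\bigl(R\Hom_C(DC,C)\bigr)\;=\;\Ext_C^2(DC,C),
\]
using $\mathop{\text{gl.dim}} C\le 2$ (Proposition \ref{Tilting Prop}(a)) to concentrate the relevant cohomology in degree two. The multiplication on the extension summand is zero because the composition of two morphisms $T\to\tau_\mathcal{D}^{-1}[1]T$ lies in $\Hom_{\mathcal{D}^b(A)}(T,(\tau_\mathcal{D}^{-1}[1])^2 T)=0$, so the product respects the trivial-extension formula $(c,e)(c',e')=(cc',ce'+ec')$, identifying $\End_{\mathcal{C}_A}(\tilde T)$ with $B=C\ltimes\Ext_C^2(DC,C)$.

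For step (iv), any cluster-tilted algebra is by definition $\End_{\mathcal{C}_A}\tilde T$ for some cluster-tilting object $\tilde T$. Choose a preimage in $\mathcal{D}^b(\mathop{\text{mod}}A)$ whose indecomposable summands all lie in $\mathop{\text{mod}}A$ (possibly after replacing $A$ by a derived-equivalent hereditary algebra via APR-tilting, using that the $\tau_\mathcal{D}^{-1}[1]$-orbit of each summand meets a heart of a tilted $t$-structure in the module category); translate the rigidity of $\tilde T$ back through the Hom formula to conclude $\Ext^1_A(T,T)=0$, and then verify the tilting condition (3) from the Bongartz completion / correct count of summands. Steps (ii)--(iii) then apply and produce the tilted $C$ with $B\cong C\ltimes\Ext_C^2(DC,C)$. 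The main obstacle is the identification $\Ext_A^1(T,\tau^{-1}T)\cong\Ext_C^2(DC,C)$ as $C$-$C$-bimodules: one must verify that $F$ transports the natural $\End_A(T)$-bimodule structure on the left to the $C$-bimodule structure on the right, which requires tracking functoriality of the equivalence carefully and using that the Serre functor commutes with $F$ up to canonical isomorphism.
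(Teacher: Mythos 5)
This theorem is quoted from \cite{ABS} and the paper offers no proof of it, so there is nothing internal to compare against; I can only judge your proposal on its own terms. What you have written is, in outline, the original argument of Assem--Br\"ustle--Schiffler: decompose $\End_{\mathcal{C}_A}(\tilde T)$ via the orbit-category Hom formula into $C\oplus\Ext_A^1(T,\tau^{-1}T)$, observe that the square of the degree-one part vanishes so the algebra is a trivial extension, and identify the bimodule with $\Ext_C^2(DC,C)$ through the derived equivalence $R\Hom_A(T,-)$ and the compatibility of Serre functors. That outline is correct, and you rightly flag the bimodule identification as the place where the real work sits. Two caveats. First, step (iv) is not self-contained: the fact that every cluster-tilting object in $\mathcal{C}_A$ is induced by a tilting module over \emph{some} hereditary algebra derived equivalent to $A$ is a substantive theorem of Buan--Marsh--Reineke--Reiten--Todorov \cite{BMMRRT}, and your parenthetical sketch (moving each summand's orbit into a heart) does not by itself put all summands simultaneously into one module category; you should cite or reprove that result rather than gesture at APR-tilts. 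Second, in verifying $\Ext^1_{\mathcal{C}_A}(\tilde T,\tilde T)=0$ the $i=-1$ summand is $\Hom_A(T,\tau T)\cong D\Ext^1_A(T,T)$ by the Auslander--Reiten formula --- this is cleaner than invoking the 2-Calabi--Yau property of $\mathcal{C}_A$, which at that point in the argument is external input. With those two points supplied, the proof is complete and is essentially the one in the literature.
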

 
  \subsection{Induction and coinduction functors}  A fruitful way to study cluster-tilted algebras is via induction and coinduction functors.  Recall, $D$ denotes the standard duality functor.
 
 \begin{mydef}
 Let $C$ be a subalgebra of $B$ such that $1_C=1_B$, then
 \[
 -\otimes_CB:\mathop{\text{mod}}C\rightarrow\mathop{\text{mod}}B
 \]
 is called the $\emph{induction functor}$, and dually
 \[
 D(B\otimes_CD-):\mathop{\text{mod}}C\rightarrow\mathop{\text{mod}}B
 \]
 is called the $\emph{coinduction functor}$.  Moreover, given $M\in\mathop{\text{mod}}C$, the corresponding induced module is defined to be $M\otimes_CB$, and the coinduced module is defined to be $D(B\otimes_CDM)$.  
 \end{mydef}
 We can say more in the situation when $B$ is a split extension of $C$.  Call a $C$-$C$-bimodule $E$ $\it{nilpotent}$ if, for $n\geq0$, $E\otimes_CE\otimes_C\dots\otimes_CE=0$, where the tensor product is performed $n$ times.
 
 \begin{mydef}
 
 Let $B$ and $C$ be two algebras.  We say $B$ is a $\emph{split extension}$ of $C$ by a nilpotent bimodule $E$ if there exists a short exact sequence of $B$-modules
 \[
 0\rightarrow E\rightarrow B\mathop{\rightleftarrows}^{\mathrm{\pi}}_{\mathrm{\sigma}} C\rightarrow 0
\]
where $\pi$ and $\sigma$ are algebra morphisms, such that $\pi\circ\sigma=1_C$, and $E=\ker\pi$ is nilpotent.  
\end{mydef}
In particular, relation extensions are split extensions.  The next proposition shows a precise relationship between a given $C$-module and its image under the induction and coinduction functors.
\begin{prop}$\emph{\cite[Proposition~3.6]{SS}}$.  
\label{SS, Proposition 3.6}
Suppose B is a split extension of C by a nilpotent bi-module E.  Then, for every $M\in\mathop{\emph{mod}}C$, there exists two short exact sequences of B-modules:
\begin{enumerate}
\item[$\emph{(a)}$] $0\rightarrow M\otimes_CE\rightarrow M\otimes_CB\rightarrow M\rightarrow 0$
\item[$\emph{(b)}$] $0\rightarrow M\rightarrow D(B\otimes_CDM)\rightarrow D(E\otimes_CDM)\rightarrow 0$
\end{enumerate}

\end{prop}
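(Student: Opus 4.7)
The plan is to derive both sequences from the defining short exact sequence of the split extension, namely
\[
0 \to E \to B \xrightarrow{\pi} C \to 0,
\]
viewed as a sequence of $C$-$C$-bimodules. The key observation is that, because $\sigma : C \to B$ splits $\pi$ as algebra maps, this sequence is split as a sequence of one-sided $C$-modules (on each side separately), so applying any additive functor preserves exactness and no Tor-vanishing hypothesis is required.

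For (a), I would apply the right-exact functor $M \otimes_C -$. The splitting on the left-$C$-module side makes the resulting sequence exact:
\[
0 \to M \otimes_C E \to M \otimes_C B \to M \otimes_C C \to 0,
\]
and the identification $M \otimes_C C \cong M$ delivers the asserted shape. The main task is then to install the correct right-$B$-module structures and verify the maps are $B$-linear. $M\otimes_C B$ is a right $B$-module via multiplication in the second factor; $M\otimes_C E$ inherits a right $B$-action because $E$ is a two-sided ideal of $B$, so the right $B$-action on $E$ stays inside $E$; and $M$ acquires a right $B$-module structure through $\pi$, giving $m\cdot b = m\pi(b)$. A quick check then shows that the inclusion $M \otimes_C E \hookrightarrow M \otimes_C B$ and the map $m \otimes b \mapsto m\pi(b)$ are $B$-linear.

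For (b), the plan is dual: apply $-\otimes_C DM$ to the same split sequence (using the splitting as \emph{left} $C$-modules this time) to produce a short exact sequence of \emph{left} $B$-modules
\[
0 \to E \otimes_C DM \to B \otimes_C DM \to DM \to 0,
\]
and then apply the exact contravariant duality $D$. Since $D$ reverses arrows and $D(DM) \cong M$ naturally, this yields
\[
0 \to M \to D(B \otimes_C DM) \to D(E \otimes_C DM) \to 0
\]
as right $B$-modules, which is precisely the coinduction sequence asserted.

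The main obstacle I expect is purely bookkeeping: keeping track of which side of $B$, $C$ or $E$ carries which action, and confirming that the obvious maps really are $B$-linear rather than merely $C$-linear or $k$-linear. No deep homological input is needed because the starting sequence is one-sidedly split; in particular, the nilpotence of $E$ plays no role here (it will be needed only for later finiteness or iteration arguments), so the proof is essentially formal once the bimodule structures are set up correctly.
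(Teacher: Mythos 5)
The paper does not prove this proposition; it is quoted verbatim from Schiffler--Serhiyenko \cite[Proposition~3.6]{SS}, so there is no internal proof to compare against. Your argument is correct and is the standard one: the defining sequence $0\to E\to B\to C\to 0$ is split as a one-sided $C$-module sequence via $\sigma$, so tensoring (and then dualizing, for the coinduced case) preserves exactness, and the remaining work is only the verification of the right $B$-module structures, which you handle correctly (including the observation that nilpotence of $E$ is not needed here).
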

 The next two results give information on the projective cover and the minimal projective presentation of an induced module.
 \begin{lemma}$\emph{\cite[Lemma~1.3]{AM}}$.  
 \label{projective cover}
 Suppose B is a split extension of C by a nilpotent bimodule E.  Let $M$ be a $C$-module.  If $f\!:P\rightarrow M$ is a projective cover in $\mathop{\emph{mod}}C$, then $f\otimes_C1_B\!:P\otimes_CB\rightarrow M\otimes_CB$ is a projective cover in $\mathop{\emph{mod}}B.$
 \end{lemma}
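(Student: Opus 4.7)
The plan is to verify the three defining properties of a projective cover for the $B$-module map $f\otimes_C 1_B$: first, that $P\otimes_C B$ is projective in $\mathop{\text{mod}}B$; second, that $f\otimes_C 1_B$ is surjective; and third, that $\ker(f\otimes_C 1_B)$ is superfluous in $P\otimes_C B$, which I would verify by showing $f\otimes_C 1_B$ induces an isomorphism on tops.

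For the first point, since $P$ is a projective $C$-module, it is a direct summand of some free module $C^n$, and hence $P\otimes_C B$ is a direct summand of $C^n\otimes_C B\cong B^n$, so it is projective over $B$. The second point is immediate from right-exactness of $-\otimes_C B$ applied to the surjection $f$.

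The main content is in the third point. Since $E=\ker\pi$ is a nilpotent two-sided ideal of $B$, it is contained in $\text{rad}(B)$, and from the splitting $B=C\oplus E$ provided by $\sigma$ one obtains $\text{rad}(B)=\text{rad}(C)\oplus E$, so that $B/\text{rad}(B)\cong C/\text{rad}(C)$ as $C$-bimodules. Right-exactness of $N\otimes_C -$ applied to $0\to\text{rad}(B)\to B\to B/\text{rad}(B)\to 0$ then gives, for any right $C$-module $N$,
\[
\text{top}_B(N\otimes_C B)\;=\;(N\otimes_C B)/(N\otimes_C B)\cdot\text{rad}(B)\;\cong\;N\otimes_C(B/\text{rad}(B))\;\cong\;N/N\,\text{rad}(C)\;=\;\text{top}_C N.
\]
By naturality, the map $\text{top}_B(f\otimes_C 1_B)$ is identified with $\text{top}_C(f)$, which is an isomorphism precisely because $f$ is a projective cover in $\mathop{\text{mod}}C$. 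A surjection from a projective module that is an isomorphism on tops has superfluous kernel, so $f\otimes_C 1_B$ is a projective cover in $\mathop{\text{mod}}B$.

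The main obstacle is the careful identification $\text{rad}(B)=\text{rad}(C)\oplus E$ in the split-extension setup, and the compatibility $\text{top}_B(N\otimes_C B)\cong\text{top}_C N$; once those formal identifications are in hand, everything else is routine.
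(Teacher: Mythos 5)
Your argument is correct. Note, however, that the paper itself offers no proof of this lemma: it is imported verbatim from \cite[Lemma~1.3]{AM}, so there is no in-paper proof to compare against. Your route is essentially the standard one from the literature: projectivity and surjectivity of $f\otimes_C 1_B$ are formal, and the real content is that induction commutes with taking tops. The two identifications you isolate do hold and are worth one line each: $E$ is a nilpotent two-sided ideal of $B$, hence $E\subseteq\operatorname{rad}(B)$, and since $B/E\cong C$ one gets $\operatorname{rad}(B)/E=\operatorname{rad}(B/E)=\operatorname{rad}(C)$, which combined with the vector-space splitting $B=\sigma(C)\oplus E$ gives $\operatorname{rad}(B)=\sigma(\operatorname{rad}(C))\oplus E$ and $B/\operatorname{rad}(B)\cong C/\operatorname{rad}(C)$; and the image of $N\otimes_C\operatorname{rad}(B)\to N\otimes_C B$ coincides with $(N\otimes_C B)\operatorname{rad}(B)$ because $\operatorname{rad}(B)$ is an ideal, which justifies the displayed chain of isomorphisms. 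From there, $\operatorname{top}_B(f\otimes_C 1_B)\cong\operatorname{top}_C(f)$ being an isomorphism forces $\ker(f\otimes_C 1_B)\subseteq\operatorname{rad}(P\otimes_C B)$, which is superfluous by Nakayama, exactly as you say. So the proposal is a complete and self-contained proof of a statement the paper only cites.
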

 \begin{lemma}$\emph{\cite{AM}}$.  
 \label{Projective Resolution}
 Suppose B is a split extension of C by a nilpotent bimodule E.  Let $M$ be a $C$-module.  If $P_1\rightarrow P_0\rightarrow M\rightarrow 0$ is a projective presentation, then $P_1\otimes_CB\rightarrow P_0\otimes_CB\rightarrow M\otimes_CB\rightarrow 0$ is a projective presentation.  Furthermore, if the first is minimal, then so is the second.
 \end{lemma}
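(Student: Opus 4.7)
The plan is to combine Theorem 1.2 with the splitting of the tilting torsion pair from Proposition 2.5(e), which places the indecomposable module $M$ in either $\mathcal{X}(T)$ or $\mathcal{Y}(T)$. In each case I verify the corresponding Hom-vanishing hypothesis of Theorem 1.2 by a short chain of torsion-pair manipulations.

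I first treat the case $M\in\mathcal{X}(T)$, where $\id_C M\le 1$ by Proposition 2.5(c), and the target is the hypothesis $\Hom_C(M,\tau_C\Omega_C I_0)=0$ of Theorem 1.2(b). The cascade runs as follows. Writing $I_0=\bigoplus_j I_0^{(j)}$ as a sum of indecomposables, essentiality of the injective envelope forces a nonzero morphism $M\to I_0^{(j)}$ for every $j$, and the torsion-pair orthogonality $\Hom_C(\mathcal{X}(T),\mathcal{Y}(T))=0$ forbids any $I_0^{(j)}$ from lying in $\mathcal{Y}(T)$; hence $I_0\in\mathcal{X}(T)$. Next, every indecomposable projective $C$-module is of the form $\Hom_A(T,T_i)$ for a summand $T_i$ of $T$ and therefore lies in $\mathcal{Y}(T)$; so the projective cover $P(I_0)$ lies in $\mathcal{Y}(T)$, and from $\Omega_C I_0\hookrightarrow P(I_0)$ together with the closure of $\mathcal{Y}(T)$ under submodules we get $\Omega_C I_0\in\mathcal{Y}(T)$. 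Finally, decomposing $\Omega_C I_0=\bigoplus_k N_k$ into indecomposables, the Auslander-Reiten sequence $0\to\tau_C N_k\to E_k\to N_k\to 0$ for each non-projective $N_k$ exhibits $\tau_C N_k$ as a predecessor of $N_k$; by the closure of $\mathcal{Y}(T)$ under predecessors (Proposition 2.5(f)), $\tau_C N_k\in\mathcal{Y}(T)$, while projective $N_k$ contribute zero. Hence $\tau_C\Omega_C I_0\in\mathcal{Y}(T)\cup\{0\}$, and the required Hom vanishes by torsion-pair orthogonality.

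For the case $M\in\mathcal{Y}(T)$ the torsion pair $(\mathcal{X}(T),\mathcal{Y}(T))$ is not symmetric in its two parts, so instead of dualizing the previous argument step by step I reduce to it via the standard $k$-duality $D:\mathop{\text{mod}} C\to\mathop{\text{mod}} C^{op}$. The opposite algebra $C^{op}$ is itself tilted, from $A^{op}$ via the tilting module $DT$; the duality interchanges torsion and torsion-free classes so that $DM\in\mathcal{X}(DT)$ is still indecomposable and rigid; and the relation extension of $C^{op}$ is identified with $B^{op}$. The first case applied to $DM$ in $\mathop{\text{mod}} C^{op}$ then yields that $DM$ is rigid over $B^{op}$, equivalently $M$ is rigid over $B$. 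The main obstacle is the first case---keeping track of summand decompositions so that each step $I_0\in\mathcal{X}(T)\Rightarrow\Omega_C I_0\in\mathcal{Y}(T)\Rightarrow\tau_C\Omega_C I_0\in\mathcal{Y}(T)$ really closes up through the torsion pair---while the second case follows formally from duality.
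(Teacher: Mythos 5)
Your proposal does not address the statement at hand. The lemma to be proved asserts that for a split extension $B$ of $C$ by a nilpotent bimodule $E$, the induction functor $-\otimes_CB$ carries a (minimal) projective presentation $P_1\rightarrow P_0\rightarrow M\rightarrow 0$ in $\mathop{\text{mod}}C$ to a (minimal) projective presentation of $M\otimes_CB$ in $\mathop{\text{mod}}B$. What you have written is instead an argument for a different result of the paper, namely the corollary that an indecomposable rigid module over a tilted algebra $C$ remains rigid over the cluster-tilted algebra $B$: you invoke the splitting torsion pair $(\mathcal{X}(T),\mathcal{Y}(T))$, injective envelopes, closure under predecessors, and the Hom-vanishing hypotheses of the rigidity theorem. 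None of this is relevant here, and indeed none of it is even available: the lemma assumes only that $B$ is a split extension of $C$ by a nilpotent bimodule, so there is no tilting module $T$, no torsion pair, and no indecomposability or rigidity hypothesis on $M$ to work with. Nothing in your write-up establishes exactness of $P_1\otimes_CB\rightarrow P_0\otimes_CB\rightarrow M\otimes_CB\rightarrow 0$, projectivity of the terms $P_i\otimes_CB$, or preservation of minimality, which are the three things actually being claimed.

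A correct argument runs along entirely different lines. Right exactness of $-\otimes_CB$ gives exactness of the induced sequence; each $P_i\otimes_CB$ is projective in $\mathop{\text{mod}}B$ because $P_i$ is a direct summand of a free $C$-module and $C\otimes_CB\cong B$; and minimality is obtained by reducing to the statement that projective covers are preserved under induction (Lemma~\ref{projective cover}), applied both to the cover $P_0\rightarrow M$ and to the cover of the kernel of that map, or equivalently by checking that the induced map $P_1\otimes_CB\rightarrow P_0\otimes_CB$ still has image contained in the radical. I would encourage you to redo the exercise with the correct target statement in front of you.
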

 The following is a crucial result needed in section 3.
 \begin{lemma}$\emph{\cite[Lemma~2.2]{AM}}$.  
 \label{AM, Projective}
 For a C-module M, we have $\pd_B(M\otimes_CB)\leq1$ if and only if $\pd_CM\leq1$ and $\Hom_C(DE, \tau_CM)=0$.
 \end{lemma}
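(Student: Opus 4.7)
The plan is to convert $\pd_B(M \otimes_C B) \leq 1$ into two separate injectivity statements using the splitting $B = C \oplus E$ as a left $C$-module, and then to identify the remaining kernel with $\Hom_C(DE, \tau_C M)$ via the transpose definition of the Auslander-Reiten translate.

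First, take a minimal projective presentation $P_1 \xrightarrow{f} P_0 \to M \to 0$ in $\mathop{\text{mod}}C$. Lemma \ref{Projective Resolution} guarantees that
\[
P_1 \otimes_C B \xrightarrow{f \otimes 1_B} P_0 \otimes_C B \to M \otimes_C B \to 0
\]
is a minimal projective presentation in $\mathop{\text{mod}}B$, so $\pd_B(M \otimes_C B) \leq 1$ if and only if $f \otimes 1_B$ is injective. Since $B = C \oplus E$ as a left $C$-module, there is a natural decomposition $P_i \otimes_C B \cong P_i \oplus (P_i \otimes_C E)$ of right $C$-modules under which $f \otimes 1_B$ splits as $f \oplus (f \otimes 1_E)$. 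Therefore $f \otimes 1_B$ is injective precisely when both $f$ is injective, which is equivalent to $\pd_C M \leq 1$, and $f \otimes 1_E\colon P_1 \otimes_C E \to P_0 \otimes_C E$ is injective.

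To identify $\ker(f \otimes 1_E)$ with $\Hom_C(DE, \tau_C M)$, I would use the definition $\tau_C M = D\,\mathrm{Tr}\,M$, where $\mathrm{Tr}\,M$ is the cokernel of $f^*\colon P_0^* \to P_1^*$ with $(-)^* = \Hom_C(-, C)$. Applying the contravariant left exact functor $\Hom_{C^{op}}(-, E)$ to the presentation $P_0^* \xrightarrow{f^*} P_1^* \to \mathrm{Tr}\,M \to 0$, and using the natural isomorphism $\Hom_{C^{op}}(P^*, E) \cong P \otimes_C E$ for $P$ finitely generated projective (which carries $\Hom_{C^{op}}(f^*, E)$ to $f \otimes 1_E$), yields
\[
0 \to \Hom_{C^{op}}(\mathrm{Tr}\,M, E) \to P_1 \otimes_C E \xrightarrow{f \otimes 1_E} P_0 \otimes_C E.
\]
Finally, the standard duality gives $\Hom_{C^{op}}(\mathrm{Tr}\,M, E) \cong \Hom_C(DE, D\,\mathrm{Tr}\,M) = \Hom_C(DE, \tau_C M)$, so $f \otimes 1_E$ is injective exactly when $\Hom_C(DE, \tau_C M) = 0$.

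The key insight, and the only genuinely delicate point, is to compute $\ker(f \otimes 1_E)$ via the transpose definition directly rather than through an Auslander-Reiten formula such as $\Ext_C^1(M, DE) \cong D\overline{\Hom}_C(DE, \tau_C M)$; the latter route would only give $\overline{\Hom}$ modulo maps factoring through injectives, and would require extra work to recover the plain $\Hom$ statement. Routing through $\mathrm{Tr}$ gives the $\Hom$ statement on the nose, and assembling the three steps above proves the biconditional.
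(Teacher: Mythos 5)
The paper does not prove this lemma; it is quoted verbatim from Assem--Marmaridis \cite[Lemma~2.2]{AM}, so there is no internal proof to compare against. Your argument is correct and is essentially the standard one underlying the cited result: inducing the minimal projective presentation (Lemma~\ref{Projective Resolution}), splitting $f\otimes 1_B$ as $f\oplus(f\otimes 1_E)$ along $B=C\oplus E$, and computing $\ker(f\otimes 1_E)\cong\Hom_{C^{op}}(\mathrm{Tr}\,M,E)\cong\Hom_C(DE,\tau_CM)$ via the transpose. Your closing remark is also well taken --- routing through $\mathrm{Tr}$ rather than the Auslander--Reiten formula is exactly what yields the plain $\Hom$ rather than $\overline{\Hom}$, a distinction the paper itself has to finesse later in the proof of Proposition~\ref{1 case pd}.
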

  
  \subsection{Standard results}
 In this subsection we list several standard results which hold over arbitrary $k$-algebras of finite dimension.  We begin with a result on the projective dimension of arbitrary modules related by a short exact sequence.
 \begin{lemma}$\emph{\cite[Appendix,~Proposition~4.7]{ASS}}$. 
 \label{Standard Projective Restrictions}
 Let A be a finite dimensional $k$-algebra and suppose $0\rightarrow L\rightarrow M\rightarrow N\rightarrow 0$ is a short exact sequence in $\mathop{\emph{mod}}A$.
 \begin{enumerate}
 \item[$(\emph{a})$] $\pd_AN\leq \emph{max}(\pd_AM,1+\pd_AL)$, and equality holds if $\pd_AM\neq\pd_AL.$
 \item[$(\emph{b})$] $\pd_AL\leq \emph{max}(\pd_AM,-1+\pd_AN)$, and equality holds if $\pd_AM\neq\pd_AN.$
 \item[$(\emph{c})$] $\pd_AM\leq\emph{max}(\pd_AL,\pd_AN)$, and equality holds if $\pd_AN\neq 1+\pd_AL.$
 
  \end{enumerate}
 \end{lemma}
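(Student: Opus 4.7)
The plan is to derive all three statements uniformly from the long exact sequence obtained by applying $\Hom_A(-,X)$ to $0\to L\to M\to N\to 0$ for an arbitrary $A$-module $X$:
\[
\cdots\to\Ext_A^{i-1}(L,X)\to\Ext_A^i(N,X)\to\Ext_A^i(M,X)\to\Ext_A^i(L,X)\to\Ext_A^{i+1}(N,X)\to\cdots
\]
together with the standard characterization $\pd_A Y=\sup\{i\ge 0:\Ext_A^i(Y,-)\ne 0\}$. Each inequality then reduces to the observation that if two consecutive terms in a four-term exact segment vanish, so does the one sandwiched between them; each equality assertion reduces to exhibiting a specific test module $X$ on which the decisive $\Ext$ is nonzero.

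For (a), take any $i>\max(\pd_A M,\,1+\pd_A L)$; then $\Ext_A^i(M,X)=0$ and $\Ext_A^{i-1}(L,X)=0$ for every $X$, so exactness forces $\Ext_A^i(N,X)=0$, giving the inequality. For the equality under $\pd_A L>\pd_A M$, choose $X$ with $\Ext_A^{\pd_A L}(L,X)\ne 0$; in the segment
\[
\Ext_A^{\pd_A L}(M,X)\to\Ext_A^{\pd_A L}(L,X)\to\Ext_A^{1+\pd_A L}(N,X)\to\Ext_A^{1+\pd_A L}(M,X)
\]
the two outer groups vanish by the hypotheses on $\pd_A M$, forcing $\Ext_A^{1+\pd_A L}(N,X)\ne 0$, so $\pd_A N\ge 1+\pd_A L$. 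When $\pd_A M>\pd_A L$, pick $X$ with $\Ext_A^{\pd_A M}(M,X)\ne 0$ and use that the map $\Ext_A^{\pd_A M}(N,X)\to\Ext_A^{\pd_A M}(M,X)$ is surjective (its cokernel embeds into $\Ext_A^{\pd_A M}(L,X)=0$) to get $\pd_A N\ge\pd_A M$.

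Parts (b) and (c) follow by the same template, only rotating which two of the three consecutive $\Ext$-groups are assumed to vanish in a given degree. In (b) one bounds $\pd_A L$ using the terms $\Ext_A^i(M,X)$ and $\Ext_A^{i+1}(N,X)$; in (c) one bounds $\pd_A M$ by sandwiching $\Ext_A^i(M,X)$ between $\Ext_A^i(N,X)$ and $\Ext_A^i(L,X)$. The only real work is the bookkeeping in the equality statements, where one must be careful to pick $X$ so that the chosen nonzero $\Ext$ actually lies in the extremal degree; this is a routine verification rather than a substantive obstacle, which is why the result is quoted from the appendix of \cite{ASS} rather than reproved here.
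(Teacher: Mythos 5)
Your proof is correct and is exactly the standard argument: the paper itself gives no proof of this lemma (it is quoted verbatim from \cite[Appendix, Proposition~4.7]{ASS}), and the proof in that reference proceeds precisely as you do, via the long exact $\Ext$-sequence from $\Hom_A(-,X)$ together with the characterization of $\pd_A Y$ by vanishing of $\Ext_A^i(Y,-)$. The only loose ends are the cases where one of the dimensions is infinite, but these are handled by the same sandwich argument applied in arbitrarily large degrees, consistent with your remark that the remaining work is routine bookkeeping.
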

 The next result, which relates the Ext and Tor functors, will be needed in section 3.
 \begin{prop}$\emph{\cite[Appendix,~Proposition~4.11]{ASS}}$
 \label{Tor}
 Let A be a finite dimensional k-algebra.  For all modules Y and Z in $\mathop{\emph{mod}}A$, we have
 \[
 D\emph{Ext}_A^1(Y,DZ)\cong\emph{Tor}_1^A(Y,Z).
 \]
 \end{prop}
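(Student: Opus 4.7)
The plan is to compute both sides from a projective resolution of $Y$ and identify them via the tensor-Hom adjunction combined with the exactness of the duality $D=\Hom_k(-,k)$. Since everything is finite dimensional over $k$, the functor $D$ is exact and satisfies $D^2\cong\mathrm{id}$, so it will freely interchange homology and cohomology.

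First, I would fix a projective resolution $\cdots\to P_1\to P_0\to Y\to 0$ in $\mathop{\text{mod}}A$. By definition, $\text{Tor}_1^A(Y,Z)$ is the first homology of the chain complex $P_\bullet\otimes_A Z$, and $\Ext_A^1(Y,DZ)$ is the first cohomology of the cochain complex $\Hom_A(P_\bullet,DZ)$. The central step is the natural adjunction isomorphism
\[
\Hom_A(P,DZ)=\Hom_A\bigl(P,\Hom_k(Z,k)\bigr)\cong \Hom_k(P\otimes_A Z,k)=D(P\otimes_A Z),
\]
applied termwise to the resolution; naturality in $P$ ensures compatibility with the differentials, yielding an isomorphism of cochain complexes $\Hom_A(P_\bullet,DZ)\cong D(P_\bullet\otimes_A Z)$.

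Passing to (co)homology, exactness of $D$ gives $H^1\bigl(D(P_\bullet\otimes_A Z)\bigr)\cong D\,H_1(P_\bullet\otimes_A Z)$, and therefore $\Ext_A^1(Y,DZ)\cong D\,\text{Tor}_1^A(Y,Z)$. Applying $D$ once more and using $D^2\cong\mathrm{id}$ on finite dimensional modules then produces the stated isomorphism $D\Ext_A^1(Y,DZ)\cong \text{Tor}_1^A(Y,Z)$.

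I do not anticipate a genuine obstacle here; this is a classical homological identity, and the argument is essentially bookkeeping. The only point deserving care is the naturality of the tensor-Hom adjunction in $P$, which guarantees that the levelwise adjunction isomorphisms assemble into a morphism of complexes rather than merely a matching of $k$-vector spaces in each degree.
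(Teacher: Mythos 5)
Your argument is correct and is essentially the standard proof of this fact; the paper itself gives no proof but cites \cite[Appendix, Proposition~4.11]{ASS}, where the result is established in exactly this way (projective resolution of $Y$, the termwise adjunction isomorphism $\Hom_A(P,DZ)\cong D(P\otimes_AZ)$, and exactness of $D$ together with $D^2\cong\mathrm{id}$ on finite dimensional modules). The only cosmetic caveat is that for $\Tor_1^A(Y,Z)$ to be defined with $Y$ a right module, $Z$ should be read as a left $A$-module (as in the paper's application to the bimodule $E$), which your adjunction step already implicitly assumes.
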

 The following lemma is well known.
 \begin{lemma}$\emph{\cite[IV,~Lemma~2.7]{ASS}}$
 \label{Blue 115}
 Let A be a finite dimensional k-algebra and M an A-module.
\begin{enumerate}
 \item[$\emph{(a)}$]$\pd_AM\leq1$ if and only if $\Hom_A(DA,\tau_AM)=0$
 \item[$\emph{(b)}$]$\id_AM\leq1$ if and only if $\Hom_A(\tau_A^{-1}M,A)=0$
 \end{enumerate}
 \end{lemma}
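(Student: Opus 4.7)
The plan is to apply Theorem 1.2, with the choice of sufficient condition dictated by which side of the splitting torsion pair $M$ lies on. Since $M \in \ind C$ and $(\mathcal{X}(T), \mathcal{Y}(T))$ splits (Proposition 2.5(e)), either $M \in \mathcal{Y}(T)$ or $M \in \mathcal{X}(T)$. The argument will repeatedly use the vanishing $\Hom_C(\mathcal{X}(T), \mathcal{Y}(T)) = 0$: any nonzero morphism from an indecomposable $X \in \mathcal{X}(T)$ to an indecomposable $Y \in \mathcal{Y}(T)$ is a non-isomorphism, hence a length-one path, making $X$ a predecessor of $Y$; by Proposition 2.5(f) this forces $X \in \mathcal{Y}(T)$, contradicting the splitting $\mathcal{X}(T) \cap \mathcal{Y}(T) = \emptyset$ on indecomposables.

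Assume first $M \in \mathcal{Y}(T)$, so $\pd_C M \leq 1$ by Proposition 2.5(d). Let $P_0 \to M$ be the (indecomposable) projective cover. I aim to verify condition (a) of Theorem 1.2, namely $\Hom_C(\tau_C^{-1}\Omega_C^{-1}P_0, M) = 0$. The strategy is to show that $\tau_C^{-1}\Omega_C^{-1}P_0$ lies in $\add \mathcal{X}(T)$, after which the desired vanishing follows from the observation above. To locate this module, I analyze the short exact sequence $0 \to P_0 \to I_0(P_0) \to \Omega_C^{-1}P_0 \to 0$ via the Brenner-Butler tilting theorem and the Happel derived equivalence $\mathcal{D}^b(\mathop{\text{mod}} C) \simeq \mathcal{D}^b(\mathop{\text{mod}} A)$: since $P_0 = \Hom_A(T, T_i)$ corresponds to $T_i \in \mathcal{T}(T)$, tracking $\Omega_C^{-1}$ and then $\tau_C^{-1}$ through the Happel translation shows the resulting module sits in $\mathcal{X}(T)$ (or is zero). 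As a check-sum consistent with this, AR duality gives $\underline{\Hom}_C(\tau_C^{-1}\Omega_C^{-1}P_0, M) \cong D\Ext_C^1(M,\Omega_C^{-1}P_0) \cong D\Ext_C^2(M,P_0) = 0$, since $\pd_C M \leq 1$; the torsion-theoretic placement upgrades this to the genuine $\Hom$.

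If instead $M \in \mathcal{X}(T)$, then $\id_C M \leq 1$ by Proposition 2.5(c), and a dual argument verifies condition (b) of Theorem 1.2 by placing every indecomposable summand of $\tau_C \Omega_C I_0$ in $\add \mathcal{Y}(T)$, so $\Hom_C(M, \tau_C\Omega_C I_0) = 0$. The principal obstacle in either case is the torsion-theoretic analysis needed to locate $\tau_C^{\pm 1} \Omega_C^{\mp 1}$ applied to the projective cover or injective envelope inside the correct class; once this is in hand, Theorem 1.2 yields $\Ext_B^1(M, M) = 0$ and $M$ is rigid as a $B$-module.
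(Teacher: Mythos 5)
Your proposal does not address the statement at hand. The statement is Lemma~\ref{Blue 115}, the standard characterization (valid over an arbitrary finite dimensional $k$-algebra $A$ and an arbitrary module $M$) that $\pd_AM\leq1$ if and only if $\Hom_A(DA,\tau_AM)=0$, and dually for injective dimension. What you have written is instead an argument for Corollary~\ref{Ext Cor 1} (equivalently the corollary stated in the introduction): that an indecomposable rigid module over a tilted algebra $C$ remains rigid over the cluster-tilted algebra $B$. Your entire setup --- the splitting torsion pair $(\mathcal{X}(T),\mathcal{Y}(T))$, the appeal to Theorem~\ref{Main Ext}, the case split on which side of the torsion pair $M$ lies --- has no bearing on Lemma~\ref{Blue 115}, whose hypotheses do not even include a tilting module, an indecomposability assumption, or rigidity. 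Nothing in your text engages with the actual claim relating $\pd_AM$ to morphisms from $DA$ into $\tau_AM$.

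For the record, Lemma~\ref{Blue 115} is quoted in the paper from \cite[IV, Lemma~2.7]{ASS} without proof; a proof would proceed from a minimal projective presentation $P_1\to P_0\to M\to 0$ and the induced exact sequence $0\to\tau_AM\to\nu P_1\to\nu P_0$ involving the Nakayama functor $\nu=DA\otimes_A-$, showing that $\Hom_A(DA,\tau_AM)\neq0$ detects precisely the failure of $p_1\colon P_1\to P_0$ to be a monomorphism. None of that machinery appears in your argument. Separately, even judged as a proof of Corollary~\ref{Ext Cor 1}, your route differs from the paper's (which deduces the corollary from Proposition~\ref{Tilting Prop}(b) and Corollary~\ref{Tilt Ext} via Lemma~\ref{Homological Result}, with no need to locate $\tau_C^{-1}\Omega_C^{-1}P_0$ in $\mathcal{X}(T)$), and the key step you defer --- ``tracking $\Omega_C^{-1}$ and then $\tau_C^{-1}$ through the Happel translation'' --- is asserted rather than carried out. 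But the primary issue is that you have proved the wrong statement.
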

For our next two statements we need two definitions.  We say a a submodule $S$ of a module $M$ is $\it{superfluous}$ if, whenever $L\subseteq M$ is a submodule with $L+S=M$, then $L=M$.  An epimorphism $f\!:M\rightarrow N$ is $\it{minimal}$ if $\ker f$ is superfluous in $M$.  In particular, any projective cover is minimal.
 \begin{lemma}$\emph{\cite[I,~Lemma~5.6]{ASS}}$
 \label{WTF}
 Let $A$ be a finite dimensional k-algebra and $M$ an $A$-module.  Then an epimorphism $f\!:P\rightarrow M$ is minimal if and only if for any morphism $g\!:N\rightarrow P$, the surjectivity of $f\circ g$ implies the surjectivity of $g$. 
 \end{lemma}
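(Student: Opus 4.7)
The plan is to prove both directions by translating between the two formulations via the image of $g$.

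For the forward direction, assume $f\colon P\to M$ is minimal, i.e., $\ker f$ is superfluous in $P$, and suppose $g\colon N\to P$ is such that $f\circ g$ is surjective. I would set $L=\operatorname{im}g\subseteq P$ and verify that $L+\ker f=P$: for any $p\in P$, surjectivity of $f\circ g$ provides $n\in N$ with $f(g(n))=f(p)$, so $p-g(n)\in\ker f$, giving $p\in L+\ker f$. Superfluousness of $\ker f$ then forces $L=P$, i.e., $g$ is surjective.

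For the backward direction, assume the lifting property, and let $L\subseteq P$ be any submodule with $L+\ker f=P$. I would take $N=L$ and $g\colon L\hookrightarrow P$ the inclusion. The composition $f\circ g=f|_L$ is surjective, since for $m\in M$ we can write $m=f(p)$ with $p=\ell+k$, $\ell\in L$, $k\in\ker f$, and then $m=f(\ell)=f(g(\ell))$. By hypothesis, $g$ is surjective, so $L=P$, which shows $\ker f$ is superfluous and $f$ is minimal.

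There is essentially no obstacle; the argument is a straightforward unwinding of the definitions. The only point that requires a moment of care is choosing the correct auxiliary map $g$ in the backward direction, namely the inclusion of $L$ into $P$, and verifying that the factorization $L+\ker f=P$ makes the restriction of $f$ to $L$ surjective onto $M$. Both steps use only the definitions of superfluous submodule, minimal epimorphism, and image of a morphism, with no further appeal to the results recalled earlier in the paper.
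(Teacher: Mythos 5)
Your proof is correct: both directions are the standard unwinding of the definitions (taking $L=\operatorname{im}g$ in one direction and $g$ the inclusion of $L$ in the other), and this is precisely the argument given in the cited reference \cite[I, Lemma 5.6]{ASS}; the paper itself states the lemma without proof. Nothing to add.
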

 
\begin{cor}
 \label{Superfluous}
 If $g\!:M\rightarrow N$ and $f\!:N\rightarrow L$ are epimorphisms and $f$ and $g$ are minimal, then $f\circ g$ is minimal.
 \end{cor}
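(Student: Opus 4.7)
The plan is to deduce this corollary directly from the characterization of minimal epimorphisms provided in Lemma \ref{WTF}. That lemma reduces minimality of an epimorphism $\varphi : P \to M$ to the following lifting property: whenever $h : X \to P$ is a morphism such that $\varphi \circ h$ is surjective, $h$ itself must be surjective. So rather than working directly with superfluous submodules, I would package everything through this equivalent condition.

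The main steps are as follows. First, suppose $h : X \to M$ is an arbitrary morphism with $(f \circ g) \circ h$ surjective. Rewrite this composition as $f \circ (g \circ h)$. Since $f : N \to L$ is minimal, Lemma \ref{WTF} applied to the morphism $g \circ h : X \to N$ forces $g \circ h$ to be surjective. Now apply Lemma \ref{WTF} a second time, this time to the minimal epimorphism $g : M \to N$ together with the morphism $h : X \to M$: the surjectivity of $g \circ h$ yields the surjectivity of $h$. Finally, invoke the converse direction of Lemma \ref{WTF} to conclude that the epimorphism $f \circ g : M \to L$ is minimal.

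There is no real obstacle here; the corollary is a formal consequence of the characterization in Lemma \ref{WTF} and amounts to a two-step application of that lifting criterion. The only thing to verify in passing is that $f \circ g$ is itself an epimorphism, which is immediate because the composition of two surjections is surjective. Because the entire argument is categorical, it never needs to mention the underlying module structure or the notion of superfluous submodule explicitly.
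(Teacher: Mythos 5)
Your proposal is correct and follows essentially the same argument as the paper: both proofs apply the lifting criterion of Lemma \ref{WTF} twice, first using minimality of $f$ to get surjectivity of $g\circ h$ and then minimality of $g$ to get surjectivity of $h$, before invoking the converse direction of the lemma to conclude. Your write-up is, if anything, slightly cleaner in stating that the final conclusion is the minimality of $f\circ g$ itself.
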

 \begin{proof}
 Cleary, $f\circ g$ is surjective.  Thus, we must show that $\ker f\circ g$ is superfluous.  Let $h\!:X\rightarrow M$ be a morphism such that $f\circ g\circ h$ is surjective.  Since $f\circ g\circ h=f\circ(g\circ h)$ and $f$ is minimal, we know by Lemma $\ref{WTF}$ that $g\circ h$ is surjective.  Since $g$ is minimal, we may use Lemma $\ref{WTF}$ again to say $h$ is surjective.  Thus, $f\circ g\circ h$ is surjective and a final application of Lemma $\ref{WTF}$ says that $f\circ g\circ h$ is minimal.  
\end{proof}

 \subsection{Induced and coinduced modules in cluster-tilted algebras} In this section we cite several properties of the induction and coinduction functors particularly when $C$ is an algebra of global dimension at most 2 and $B=C\ltimes E$ is the trivial extension of $C$ by the $C$-$C$-bimodule $E=\text{Ext}_C^2(DC,C)$.  In the specific case when $C$ is also a tilted algebra, $B$ is the corresponding cluster-tilted algebra.
 \begin{prop}$\emph{\cite[Proposition~4.1]{SS}}$. 
 \label{SS Prop 4.1}
 Let C be an algebra of global dimension at most 2.  Then
 \begin{enumerate}
 \item[$\emph{(a)}$] $E\cong \tau_C^{-1}\Omega_C^{-1}C$.
 \item[$\emph{(b)}$] $DE\cong \tau_C\Omega_C DC$.
 \item[$\emph{(c)}$] $M\otimes_C E\cong\tau_C^{-1}\Omega_C^{-1}M$.
 \item[$\emph{(d)}$] $D(E\otimes_C DM)\cong\tau_C\Omega_C M$.
 
 \end{enumerate}
 \end{prop}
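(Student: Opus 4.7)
The plan is a direct computation for each of the four parts, using two main tools: the characterization of the Auslander-Reiten translates via the Nakayama functor $\nu = -\otimes_C DC$ together with its partial inverse $\nu^{-1} = \Hom_C(DC,-)$; and the fact that $\Ext^2_C(DC,-)$ may be computed either from a projective resolution of $DC$ (which is fixed) or from an injective resolution of the second argument. The assumption $\mathop{\text{gl.dim}} C \le 2$ is used exactly to ensure that the relevant resolutions of $C$, $DC$, and any $M\in\mathop{\text{mod}} C$ terminate at length two.

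For (a), take the minimal injective resolution $0 \to C \to I_0 \to I_1 \to I_2 \to 0$; by construction, $0 \to \Omega_C^{-1} C \to I_1 \to I_2$ is a minimal injective copresentation of $\Omega_C^{-1} C$, so
\[
\tau_C^{-1}\Omega_C^{-1} C \cong \coker\bigl(\nu^{-1}I_1\to\nu^{-1}I_2\bigr)=\coker\bigl(\Hom_C(DC,I_1)\to\Hom_C(DC,I_2)\bigr),
\]
which is also how $\Ext_C^2(DC,C)$ is computed from the same resolution. Part (b) is the exact dual: starting from a minimal projective resolution $0 \to P_2 \to P_1 \to P_0 \to DC \to 0$, one reads off $\tau_C\Omega_C DC \cong \ker(\nu P_2\to \nu P_1)$, while applying $\Hom_C(-,C)$, taking cohomology in degree $2$, and then dualizing gives $DE \cong \ker(\nu P_2\to \nu P_1)$ via $D\Hom_C(P,C)\cong \nu P$.

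For (c), the key preliminary is the natural isomorphism $M\otimes_C E \cong \Ext_C^2(DC,M)$. Apply the right-exact functor $M\otimes_C-$ to the presentation of $E$ coming from the projective resolution of $DC$, and invoke the standard identification $M\otimes_C\Hom_C(P,C)\cong \Hom_C(P,M)$ for finitely generated projective $P$. Having done this, compute $\Ext_C^2(DC,M)$ by repeating the argument of (a) with $M$ in place of $C$, using the minimal injective resolution of $M$; the answer is $\tau_C^{-1}\Omega_C^{-1} M$.

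Part (d) is the duality partner of (c). The cleanest route is to transfer the argument of (c) to left modules: the symmetric identification $E \otimes_C DM \cong \Ext^2_{C^{op}}(DC,DM)$, combined with the Ext--Tor duality of Proposition \ref{Tor} (used in its $\Ext^2$ version) gives
\[
D(E\otimes_C DM)\cong \text{Tor}_2^C(M,DC),
\]
and computing this Tor group from a minimal projective resolution $0 \to P_2 \to P_1 \to P_0 \to M \to 0$ yields $\ker(\nu P_2\to \nu P_1)\cong \tau_C\Omega_C M$ by the argument of (b). The main obstacle throughout is the careful bookkeeping of left- versus right-module structures and of the several Hom--Tensor and $D$--dualities; once these are tracked correctly, each step is mechanical and follows from the definitions of $\tau_C^{\pm 1}$, $\Omega_C^{\pm 1}$, and $\nu^{\pm 1}$.
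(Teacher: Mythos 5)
This proposition is quoted in the paper directly from \cite[Proposition~4.1]{SS} and is not proved there, so there is no internal proof to compare against; your argument is, as far as I can check, correct, and it is essentially the standard proof of this statement (and the one in the cited source): use $\mathop{\text{gl.dim}}C\leq 2$ to get length-two resolutions, compute $\Ext_C^2(DC,-)$ as the top (co)homology of the resulting three-term complex, and match the answer against the Nakayama-functor descriptions $\tau_C N\cong\ker(\nu P_1\to\nu P_0)$ and $\tau_C^{-1}N\cong\coker(\nu^{-1}I^0\to\nu^{-1}I^1)$ applied to the (co)syzygy, whose minimal (co)presentation is read off from the minimal resolution of the original module. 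Parts (a), (b), (c) are complete as written. The one place you are thinner than you should be is part (d): the presentation $E\cong\coker\bigl(\Hom_C(P_1,C)\to\Hom_C(P_2,C)\bigr)$ coming from a resolution of $DC$ by \emph{right} $C$-modules exhibits the left $C$-action on $E$ but not the right one (the right action comes from the left structure of $DC$, which acts on the chosen resolution only up to homotopy), so before you may write $E\otimes_CDM\cong\Ext^2_{C^{op}}(DC,DM)$ you must justify that $E$ recomputed from a projective resolution of $DC$ as a \emph{left} module is the same bimodule --- i.e.\ the balancedness of $\Ext$ in its bimodule arguments. You flag this under ``bookkeeping,'' and it is a standard fact (alternatively, (d) follows from (c) applied to $C^{op}$ together with $D\tau^{-1}_{C^{op}}\cong\tau_CD$ and $D\Omega^{-1}_{C^{op}}\cong\Omega_CD$), but it is the genuine content of the left/right symmetry and deserves an explicit sentence rather than a parenthetical.
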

 
 The next two results use homological dimensions to extract information about induced and coinduced modules.
 \begin{prop}$\emph{\cite[Proposition~4.2]{SS}}$.  
 \label{SS, Proposition 4.2}
 Let C be an algebra of global dimension at most 2, and let $B=C\ltimes E$.  Suppose $M\in\mathop{\emph{mod}}C$, then
 \begin{enumerate}
 \item[$\emph{(a)}$] $\id_CM\leq1$ if and only if $M\otimes_CB\cong M$.
 \item[$\emph{(b)}$] $\pd_CM\leq1$ if and only if $D(B\otimes_CDM)\cong M$.
 \end{enumerate}
 \end{prop}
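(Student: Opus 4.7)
The plan is to derive both equivalences directly from the short exact sequences of Proposition~\ref{SS, Proposition 3.6} combined with the identifications in Proposition~\ref{SS Prop 4.1}, using the standard fact that $\tau_C^{-1}$ annihilates exactly the injective modules and $\tau_C$ annihilates exactly the projective modules.

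For part~(a), I would begin with the canonical short exact sequence of $B$-modules
\[
0 \to M \otimes_C E \to M \otimes_C B \to M \to 0
\]
from Proposition~\ref{SS, Proposition 3.6}(a). Any abstract isomorphism $M \otimes_C B \cong M$ forces equality of $k$-dimensions, and together with exactness of the above sequence this gives $M \otimes_C E = 0$; conversely, vanishing of the kernel collapses the canonical surjection to an isomorphism. By Proposition~\ref{SS Prop 4.1}(c), $M \otimes_C E \cong \tau_C^{-1} \Omega_C^{-1} M$, so the task reduces to the general equivalence
\[
\tau_C^{-1} \Omega_C^{-1} M = 0 \iff \id_C M \leq 1.
\]
Using the injective envelope $0 \to M \to I_0 \to \Omega_C^{-1} M \to 0$, the condition $\id_C M \leq 1$ is equivalent to $\Omega_C^{-1} M$ being injective (or zero), which in turn is equivalent to $\tau_C^{-1}\Omega_C^{-1} M = 0$.

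Part~(b) is entirely dual. Applying Proposition~\ref{SS, Proposition 3.6}(b) yields
\[
0 \to M \to D(B \otimes_C DM) \to D(E \otimes_C DM) \to 0;
\]
the right-hand term is identified with $\tau_C \Omega_C M$ via Proposition~\ref{SS Prop 4.1}(d), and the minimal projective cover $0 \to \Omega_C M \to P_0 \to M \to 0$ shows that $\tau_C \Omega_C M = 0$ iff $\Omega_C M$ is projective iff $\pd_C M \leq 1$. The same dimension-count argument as in (a) promotes the abstract isomorphism $D(B \otimes_C DM) \cong M$ to the vanishing of the cokernel.

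I do not foresee a substantial obstacle: the hypothesis $\mathop{\text{gl.dim}} C \leq 2$ is already used in establishing the identifications of Proposition~\ref{SS Prop 4.1}, so it enters the present argument only through those results, and everything else is elementary homological bookkeeping. The only point requiring explicit justification is the ``abstract isomorphism implies kernel (or cokernel) vanishing'' step, which follows from a one-line dimension count applied to the short exact sequence in each part.
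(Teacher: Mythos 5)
Your argument is correct. Note, however, that the paper does not prove this statement at all: it is quoted verbatim from \cite[Proposition~4.2]{SS} and used as an imported black box, so there is no internal proof to compare against. Your derivation --- reducing the abstract isomorphism to the vanishing of $M\otimes_C E\cong\tau_C^{-1}\Omega_C^{-1}M$ (resp.\ $D(E\otimes_C DM)\cong\tau_C\Omega_C M$) via a dimension count on the short exact sequences of Proposition~\ref{SS, Proposition 3.6}, and then using that $\tau_C^{-1}$ kills exactly the injectives and $\tau_C$ exactly the projectives --- is sound and is essentially the standard proof given in \cite{SS} itself, with no circularity since Proposition~\ref{SS Prop 4.1} is established there independently.
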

  \begin{lemma}$\emph{\cite[Lemma~4.4]{SS}}$.  
 \label{SS, Lemma 4.4}
 Let C be an algebra of global dimension 2 and $M$ a $C$-module.
 \begin{enumerate}
 \item[$\emph{(a)}$] $\pd_CN=2$ for all nonzero $N\in\mathop{\emph{add}}(M\otimes_C E)$.
 \item[$\emph{(b)}$] $\id_CN=2$ for all nonzero $N\in\mathop{\emph{add}}(D(E\otimes_C DM))$.
 \end{enumerate}
 \end{lemma}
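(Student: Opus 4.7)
I focus on part (a); part (b) is the formal dual, using Proposition \ref{SS Prop 4.1}(d) in place of (c) and Lemma \ref{Blue 115}(b) in place of (a), with $\tau_C^{-1}\Omega_C^{-1}$ replaced by $\tau_C\Omega_C$, injective envelopes by projective covers, and $DC$ by $C$. Since projective dimension of a finite direct sum equals the maximum of those of its summands, I may assume $N$ is indecomposable. By Proposition \ref{SS Prop 4.1}(c), $M\otimes_C E\cong \tau_C^{-1}\Omega_C^{-1}M$, so $N=\tau_C^{-1}Y$ for some non-injective indecomposable direct summand $Y$ of $\Omega_C^{-1}X$, where $X$ is an indecomposable direct summand of $M$. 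Since $N$ lies in the image of $\tau_C^{-1}$, it has no projective summand, so $\tau_C N\cong Y\neq 0$.

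Since the global dimension of $C$ is at most $2$, we already have $\pd_C N\leq 2$, and to conclude $\pd_C N=2$ it suffices, by Lemma \ref{Blue 115}(a), to exhibit a nonzero element of $\Hom_C(DC,\tau_C N)=\Hom_C(DC,Y)$. Consider the injective envelope short exact sequence
\[
0\to X\to I_0\to \Omega_C^{-1}X\to 0.
\]
Composing the surjection $I_0\twoheadrightarrow \Omega_C^{-1}X$ with the projection $\Omega_C^{-1}X\twoheadrightarrow Y$ onto the direct summand $Y$ yields a nonzero morphism $I_0\to Y$. Since $I_0$ is injective and $DC$ is an injective cogenerator, $I_0$ is a direct summand of $DC^{k}$ for some $k\geq 1$; extending the map $I_0\to Y$ by zero on a complementary summand produces a nonzero map $DC^{k}\to Y$, at least one of whose $k$ components gives a nonzero element of $\Hom_C(DC,Y)$, as required.

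The main obstacle is the final non-vanishing step: one needs to use that $Y$, although non-injective, is nonetheless a quotient of the injective $I_0$ in order to build a nonzero map from the injective cogenerator $DC$. Every other step is routine bookkeeping with the identifications provided by Proposition \ref{SS Prop 4.1} and the criterion of Lemma \ref{Blue 115}.
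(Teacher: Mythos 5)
Your argument is correct. Note, however, that the paper does not prove this statement at all: it is quoted verbatim as Lemma~4.4 of the cited preprint of Schiffler and Serhiyenko, so there is no in-paper proof to compare against. What you have written is a sound, self-contained derivation from ingredients the paper does quote: the identification $M\otimes_C E\cong\tau_C^{-1}\Omega_C^{-1}M$ of Proposition~\ref{SS Prop 4.1}, the criterion $\pd_C N\le 1\Leftrightarrow\Hom_C(DC,\tau_C N)=0$ of Lemma~\ref{Blue 115}, and the key observation that $\tau_C N\cong Y$ is a nonzero quotient of an injective module and hence receives a nonzero map from the injective cogenerator $DC$. The reduction to indecomposable summands, the removal of injective summands under $\tau_C^{-1}$, and the dualization for part (b) are all handled correctly.
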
 
We end this section with a lemma which tells us what the projective cover of a projective $C$-module is in $\mathop{\text{mod}}B$.
\begin{lemma}$\emph{\cite[Lemma~2.7]{ABS}}$
\label{Last One}
Let C be an algebra of global dimension at most 2 and $B=C\ltimes E$.  Suppose P is a projective C-module.  Then the induced module, $P\otimes_CB$, is a projective cover of P in $\mathop{\emph{mod}}B$.
\end{lemma}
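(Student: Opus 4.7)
The plan is to exhibit the natural surjection $P\otimes_C B\twoheadrightarrow P$ coming from Proposition \ref{SS, Proposition 3.6}(a) and show that it is a projective cover, i.e.\ that $P\otimes_C B$ is projective in $\mathop{\text{mod}}B$ and that the kernel is superfluous.

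First I would verify projectivity of the induced module. Since $P$ is a projective $C$-module it is a direct summand of some free module $C^n$, and applying the right exact functor $-\otimes_C B$ shows that $P\otimes_C B$ is a direct summand of $C^n\otimes_C B\cong B^n$, hence projective in $\mathop{\text{mod}}B$. Next, Proposition \ref{SS, Proposition 3.6}(a) applied to $M=P$ gives the short exact sequence of $B$-modules
\[
0\to P\otimes_C E\to P\otimes_C B\xrightarrow{\pi} P\to 0,
\]
so $\pi$ is a surjection from a projective $B$-module onto $P$; it remains to check essentiality.

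For essentiality, I would use the definition of the relation extension. Since $B=C\ltimes E$ is a split extension and $E$ is a nilpotent bimodule, $E$ is a nilpotent two-sided ideal of $B$, and therefore $E\subseteq\mathop{\text{rad}}B$. Identifying $P\otimes_C E$ with its image in $P\otimes_C B$, we have $P\otimes_C E=(P\otimes_C B)\cdot E\subseteq(P\otimes_C B)\cdot\mathop{\text{rad}}B=\mathop{\text{rad}}(P\otimes_C B)$, so $\ker\pi$ is a superfluous submodule and $\pi$ is essential.

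Combining these two points, $\pi\colon P\otimes_C B\to P$ is an essential epimorphism from a projective $B$-module onto $P$, i.e.\ a projective cover. The main (minor) obstacle is the identification of $P\otimes_C E$ with the submodule $(P\otimes_C B)\cdot E$ of the induced module and the bookkeeping that turns the nilpotence of $E$ as a bimodule into the containment $E\subseteq\mathop{\text{rad}}B$; once that is in place, the superfluous-kernel argument is automatic and no appeal to Lemma \ref{projective cover} is needed.
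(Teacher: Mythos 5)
Your proof is correct. Note that the paper itself gives no argument for this lemma --- it is quoted verbatim from \cite[Lemma~2.7]{ABS} --- so there is no internal proof to compare against; your argument is the standard one (and essentially the one in the cited source): $P\otimes_CB$ is projective because $-\otimes_CB$ sends summands of $C^n$ to summands of $B^n$, the surjection onto $P$ comes from the split-extension exact sequence of Proposition~\ref{SS, Proposition 3.6}(a), and the kernel $P\otimes_CE=(P\otimes_CB)\cdot E$ is superfluous because the nilpotent ideal $E$ lies in $\mathop{\text{rad}}B$. All the identifications you flag (exactness on the left via projectivity of $P$, $P\otimes_CE=(P\otimes_CB)\cdot E$ using $BE=E$, and $M\cdot\mathop{\text{rad}}B=\mathop{\text{rad}}M$ for finitely generated modules over an Artin algebra) are legitimate, so the proof is complete as written.
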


We also have the following important fact.
 \begin{lemma}$\emph{\cite[Corollary~1.2]{AZ}}$.
 \label{AR submodule}
 $\tau_CM$ and $\tau_B(M\otimes_CB)$ are submodules of $\tau_BM$.
 \end{lemma}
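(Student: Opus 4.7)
The plan is to compute all three Auslander--Reiten translations from minimal projective presentations via the Nakayama functor $\nu_\Lambda = D\Hom_\Lambda(-,\Lambda)$, using the characterization $\tau_\Lambda M = \ker(\nu_\Lambda P_1 \to \nu_\Lambda P_0)$ extracted from a minimal projective presentation $P_1 \to P_0 \to M \to 0$. Fix such a presentation $P_1 \xrightarrow{f_1} P_0 \to M \to 0$ in $\mathop{\text{mod}}C$. By Lemma \ref{Projective Resolution}, applying $-\otimes_C B$ yields the minimal projective presentation $P_1\otimes_C B \to P_0\otimes_C B \to M\otimes_C B \to 0$ of $M\otimes_C B$ in $\mathop{\text{mod}}B$. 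By Lemma \ref{Last One}, the canonical map $P_0\otimes_C B \to P_0$ is a projective cover in $\mathop{\text{mod}}B$, and composing with $P_0 \to M$ is a minimal epimorphism by Corollary \ref{Superfluous}; consequently a minimal projective presentation of $M$ in $\mathop{\text{mod}}B$ takes the form $P_1^B \xrightarrow{f^B} P_0\otimes_C B \to M \to 0$ for some projective $P_1^B$.

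For $\tau_CM \hookrightarrow \tau_B(M\otimes_C B)$, I would dualize the short exact sequence of $C$-$C$-bimodules $0 \to E \to B \to C \to 0$ to obtain $0 \to DC \to DB \to DE \to 0$. Since the projectives $P_0$ and $P_1$ are flat over $C$, tensoring gives a natural short exact sequence
$$0 \to \nu_C P_i \to \nu_B(P_i \otimes_C B) \to P_i \otimes_C DE \to 0,$$
where I use $\nu_B(P_i\otimes_C B) = (P_i\otimes_C B)\otimes_B DB \cong P_i\otimes_C DB$. Applying the snake lemma to the two vertical maps induced by $f_1$ immediately produces $\tau_C M \hookrightarrow \tau_B(M\otimes_C B)$.

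For $\tau_B(M\otimes_C B) \hookrightarrow \tau_B M$, observe that $P_0\otimes_C B \to M$ factors through $M\otimes_C B$, so the image of $f_1\otimes_C B\colon P_1\otimes_C B \to P_0\otimes_C B$ equals $\Omega_B(M\otimes_C B) \subseteq \Omega_B M$. Lifting through the projective cover $P_1^B \to \Omega_B M$ yields $\alpha\colon P_1\otimes_C B \to P_1^B$ making a square with $f^B$ and $f_1\otimes_C B$ commute. The crucial step is showing $\alpha$ is a split monomorphism; once established, $\nu_B\alpha$ is also split mono and restricts to the desired inclusion on kernels $\tau_B(M\otimes_C B) \hookrightarrow \tau_B M$, and combining with the previous step gives $\tau_C M \hookrightarrow \tau_B M$ as well. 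The main obstacle is precisely this split-mono claim: my plan is to analyse the short exact sequence $0 \to \Omega_B(M\otimes_C B) \to \Omega_B M \to M\otimes_C E \to 0$ and construct a section using the natural embedding $P_0\otimes_C E \hookrightarrow P_0\otimes_C B$, which already sits inside $\Omega_B M$ and surjects onto $M\otimes_C E$ via $P_0\otimes_C B \to M\otimes_C B$. Passing to minimal projective covers should then force a decomposition $P_1^B \cong (P_1\otimes_C B) \oplus Q$ for some projective $Q \in \mathop{\text{mod}}B$, establishing the split-mono property of $\alpha$.
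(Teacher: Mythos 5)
A preliminary remark: the paper offers no proof of this lemma---it is imported verbatim from \cite[Corollary~1.2]{AZ}---so your argument has to stand on its own. Your first inclusion is correct and cleanly done: tensoring $0\to DC\to DB\to DE\to 0$ with the flat modules $P_i$, identifying $\nu_CP_i\cong P_i\otimes_CDC$ and $\nu_B(P_i\otimes_CB)\cong P_i\otimes_CDB$, and applying the snake lemma to the resulting map of short exact sequences does yield $\tau_CM\hookrightarrow\tau_B(M\otimes_CB)$; the minimality of the induced presentation, which you need to identify the second kernel with $\tau_B(M\otimes_CB)$, is Lemma~\ref{Projective Resolution}.

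The gap is exactly where you flag it. The sequence $0\to\Omega_B(M\otimes_CB)\to\Omega_BM\to M\otimes_CE\to 0$ need not split, and your proposed section does not exist: the submodule $P_0\otimes_CE\subseteq\Omega_BM$ does surject onto $M\otimes_CE$, but a submodule surjecting onto a quotient is not a section of the quotient map unless that surjection itself splits. Concretely, writing $P_0\otimes_CB\cong P_0\oplus(P_0\otimes_CE)$ as $C$-modules one finds $\Omega_BM\cong\Omega_CM\oplus(P_0\otimes_CE)$ and $\Omega_B(M\otimes_CB)\cong\Omega_CM\oplus(\Omega_CM\cdot E)$, so a splitting would amount to a $C$-linear section of $f_0\otimes 1_E\colon P_0\otimes_CE\to M\otimes_CE$, which fails in general. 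The good news is that your split-mono claim for $\alpha$ is nevertheless true; it should be extracted from the \emph{other} filtration $0\to P_0\otimes_CE\to\Omega_BM\xrightarrow{\pi'}\Omega_CM\to 0$ rather than from yours. Indeed, the composite $P_1\otimes_CB\to\Omega_B(M\otimes_CB)\hookrightarrow\Omega_BM\xrightarrow{\pi'}\Omega_CM$ coincides with $P_1\otimes_CB\to P_1\to\Omega_CM$, a composition of minimal epimorphisms, hence a projective cover by Lemma~\ref{Last One} and Corollary~\ref{Superfluous}; lifting the surjection $P_1^B\to\Omega_BM\xrightarrow{\pi'}\Omega_CM$ through this projective cover produces $\beta\colon P_1^B\to P_1\otimes_CB$ with $\beta\circ\alpha$ an isomorphism by Lemma~\ref{WTF}, so $\alpha$ is a split monomorphism. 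With that repair, applying $\nu_B$ to your commuting square and restricting to kernels finishes the argument as you describe.
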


 \section{Homological Dimensions}
 In this section let $C$ be an algebra of global dimension 2, $E=\text{Ext}_C^2(DC,C)$, and $B=C\ltimes E$ be the relation extension.  We investigate what happens to the projective dimension of a $C$-module $M$ when viewed as a $B$-module.  In the special case when $C$ is a tilted algebra and $B$ is the corresponding cluster-tilted algebra, we provide a complete classification.  First, we prove a lemma which provides a useful criteria for a $C$-module to have projective or injective dimension at most 1 in an algebra of global dimension 2.
 \begin{lemma} 
 \label{Homological Result}
 Let M be a C-module.  Then, 
 \begin{enumerate}
 \item[$\emph{(a)}$] $\pd_CM\leq1$ if and only if $\emph{Hom}_C(\tau_C^{-1}\Omega_C^{-1}C,M)=0.$
 \item[$\emph{(b)}$] $\id_CM\leq1$ if and only if $\emph{Hom}_C(M,\tau_C\Omega_C DC)=0.$
 \end{enumerate}
 \end{lemma}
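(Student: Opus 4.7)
Both parts are dual; I will outline the plan for~(a), with (b) following the same scheme starting from the projective cover $0 \to \Omega_C(DC) \to P^0 \to DC \to 0$, using Lemma~\ref{Blue 115}(a) in place of~(b), and invoking the $\overline{\Hom}$-version $\overline{\Hom}_C(M,\tau_C X) \cong D\Ext_C^1(X,M)$ of the Auslander--Reiten formula with $X = \Omega_C(DC)$. The overall strategy for~(a) is to rewrite $\Hom_C(E,M) = \Hom_C(\tau_C^{-1}\Omega_C^{-1}C,M)$ (using Proposition~\ref{SS Prop 4.1}(a)) as $D\Ext_C^2(M,C)$, and then to exploit that in global dimension~$2$ this vanishes exactly when $\pd_C M \leq 1$.

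The first step is that the injective envelope $0 \to C \to I^0 \to \Omega_C^{-1}C \to 0$, together with $\id_C C \leq \mathop{\text{gl.dim}} C = 2$, yields $\id_C \Omega_C^{-1}C \leq 1$. Lemma~\ref{Blue 115}(b) applied to $\Omega_C^{-1}C$ then gives $\Hom_C(E,C) = 0$, and since every projective $C$-module is a summand of some $C^n$ this propagates to $\Hom_C(E,P)=0$ for every projective $P$. In particular no nonzero morphism $E\to M$ can factor through a projective, so the natural surjection $\Hom_C(E,M) \twoheadrightarrow \underline{\Hom}_C(E,M)$ onto the stable Hom (the quotient by maps through projectives) is an isomorphism. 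Combining this with the Auslander--Reiten formula $\underline{\Hom}_C(\tau_C^{-1}N,M) \cong D\Ext_C^1(M,N)$ applied to $N = \Omega_C^{-1}C$, and then with the dimension shift $\Ext_C^1(M,\Omega_C^{-1}C) \cong \Ext_C^2(M,C)$ obtained from the long exact sequence of $\Ext_C(M,-)$ applied to the injective envelope (using $\Ext_C^{\geq 1}(M,I^0)=0$), produces the identification
\[
\Hom_C(E,M) \;\cong\; D\Ext_C^2(M,C).
\]

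It remains to check that $\Ext_C^2(M,C)=0$ is equivalent to $\pd_C M \leq 1$. The forward direction is immediate. For the converse, for any $C$-module $N$ pick a surjection $P\twoheadrightarrow N$ with $P$ projective; the long exact sequence of $\Ext_C(M,-)$, truncated by $\Ext_C^3(M,-)=0$ (from $\mathop{\text{gl.dim}} C = 2$), exhibits $\Ext_C^2(M,N)$ as a quotient of $\Ext_C^2(M,P)$, which vanishes because $P$ is a summand of $C^n$. Thus $\Ext_C^2(M,-)=0$, equivalently $\Ext_C^1(\Omega_C M,-)=0$, which forces $\Omega_C M$ to be projective. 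The only input beyond the cited results is the $\underline{\Hom}$-form of the Auslander--Reiten formula, and this is the step I expect to be the only subtle one; fortunately the previous paragraph shows that on our specific module $E$ the stable Hom coincides with the ordinary Hom, so in effect Lemma~\ref{Blue 115}(b) together with the dimension shift does all the real work.
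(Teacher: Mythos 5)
Your proof is correct and follows essentially the same route as the paper: dimension-shift along the injective copresentation $0 \to C \to I^0 \to \Omega_C^{-1}C \to 0$, identify $\Ext_C^1(M,\Omega_C^{-1}C)$ with $\Ext_C^2(M,C)$, and apply the Auslander--Reiten formula, using global dimension $2$ to convert $\Ext_C^2(M,C)=0$ into $\pd_C M\leq 1$. You are in fact more careful than the paper at one point: the AR formula only yields the stable $\underline{\Hom}$, and your observation via Lemma~\ref{Blue 115}(b) that $\Hom_C(\tau_C^{-1}\Omega_C^{-1}C,P)=0$ for every projective $P$ is precisely what is needed to upgrade $\underline{\Hom}_C(\tau_C^{-1}\Omega_C^{-1}C,M)=0$ to $\Hom_C(\tau_C^{-1}\Omega_C^{-1}C,M)=0$ in the forward direction, a step the paper leaves implicit.
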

 \begin{proof}
 We prove (a) with the proof of (b) being similar.  Assume $\pd_CM\leq1$.  Consider the short exact sequence 
 \[
 0\rightarrow C\rightarrow I_0\rightarrow \Omega_C^{-1}C\rightarrow 0
 \]
 where $I_0$ is an injective envelope of $C$.  Apply Hom$_C(M,-)$ to obtain an exact sequence
 \[
 \text{Ext}_C^1(M,I_0)\rightarrow\text{Ext}_C^1(M,\Omega_C^{-1}C)\rightarrow\text{Ext}_C^2(M,C).  
 \]
 Now, $\text{Ext}_C^1(M,I_0)=0$ because $I_0$ is injective and $\text{Ext}_C^2(M,C)=0$ because $\pd_CM\leq1$.  Since the sequence is exact, $\Ext_C^1(M,\Omega_C^{-1}C)=0$.  By the Auslander-Reiten formulas, $D{\text{Hom}}_C(\tau_C^{-1}\Omega_C^{-1}C,M)\cong\text{Ext}_C^1(M,\Omega_C^{-1}C).$  Thus, 
 \[
 0=\text{Ext}_C^1(M,\Omega_C^{-1}C)\cong D\text{Hom}_C(\tau_C^{-1}\Omega_C^{-1}C,M).  
 \]
 \par
 Conversely, assume Hom$_C(\tau_C^{-1}\Omega_C^{-1}C,M)=0$.  Then we have 
 \[
 D\underline{\text{Hom}}_C(\tau_C^{-1}\Omega_C^{-1}C,M)\cong\text{Ext}_C^1(M,\Omega_C^{-1}C)=0 
 \]
 by the Auslander-Reiten formulas.  We then have $\text{Ext}_C^2(M,C)\cong\text{Ext}_C^1(M,\Omega_C^{-1}C)=0$.  Since C has global dimension equal to 2, this implies $\pd_CM\leq1$.
 \end{proof}

 We begin with the case where $M$ is a projective $C$-module.
 \begin{prop} 
 \label{0 case pd}
 Let M be a projective C-module. Then $\pd_BM=0$ if and only if $\id_CM\leq1$. 
 \end{prop}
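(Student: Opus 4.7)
The plan is to combine two results from the preliminaries: Lemma \ref{Last One}, which says the induced module $M\otimes_C B$ is a projective cover of $M$ in $\mathop{\text{mod}}B$ whenever $M$ is projective over $C$, and Proposition \ref{SS, Proposition 4.2}(a), which characterizes $\id_C M \leq 1$ by the isomorphism $M\otimes_C B \cong M$ of $B$-modules. With these, the whole proof reduces to the observation that a module is isomorphic to its own projective cover precisely when it is projective.

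For the ``if'' direction, I would assume $\id_C M \leq 1$. Then Proposition \ref{SS, Proposition 4.2}(a) gives $M\otimes_C B \cong M$ in $\mathop{\text{mod}}B$, and since $M$ is projective over $C$, Lemma \ref{Last One} tells us $M\otimes_C B$ is projective in $\mathop{\text{mod}}B$. Transporting projectivity across the isomorphism yields $\pd_B M = 0$.

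Conversely, suppose $\pd_B M = 0$. Because $M$ is projective in $\mathop{\text{mod}}C$, Lemma \ref{Last One} says that the canonical surjection $M\otimes_C B \to M$ is a projective cover of $M$ in $\mathop{\text{mod}}B$. But $M$ itself is projective as a $B$-module, so its projective cover is $M$ itself. Uniqueness of the projective cover then forces $M\otimes_C B \cong M$, and a second application of Proposition \ref{SS, Proposition 4.2}(a) yields $\id_C M \leq 1$.

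I do not anticipate a serious obstacle here, as the substantive homological content has already been packaged in Lemma \ref{Last One} and Proposition \ref{SS, Proposition 4.2}(a). The only point worth being careful about is that the isomorphism $M\otimes_C B \cong M$ must be taken as one of $B$-modules, not merely of $C$-modules, so that it can be compared with the $B$-projective cover; but this is precisely the strength of Proposition \ref{SS, Proposition 4.2}(a).
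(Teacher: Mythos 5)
Your proof is correct and follows essentially the same route as the paper: both directions hinge on Lemma \ref{Last One} (the induced module is a projective cover of $M$ in $\mathop{\text{mod}}B$) together with Proposition \ref{SS, Proposition 4.2}(a); the paper merely phrases the converse via the short exact sequence $0\to\tau_C^{-1}\Omega_C^{-1}M\to M\otimes_CB\to M\to 0$ rather than via uniqueness of projective covers, which is the same observation.
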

 \begin{proof}
 Assume $\pd_BM=0$.  By Proposition $\ref{SS, Proposition 3.6}$ we have a short exact sequence 
 \[
 0\rightarrow\tau_C^{-1}\Omega_C^{-1}M\rightarrow M\otimes_CB\rightarrow M\rightarrow0
 \]
 where $M\otimes_CB$ is a projective cover by Lemma $\ref{projective cover}$.  This implies $M\otimes_CB\cong M$ and $\tau_C^{-1}\Omega_C^{-1}M=0$.  By Proposition $\ref{SS, Proposition 4.2}$, we conclude $\id_CM\leq1$.
 \par
 Conversely, assume $\id_CM\leq1$.  Then Proposition $\ref{SS, Proposition 4.2}$ implies $M\otimes_CB\cong M$ and we conclude $M$ is a projective $B$-module.
 \end{proof}
 The case where the projective dimension of $M$ is equal to 2 holds in a more general setting which we explicitly state.
 \begin{prop} 
 \label{2 case pd}
 Let C be an algebra of global dimension 2 with B a split extension by a nilpotent bimodule E.  If M is a C-module with $\pd_CM=2$, then $\pd_BM\geq2$.
 \end{prop}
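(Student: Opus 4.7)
The plan is to argue by contradiction. Assume $\pd_BM\leq 1$ and derive a contradiction with $\pd_CM=2$. I would first rule out $\pd_BM=0$: an indecomposable projective $B$-module has the form $eB\cong eC\oplus eE$ as a right $C$-module, where $e$ is a primitive idempotent of $C$ (lifted into $B$ via the section $\sigma$). Since $M$ is inflated from $\mathop{\text{mod}}C$ through the projection $B\to C$, the ideal $E$ annihilates $M$; so if $M$ were projective in $\mathop{\text{mod}}B$, each indecomposable summand would satisfy $eE=0$, and $M\cong\bigoplus e_iC$ would already be projective in $\mathop{\text{mod}}C$, contradicting $\pd_CM=2$.

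So $\pd_BM=1$, and the first syzygy $\Omega_BM$ in $\mathop{\text{mod}}B$ is a nonzero projective $B$-module. Let $P_0\twoheadrightarrow M$ be a projective cover in $\mathop{\text{mod}}C$. By Lemma~\ref{projective cover} the map $P_0\otimes_CB\to M\otimes_CB$ is a projective cover in $\mathop{\text{mod}}B$, while $M\otimes_CB\to M$ is minimal because its kernel $M\otimes_CE$ lies in $\mathop{\text{rad}}(M\otimes_CB)$ (as $E\subseteq\mathop{\text{rad}}B$); by Corollary~\ref{Superfluous} the composition $P_0\otimes_CB\twoheadrightarrow M\otimes_CB\twoheadrightarrow M$ is a projective cover of $M$ in $\mathop{\text{mod}}B$, so $\Omega_BM$ is its kernel. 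Combining the sequence $0\to P_0\otimes_CE\to P_0\otimes_CB\to P_0\to 0$ from Proposition~\ref{SS, Proposition 3.6} with $0\to\Omega_CM\to P_0\to M\to 0$ by a snake-lemma computation yields the short exact sequence of $B$-modules
\[
0\to P_0\otimes_CE\to\Omega_BM\to\Omega_CM\to 0.
\]
Restricting to $\mathop{\text{mod}}C$ and using the canonical decomposition $(P_0\otimes_CB)|_C\cong P_0\oplus(P_0\otimes_CE)$, this sequence splits, giving $\Omega_BM|_C\cong\Omega_CM\oplus(P_0\otimes_CE)$. On the other hand, since $\Omega_BM$ is a projective $B$-module, it has the form $Q\otimes_CB$ for some projective $C$-module $Q$, hence $\Omega_BM|_C\cong Q\oplus(Q\otimes_CE)$.

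The final step is a Krull--Schmidt comparison of the two descriptions, producing $\Omega_CM\oplus(P_0\otimes_CE)\cong Q\oplus(Q\otimes_CE)$ in $\mathop{\text{mod}}C$. By Lemma~\ref{SS, Lemma 4.4}, every nonzero indecomposable summand of $P_0\otimes_CE$ and of $Q\otimes_CE$ has projective dimension exactly $2$ in $\mathop{\text{mod}}C$, while $Q$ is projective and $\Omega_CM$ has projective dimension $1$ (forced by $\pd_CM=2$, since otherwise $\Omega_CM$ would be projective and $\pd_CM\leq 1$). Hence $\Omega_CM$ contains at least one indecomposable summand of projective dimension exactly $1$, which by the uniqueness of Krull--Schmidt decompositions cannot match any summand on the right-hand side (whose summands all have projective dimension $0$ or $2$). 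This contradiction establishes $\pd_BM\geq 2$. The main obstacle I anticipate is the second paragraph: correctly identifying $\Omega_BM$ as the kernel of a genuine $B$-projective cover of $M$ itself (not merely of $M\otimes_CB$) and verifying that the snake-lemma SES splits upon restriction to $\mathop{\text{mod}}C$; once this identification is in place, Lemma~\ref{SS, Lemma 4.4} forces the dimension count that closes the argument.
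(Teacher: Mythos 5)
Your argument takes a genuinely different route from the paper's. The paper deduces $\pd_B(M\otimes_CB)\geq 2$ from Lemma~\ref{AM, Projective} (since $\pd_CM=2>1$), converts this via Lemma~\ref{Blue 115} into a nonzero morphism $DB\rightarrow\tau_B(M\otimes_CB)$, and composes with the inclusion $\tau_B(M\otimes_CB)\hookrightarrow\tau_BM$ of Lemma~\ref{AR submodule} to get $\Hom_B(DB,\tau_BM)\neq0$, hence $\pd_BM\geq2$; all three ingredients are valid for an arbitrary split-by-nilpotent extension. The steps you flagged as risky are in fact fine: ruling out $\pd_BM=0$, the identification of the $B$-projective cover of $M$, the snake-lemma sequence $0\rightarrow P_0\otimes_CE\rightarrow\Omega_BM\rightarrow\Omega_CM\rightarrow0$, its splitting upon restriction to $\mathop{\text{mod}}C$ (because $P_0\otimes_CB\rightarrow M$ factors through the $C$-split projection onto $P_0$), and the description $\Omega_BM|_C\cong Q\oplus(Q\otimes_CE)$ are all correct, the last resting on the standard fact that projective $B$-modules are induced from projective $C$-modules, which you should justify by idempotent lifting.

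The genuine gap is at the final Krull--Schmidt comparison. The proposition is stated for an \emph{arbitrary} nilpotent bimodule $E$, and the paper's proof works in that generality. You invoke Lemma~\ref{SS, Lemma 4.4} to say that every nonzero indecomposable summand of $P_0\otimes_CE$ and of $Q\otimes_CE$ has projective dimension exactly $2$; but that lemma is only available for $E=\Ext_C^2(DC,C)$, since it rests on the identification $M\otimes_CE\cong\tau_C^{-1}\Omega_C^{-1}M$ of Proposition~\ref{SS Prop 4.1}. For a general nilpotent bimodule $E$ nothing prevents $Q\otimes_CE$ from having an indecomposable summand of projective dimension $1$, which could match the offending summand of $\Omega_CM$ and the contradiction evaporates. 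So as written your proof establishes the proposition only in the relation-extension case $B=C\ltimes\Ext_C^2(DC,C)$ --- enough for Theorem~\ref{Main Homo}, but not for the statement in its stated generality nor for its use toward the corollary that $\mathop{\text{gl.dim}}B>\mathop{\text{gl.dim}}C$ for general split extensions. Either restrict the hypothesis to the relation extension, or replace the last step by the paper's $\tau_B$-argument.
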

 \begin{proof}
 By Lemma $\ref{AM, Projective}$, we have $\pd_B(M\otimes_CB)\geq2$.  This implies the existence of a non-zero morphism $f\!:DB\rightarrow\tau_B(M\otimes_CB)$ by Lemma $\ref{Blue 115}$.  By Lemma $\ref{AR submodule}$, we have an injective morphism $i\!:\tau_B(M\otimes_CB)\rightarrow\tau_BM$.  Thus, there is a non-zero morphism $i\circ f\!:DB\rightarrow\tau_BM$.  By Lemma $\ref{Blue 115}$ again, we have $\pd_BM\geq1$.   
 \end{proof}
 The case where the projective dimension of $M$ is equal to 1 is the most restrictive.  
 \begin{prop} 
  \label{1 case pd}
  Let M be a C-module with $\pd_CM=1$ and a minimal projective resolution $0\rightarrow P_1\rightarrow P_0\rightarrow M\rightarrow 0$ in $\mathop{\emph{mod}}C$.  Then $\id_CM\leq1$ and $\tau_C^{-1}\Omega^{-1}P_1\cong\tau_C^{-1}\Omega^{-1}P_0$ if and only if $\pd_BM=1$.
 \end{prop}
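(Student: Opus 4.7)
Plan. The central object is the minimal projective resolution of $M$ in $\mathop{\text{mod}}B$. Since $E\subseteq\text{rad}\,B$ annihilates $M$, the top of $M$ in $\mathop{\text{mod}}B$ agrees with its top in $\mathop{\text{mod}}C$, so by Lemma \ref{Last One} its projective cover in $\mathop{\text{mod}}B$ is $P_0\otimes_C B$. Factoring $P_0\otimes_C B\twoheadrightarrow M$ through $P_0\twoheadrightarrow M$ yields the syzygy sequence $0\to P_0\otimes_C E\to\Omega_B M\to P_1\to 0$ in $\mathop{\text{mod}}B$; this is the common starting point for both implications.

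For the ``if'' direction, $\id_C M\leq 1$ combined with Proposition \ref{SS, Proposition 4.2} gives $M\otimes_C B\cong M$ and in particular $M\otimes_C E=0$, so Lemma \ref{Projective Resolution} realizes the minimal $B$-presentation of $M$ as $P_1\otimes_C B\xrightarrow{\phi\otimes 1_B}P_0\otimes_C B\to M\to 0$. Injectivity of the left map is equivalent to $\text{Tor}_1^C(M,B)=0$, which I would deduce in two moves: first, tensoring the $C$-resolution with $E$ gives a surjection $P_1\otimes_C E\twoheadrightarrow P_0\otimes_C E$ whose kernel is $\text{Tor}_1^C(M,E)$, and the dimension equality forced by $\tau_C^{-1}\Omega_C^{-1}P_1\cong\tau_C^{-1}\Omega_C^{-1}P_0$ upgrades this surjection to an isomorphism; second, the long exact sequence for $0\to E\to B\to C\to 0$ identifies $\text{Tor}_1^C(M,B)\cong\text{Tor}_1^C(M,E)$, which is now zero. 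The inequality $\pd_B M\neq 0$ is forced by Lemma \ref{SS, Lemma 4.4}: every projective $B$-module restricts to a $C$-module with $\pd_C\in\{0,2\}$, incompatible with $\pd_C M=1$.

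For the ``only if'' direction, assume $\pd_B M=1$, so $\Omega_B M$ is projective. The syzygy sequence splits in $\mathop{\text{mod}}C$ (as $P_1$ is $C$-projective), giving $\Omega_B M\cong P_1\oplus(P_0\otimes_C E)$, while any projective $B$-module restricts to $\bigoplus_i(P_i\oplus P_i\otimes_C E)$; since Lemma \ref{SS, Lemma 4.4} forbids projective $C$-summands inside the $P_j\otimes_C E$ pieces, Krull--Schmidt forces $\bigoplus_i P_i\cong P_1$ and $\bigoplus_i P_i\otimes_C E\cong P_0\otimes_C E$. Hence $\Omega_B M\cong P_1\otimes_C B$ in $\mathop{\text{mod}}B$, and the isomorphism $\tau_C^{-1}\Omega_C^{-1}P_0\cong\tau_C^{-1}\Omega_C^{-1}P_1$ follows.

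The remaining and subtlest step is $\id_C M\leq 1$. I would write the minimal $B$-embedding $\alpha:P_1\otimes_C B\hookrightarrow P_0\otimes_C B$ in the $C$-block form relative to $P_i\otimes_C B\cong P_i\oplus(P_i\otimes_C E)$: the constraints $E^2=0$ and $B$-equivariance pin $\alpha$ down to the shape $\bigl(\begin{smallmatrix}a_{11}&0\\a_{21}&a_{11}\otimes 1_E\end{smallmatrix}\bigr)$, and tensoring $\alpha\otimes_B C$ identifies $a_{11}$ with $\phi$ up to an automorphism of $P_1$. Applying the snake lemma to the natural ladder of rows $0\to P_i\otimes_C E\to P_i\otimes_C B\to P_i\to 0$ with vertical maps $(\phi\otimes 1_E,\alpha,\phi)$ produces $0\to M\otimes_C E\to M\to M\to 0$, where the right-hand map is a surjection. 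Since a self-surjection of a finite-dimensional module is an isomorphism, $M\otimes_C E=0$, and Proposition \ref{SS, Proposition 4.2} concludes $\id_C M\leq 1$. This block-form plus snake-lemma maneuver is the main obstacle; once set up, it is the cleanest way I see to extract $M\otimes_C E=0$ from the existence of a length-one $B$-resolution.
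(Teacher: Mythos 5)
Your proof is correct, and while your ``if'' direction is essentially the paper's argument in different packaging, your ``only if'' direction takes a genuinely different route. For the ``if'' direction both proofs reduce to the same computation: tensor the minimal $C$-resolution with $E$, use $M\otimes_CE=0$ and $P_1\otimes_CE\cong P_0\otimes_CE$ to force $\mathrm{Tor}_1^C(M,E)=0$; the paper then feeds this through the Auslander--Reiten formulas into Lemma~\ref{AM, Projective}, whereas you convert it directly into $\mathrm{Tor}_1^C(M,B)=0$ and hence injectivity of $P_1\otimes_CB\to P_0\otimes_CB$, which is slightly more elementary (and you also explicitly rule out $\pd_BM=0$ via Lemma~\ref{SS, Lemma 4.4}, a point the paper leaves implicit). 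For the converse, the paper argues through the induced module: it uses Lemma~\ref{AR submodule} and Lemma~\ref{Blue 115} to show $\pd_B(M\otimes_CB)=1$, extracts $\mathrm{Tor}_1^C(M,E)=0$ from Lemma~\ref{AM, Projective}, and then kills $\tau_C^{-1}\Omega_C^{-1}M$ by playing the exact sequence $0\to\tau_C^{-1}\Omega_C^{-1}M\to M\otimes_CB\to M\to0$ against Lemma~\ref{Standard Projective Restrictions}, Lemma~\ref{SS, Lemma 4.4} and Proposition~\ref{2 case pd}. You instead work with the first $B$-syzygy itself: the sequence $0\to P_0\otimes_CE\to\Omega_BM\to P_1\to0$, Krull--Schmidt together with Lemma~\ref{SS, Lemma 4.4} (projective summands cannot mix with the $\pd_C=2$ summands of $-\otimes_CE$) to identify $\Omega_BM\cong P_1\otimes_CB$ and get $P_0\otimes_CE\cong P_1\otimes_CE$, and then the block-triangular form of the embedding plus the snake lemma to produce $0\to M\otimes_CE\to M\to M\to0$ and conclude $M\otimes_CE=0$ by dimension count. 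Your version is more self-contained --- it avoids Lemma~\ref{AM, Projective}, Lemma~\ref{AR submodule} and the Auslander--Reiten formulas entirely and makes the minimal $B$-resolution of $M$ completely explicit --- at the cost of the block-form bookkeeping; the paper's version is shorter because it leans on the induced-module machinery already assembled in Section~2.
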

 \begin{proof}
 Assume $\id_CM\leq1$ and $\tau_C^{-1}\Omega_C^{-1}P_1\cong\tau_C^{-1}\Omega_C^{-1}P_0$.  Since $\id_CM\leq1$, by  Proposition $\ref{SS, Proposition 4.2}$, we have $M\otimes_CE\cong\tau_C^{-1}\Omega_C^{-1}M=0$ and $M\otimes_CB\cong M$.  Using Lemma $\ref{AM, Projective}$, we need to to show $\text{Hom}_C(DE,\tau_CM)=0$.  Apply ${-}\otimes_CE$ to the minimal projective resolution of $M$ to obtain the exact sequence 
 \begin{equation}\tag{1}
 \text{Tor}_1^C(P_1,E)\rightarrow\text{Tor}_1^C(M,E)\rightarrow P_1\otimes_CE\rightarrow P_0\otimes_CE\rightarrow M\otimes_CE\rightarrow 0.  
 \end{equation}
 Now, $\text{Tor}_1^C(P_1,E)=0$ because $P_1$ is projective and we showed $M\otimes_CE=0$.  Also, Proposition $\ref{SS, Proposition 4.2}$ says $P_1\otimes_CE\cong\tau_C^{-1}\Omega_C^{-1}P_1\cong\tau_C^{-1}\Omega_C^{-1}P_0\cong P_0\otimes_CE.$  Since (1) is exact, we know $\text{Tor}_1^C(M,E)=0$.  By Proposition $\ref{Tor}$ and the Auslander-Reiten formulas, we have 
 \[
 0=\text{Tor}_1^C(M,E)\cong D\text{Ext}_C^1(M,DE)\cong\overline{\text{Hom}}_C(DE, \tau_CM).  
 \]
 Since $\pd_CM=1$ by assumption, we may use Lemma $\ref{Blue 115}$ and the Auslander-Reiten formulas to say 
 \[
 0=\overline{\text{Hom}}_C(DE,\tau_CM)\cong\text{Hom}_C(DE,\tau_CM).
\]
\par
 Conversely, assume $\pd_BM=1$.  If $\pd_B(M\otimes_CB)>1$ then we have a non-zero composition of morphisms, $DB\rightarrow\tau_B(M\otimes_CB)\rightarrow\tau_BM$, guaranteed by Lemma $\ref{Blue 115}$ and Lemma $\ref{AR submodule}$ .  By Lemma $\ref{Blue 115}$, this contradicts $\pd_BM=1$.  Thus, $\pd_B(M\otimes_CB)=1$ and Proposition $\ref{Tor}$, Lemma $\ref{AM, Projective}$, the Auslander-Reiten formulas, and Lemma $\ref{Blue 115}$ imply 
 \[
 0=\text{Hom}_C(DE,\tau_CM)\cong D\text{Ext}_C^1(M,DE)\cong\text{Tor}_1^C(M,E).  
 \]
 Next, consider the short exact sequence of Propositions $\ref{SS, Proposition 3.6}$ and Proposition $\ref{SS Prop 4.1}$  
 \[
 0\rightarrow\tau_C^{-1}\Omega_C^{-1}M\rightarrow M\otimes_CB\rightarrow M\rightarrow 0
 \]
  in $\mathop{\text{mod}}B$.  Since $\pd_B(M\otimes_CB)$ and $\pd_BM$ are equal to 1, we know Lemma $\ref{Standard Projective Restrictions}$ implies $\pd_B(\tau_C^{-1}\Omega_C^{-1}M)\leq1$.  By Lemma $\ref{SS, Lemma 4.4}$, we know $\pd_C(\tau_C^{-1}\Omega_C^{-1}M)=2$ or $\tau_C^{-1}\Omega_C^{-1}M=0$.  However, Proposition $\ref{2 case pd}$ implies $\text{pd}_B(\tau_C^{-1}\Omega_C^{-1}M)\geq2$.  Thus, $\tau_C^{-1}\Omega_C^{-1}M=0$ and $M\otimes_CB\cong M$.  Returning to sequence (1), since $M\otimes_CB\cong M$ we have $M\otimes_CE=0$.  Also, we have shown that $\text{Tor}_1^C(M,E)=0$.  Since the sequence is exact, we have $P_1\otimes_CE\cong P_0\otimes_CE$ and Proposition $\ref{SS Prop 4.1}$ implies $\tau_C^{-1}\Omega_C^{-1}P_1\cong\tau_C^{-1}\Omega_C^{-1}P_0$.  Finally, since $M\otimes_CE=0$, Proposition $\ref{SS, Proposition 4.2}$ tells us that $\id_CM\leq1$.    
 \end{proof}

 If $M$ is a $C$-module which satisfies the conditions of Proposition $\ref{1 case pd}$, then the following corollary tells us what a minimum projective resolution is in $\mathop{\text{mod}}B$.
 \begin{cor}
 \label{Resolution}
 Let M be a C-module with minimal projective resolution 
 \[
 0\rightarrow P_1\xrightarrow{f_1} P_0\xrightarrow{f_0} M\rightarrow 0.
 \]
 If $\pd_BM=1$, then $0\rightarrow P_1\otimes_CB\xrightarrow{f_1\otimes_C1_B} P_0\otimes_CB\xrightarrow{f_0\otimes_C1_B} M\rightarrow 0$ is a minimal projective resolution in $\mathop{\emph{mod}}B$.
 \end{cor}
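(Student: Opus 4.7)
The plan is to apply Lemma \ref{Projective Resolution} to transfer the minimal projective presentation to $\mathop{\text{mod}}B$, and then promote it to a minimal projective \emph{resolution} by verifying that the induced left-hand map is injective.

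By Lemma \ref{Projective Resolution}, applying $-\otimes_CB$ to the minimal projective presentation $P_1\xrightarrow{f_1}P_0\xrightarrow{f_0}M\rightarrow 0$ in $\mathop{\text{mod}}C$ yields a minimal projective presentation
\[
P_1\otimes_CB\xrightarrow{f_1\otimes_C1_B}P_0\otimes_CB\xrightarrow{f_0\otimes_C1_B}M\otimes_CB\rightarrow 0
\]
in $\mathop{\text{mod}}B$. The proof of Proposition \ref{1 case pd} already established that $\pd_BM=1$ implies $M\otimes_CB\cong M$, so this presentation takes precisely the form claimed in the statement.

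It remains to show that $f_1\otimes_C1_B$ is injective. Let $K=\ker(f_0\otimes_C1_B)$, and consider the short exact sequence $0\rightarrow K\rightarrow P_0\otimes_CB\rightarrow M\rightarrow 0$. Since $P_0\otimes_CB$ is projective and $\pd_BM=1$, Lemma \ref{Standard Projective Restrictions}(b) gives $\pd_BK=0$, i.e.\ $K$ is projective. By minimality of the projective presentation, the induced surjection $P_1\otimes_CB\twoheadrightarrow K$ is a projective cover. The key observation is that any projective cover of a projective module must be an isomorphism: such a cover splits, making its kernel a direct summand which is also superfluous, and hence zero. Thus $f_1\otimes_C1_B$ is injective, and the minimality of the full resolution is then immediate from Lemma \ref{Projective Resolution}. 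The one point that requires care is the passage from ``minimal projective presentation'' to ``minimal projective resolution,'' handled by the standard fact that a projective cover of a projective module is an isomorphism; the rest is a bookkeeping exercise combining results already in hand.
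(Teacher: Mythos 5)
Your proof is correct and follows essentially the same route as the paper: apply Lemma \ref{Projective Resolution} to transfer the minimal projective presentation and then identify $M\otimes_CB\cong M$ from $\pd_BM=1$ (the paper routes this through $\id_CM\leq 1$ and Proposition \ref{SS, Proposition 4.2}, but the conclusion is the same). Your explicit check that $f_1\otimes_C1_B$ is injective --- via Lemma \ref{Standard Projective Restrictions}(b) and the fact that a projective cover of a projective module is an isomorphism --- supplies a detail the paper compresses into ``our statement follows.''
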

 \begin{proof}
 By Lemma $\ref{Projective Resolution}$, we know that $P_1\otimes_CB\xrightarrow{f_1\otimes_C1_B} P_0\otimes_CB\xrightarrow{f_0\otimes_C1_B} M\otimes_CB\rightarrow 0$ is a minimal projective presentation of $M\otimes_CB$ in $\mathop{\text{mod}}B$.  By Proposition $\ref{1 case pd}$, we know $\id_CM\leq1$.  By Proposition $\ref{SS, Proposition 4.2}$ we have $M\otimes_CB\cong M$ and our statement follows.
 \end{proof}
 In the situation where $C$ is an algebra of global dimension 2 and $B$ is a split extension by a nilpotent bimodule $E$, we prove that the global dimension of $B$ is strictly greater then the global dimension of $C$.  We need a lemma.
 \begin{lemma}
 \label{whatever}
 Let M be a projective $C$-module such that $\id_CM=2$.  Then 
 \[
 \pd_BM=\pd_B(\tau_C^{-1}\Omega_C^{-1}M)+1\geq3.
 \]
 \end{lemma}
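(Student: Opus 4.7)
The plan is to read off the conclusion from the short exact sequence provided by Proposition \ref{SS, Proposition 3.6}(a), using the fact that $M$ being projective makes the middle term particularly nice.

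First, I would combine Proposition \ref{SS, Proposition 3.6}(a) with Proposition \ref{SS Prop 4.1}(c) to obtain the short exact sequence
\[
0 \rightarrow \tau_C^{-1}\Omega_C^{-1}M \rightarrow M\otimes_C B \rightarrow M \rightarrow 0
\]
in $\mathop{\text{mod}}B$. Because $M$ is a projective $C$-module, Lemma \ref{Last One} says that $M\otimes_C B$ is a projective cover of $M$ in $\mathop{\text{mod}}B$; in particular it is projective as a $B$-module, so $\pd_B(M\otimes_C B) = 0$.

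Next I would analyze $\tau_C^{-1}\Omega_C^{-1}M$. Since $\id_C M = 2$, Proposition \ref{SS, Proposition 4.2}(a) forces $M\otimes_C B \not\cong M$, which together with the displayed sequence gives $\tau_C^{-1}\Omega_C^{-1} M \neq 0$. Lemma \ref{SS, Lemma 4.4}(a) applied to the nonzero module $\tau_C^{-1}\Omega_C^{-1}M = M\otimes_C E$ then yields $\pd_C(\tau_C^{-1}\Omega_C^{-1}M) = 2$, and Proposition \ref{2 case pd} upgrades this to $\pd_B(\tau_C^{-1}\Omega_C^{-1}M) \geq 2$.

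Finally I would feed these values into Lemma \ref{Standard Projective Restrictions}(a) applied to the displayed short exact sequence, taking $L = \tau_C^{-1}\Omega_C^{-1}M$, the middle term equal to $M\otimes_C B$, and $N = M$. Since $\pd_B(M\otimes_C B) = 0$ while $\pd_B(\tau_C^{-1}\Omega_C^{-1}M) \geq 2$, the two are unequal, so the ``equality holds if'' clause of the lemma triggers and produces
\[
\pd_B M = 1 + \pd_B(\tau_C^{-1}\Omega_C^{-1}M) \geq 3,
\]
which is exactly the claim. The only subtlety, and the one place a reader might hesitate, is checking that $\tau_C^{-1}\Omega_C^{-1}M$ is nonzero (so that Lemma \ref{SS, Lemma 4.4}(a) actually applies); this is where the hypothesis $\id_C M = 2$ (rather than $\leq 1$) gets used, through Proposition \ref{SS, Proposition 4.2}(a).
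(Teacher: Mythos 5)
Your proposal is correct and follows essentially the same route as the paper: the short exact sequence $0\to\tau_C^{-1}\Omega_C^{-1}M\to M\otimes_CB\to M\to 0$, projectivity of $M\otimes_CB$, the bound $\pd_B(\tau_C^{-1}\Omega_C^{-1}M)\geq 2$ via Proposition \ref{2 case pd}, and Lemma \ref{Standard Projective Restrictions}(a). You actually supply a detail the paper leaves implicit, namely the verification via Proposition \ref{SS, Proposition 4.2}(a) and Lemma \ref{SS, Lemma 4.4}(a) that $\tau_C^{-1}\Omega_C^{-1}M$ is nonzero of projective dimension $2$ over $C$, which is needed before Proposition \ref{2 case pd} can be invoked.
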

 \begin{proof}
 Consider the short exact sequence $0\rightarrow \tau_C^{-1}\Omega_C^{-1}M\rightarrow M\otimes_CB\rightarrow M\rightarrow 0$ guaranteed by  Proposition $\ref{SS, Proposition 3.6}$ and Proposition $\ref{SS Prop 4.1}$.  We have $\text{pd}_B(M\otimes_CB)=0$ and $\pd_B(\tau_C^{-1}\Omega_C^{-1}M)\geq2$ by  Proposition $\ref{2 case pd}$.  Our statement then follows from Lemma $\ref{Standard Projective Restrictions}$.
 \end{proof}
 \begin{cor}
 Let C be an algebra of global dimension 2 and B a split extension by a nilpotent bimodule E.  Then $\mathop{\emph{gl.dim.}}B > \mathop{\emph{gl.dim.}}C.$
 \end{cor}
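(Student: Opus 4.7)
The plan is to exhibit a single $B$-module whose projective dimension is at least $3$; this immediately gives $\mathop{\emph{gl.dim.}}B \geq 3 > 2 = \mathop{\emph{gl.dim.}}C$. By Lemma \ref{whatever}, any projective $C$-module $P$ with $\id_C P = 2$ satisfies $\pd_B P \geq 3$, so the task reduces to producing such a $P$.

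To construct $P$, I would start from any $C$-module $M$ with $\pd_C M = 2$ (such an $M$ exists since $\mathop{\emph{gl.dim.}}C = 2$) and take a minimal projective resolution
\[
0 \to P_2 \xrightarrow{d_2} P_1 \to P_0 \to M \to 0
\]
in $\mathop{\text{mod}}C$. Minimality together with $\pd_C M = 2$ guarantees that $d_2$ is not a split monomorphism: a splitting would force $\Omega_C M$ to be a direct summand of $P_1$, hence projective, contradicting $\pd_C M = 2$. Applying $\Hom_C(-,P_2)$ to the resolution, the identity of $P_2$ then fails to lie in the image of $d_2^{*} = \Hom_C(d_2,P_2)$, so it represents a nonzero class in $\Ext_C^2(M,P_2)$. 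Hence $\id_C P_2 \geq 2$, and combined with $\id_C P_2 \leq \mathop{\emph{gl.dim.}}C = 2$ this forces $\id_C P_2 = 2$.

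With $P_2$ now a projective $C$-module of injective dimension $2$, Lemma \ref{whatever} gives $\pd_B P_2 \geq 3$ and the corollary follows. The only step beyond routine bookkeeping is recognizing that the last nonzero term of a minimal projective resolution of a module realizing the maximal projective dimension is itself a projective whose injective dimension realizes the global dimension; once that observation is in place, everything else is immediate from results already established.
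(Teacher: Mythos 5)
Your proof is correct and takes the same route as the paper: the paper's entire argument is ``this follows immediately from Lemma~\ref{whatever},'' which tacitly assumes the existence of a projective $C$-module of injective dimension $2$. Your construction of $P_2$ as the last term of a minimal projective resolution of a module of projective dimension $2$, together with the observation that non-splitness of $d_2$ makes the class of $\mathrm{id}_{P_2}$ nonzero in $\Ext_C^2(M,P_2)$ and hence $\id_C P_2=2$, supplies exactly the detail the paper leaves implicit, and is sound.
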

 \begin{proof}
 This follows immediately from Lemma $\ref{whatever}$.
 \end{proof}
 We conclude this section with a complete classification of the projective dimension of a $C$-module when viewed as a $B$-module in the special case $C$ is tilted and $B$ is the corresponding cluster-titled algebra.
 \begin{theorem}
 \label{Main Homo}
 Let C be a tilted algebra, $E=\emph{Ext}_C^2(DC,C)$, and $B=C\ltimes E$ the corresponding cluster-tilted algebra.
 \begin{enumerate}
 \item[$\emph{(a)}$] If $\pd_CM=0$, then $\pd_BM=0$ if and only if $\id_CM\leq1$.  Otherwise, $\pd_BM=\infty$.
 \item[$\emph{(b)}$] If $\pd_CM=2$, then $\pd_BM=\infty$.
 \item[$\emph{(c)}$] Let $\pd_CM=1$ with minimal projective resolution $0\rightarrow P_1\rightarrow P_0\rightarrow M\rightarrow 0$.  Then $\pd_BM=1$ if and only if $\id_CM\leq1$ and $\tau_C^{-1}\Omega_C^{-1}P_0\cong\tau_C^{-1}\Omega_C^{-1}P_1$.  Otherwise, $\pd_BM=\infty$.
 \end{enumerate}
 \end{theorem}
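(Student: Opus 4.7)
The plan is to reduce the theorem to the three propositions already established in this section (Propositions \ref{0 case pd}, \ref{2 case pd}, and \ref{1 case pd}) by invoking the $1$-Gorenstein property of cluster-tilted algebras (Theorem \ref{Gorenstein}), which forces every $B$-module to have projective dimension either at most $1$ or infinite. Since $\mathop{\text{gl.dim}}C\leq 2$ by Proposition \ref{Tilting Prop}(a), a $C$-module $M$ has $\pd_C M\in\{0,1,2\}$, matching the three cases of the theorem, and in each case what remains after the cited propositions is to verify that failure of the finite-projective-dimension characterization forces $\pd_B M=\infty$ rather than an intermediate value.

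For part (a), Proposition \ref{0 case pd} already gives the equivalence $\pd_B M=0\iff\id_C M\leq 1$ for $C$-projective $M$. If $\id_C M>1$, then $\id_C M=2$, and Lemma \ref{whatever} yields $\pd_B M\geq 3$; the $1$-Gorenstein dichotomy then upgrades this to $\pd_B M=\infty$. For part (b), Proposition \ref{2 case pd} gives $\pd_B M\geq 2$, and the same dichotomy forces $\pd_B M=\infty$.

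Part (c) requires a little more care. Proposition \ref{1 case pd} provides the ``$\pd_B M=1$'' characterization, so once the stated conditions fail we already know $\pd_B M\neq 1$, and by the $1$-Gorenstein dichotomy only $\pd_B M=0$ or $\pd_B M=\infty$ remain. The step I expect to be the main obstacle is ruling out $\pd_B M=0$, since Proposition \ref{0 case pd} is formulated only for $C$-projective modules. The plan is to use Lemma \ref{Last One} together with the $C$-$C$-bimodule decomposition $B\cong C\oplus E$ to see that each indecomposable $B$-projective is of the form $e_iB\cong e_iC\oplus e_iE$ as a right $C$-module; by Lemma \ref{SS, Lemma 4.4}(a) every nonzero indecomposable summand of $e_iE$ has $\pd_C$ equal to $2$, so each indecomposable $B$-projective has $\pd_C\in\{0,2\}$, never $1$. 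Because any direct sum decomposition of $M$ as a $C$-module is also one as a $B$-module (the $B$-action factors through the algebra projection $B\to C$), the existence of an indecomposable summand of $M$ with $\pd_C=1$ prevents $M$ from being $B$-projective, forcing $\pd_B M>0$ and hence $\pd_B M=\infty$, completing the classification.
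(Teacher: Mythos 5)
Your proposal is correct and follows the paper's proof almost verbatim: parts (a) and (b) are exactly the paper's argument (Proposition \ref{0 case pd} plus Lemma \ref{whatever} and the $1$-Gorenstein dichotomy for (a); Proposition \ref{2 case pd} plus the dichotomy for (b)), and part (c) likewise rests on Proposition \ref{1 case pd} together with Theorem \ref{Gorenstein}. The one place you go beyond the paper is also the right place to do so: the paper's proof of (c) silently assumes that $\pd_BM=0$ cannot occur when $\pd_CM=1$, whereas you supply an explicit argument, and your argument is sound --- the $B$-action on $M$ factors through $B\to C$, so the indecomposable decompositions over $B$ and $C$ agree, and by Lemmas \ref{Last One} and \ref{SS, Lemma 4.4}(a) every indecomposable $C$-direct summand of a projective $B$-module has $\pd_C$ equal to $0$ or $2$, never $1$. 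A marginally shorter route to the same conclusion is to note that by Lemma \ref{My Projective Cover} the projective cover of $M$ in $\mathop{\text{mod}}B$ is $P_0\otimes_CB$, so $\pd_BM=0$ would force $M\cong P_0\otimes_CB\cong P_0\oplus(P_0\otimes_CE)$ as $C$-modules, again contradicting $\pd_CM=1$ via Lemma \ref{SS, Lemma 4.4}(a); either way, your treatment is complete where the paper's is elliptical.
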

 \begin{proof}
 Part (a) follows from Proposition $\ref{0 case pd}$.  If the conditions for $M$ are not met, then Lemma $\ref{whatever}$ and the 1-Gorenstein property of a cluster-tilted algebra, (Theorem $\ref{Gorenstein}$), shows $\pd_BM=\infty$.  Part (b) follows from  Proposition $\ref{2 case pd}$ and the 1-Gorenstein property.  Finally, part (c) follows from Proposition $\ref{1 case pd}$ and the 1-Gorenstein property. 
 \end{proof}
 For an illustration of this theorem, see Examples $\ref{Homofirst}$, $\ref{Homosecond}$, and $\ref{Homothird}$ in section 5.
 \section{Extensions}
 In this section, we study $C$-modules which have no self-extension, i.e., Ext$_C^1(M,M)=0$.  These modules are typically referred to as rigid modules.  We investigate under what conditions does a rigid $C$-module remain a rigid $B$-module.  Unless otherwise stated, we assume that $C$ is an algebra of global dimension 2 and $B=C\ltimes E$ is a split extension by a nilpotent bimodule $E$.  To prove our main result we first need an easy lemma.  We recall from Lemma $\ref{Last One}$ that if $P$ is a projective $C$-module, then $P\otimes_CB$ is a projective cover of $P$ in $\mathop{\text{mod}}B$. 
 \begin{lemma}
 \label{My Projective Cover}
 Let $M$ be a $C$-module with $f\!:P_0\rightarrow M$ a projective cover in $\mathop{\emph{mod}}C$.  Suppose $g\!:P_0\otimes_CB\rightarrow P_0$ is a projective cover of $P_0$ in $\mathop{\emph{mod}}B$.  Then $f\circ g\!:P_0\otimes_CB\rightarrow M$ is a projective cover of $M$ in $\mathop{\emph{mod}}B$.
 \end{lemma}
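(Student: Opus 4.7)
The plan is to view the statement as a direct application of Corollary \ref{Superfluous}, combined with the general fact that induction from $\mathop{\text{mod}} C$ to $\mathop{\text{mod}} B$ preserves projectivity. In outline, I would verify that $f$ remains a minimal epimorphism when passed to $\mathop{\text{mod}} B$, check that $P_0 \otimes_C B$ is projective in $\mathop{\text{mod}} B$, and then compose $g$ (minimal by hypothesis) with $f$ using Corollary \ref{Superfluous}.

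The key observation is the following. Since $B = C \ltimes E$ is a split extension, the algebra surjection $\pi\!: B \to C$ turns every $C$-module into a $B$-module whose $B$-action factors through $C$. Hence the $B$-submodules of a $C$-module (regarded in $\mathop{\text{mod}} B$) coincide with its $C$-submodules, and every $C$-linear map between such modules is automatically $B$-linear. In particular $f\!:P_0 \to M$ is an epimorphism in $\mathop{\text{mod}} B$, and $\ker f$ is superfluous in $P_0$ as a $B$-submodule if and only if it is superfluous as a $C$-submodule. Thus $f$ is minimal in $\mathop{\text{mod}} B$.

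For the projective source of the composition, writing $P_0$ as a direct summand of some $C^n$ exhibits $P_0 \otimes_C B$ as a direct summand of $B^n$, so $P_0 \otimes_C B$ is a projective $B$-module. The map $g$ is a minimal epimorphism by hypothesis (being a projective cover), and $f$ is a minimal epimorphism in $\mathop{\text{mod}} B$ by the previous paragraph. Applying Corollary \ref{Superfluous} to $f \circ g$, I conclude that the composition is a minimal epimorphism from a projective $B$-module onto $M$, which is the definition of a projective cover in $\mathop{\text{mod}} B$. I do not anticipate any serious obstacle: the lemma reduces to a short bookkeeping argument once Corollary \ref{Superfluous} is available, and the only subtle point is the (essentially formal) identification of $B$-submodules with $C$-submodules for modules coming from $\mathop{\text{mod}} C$.
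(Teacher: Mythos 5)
Your proposal is correct and follows essentially the same route as the paper: both reduce the statement to Corollary \ref{Superfluous} applied to the composition of the two minimal epimorphisms $f$ and $g$, with the projectivity of $P_0\otimes_CB$ supplied by Lemma \ref{Last One}. The only difference is that you explicitly justify why $f$ remains a minimal epimorphism in $\mathop{\text{mod}}B$ (via the identification of $B$-submodules with $C$-submodules for modules whose action factors through $C$), a point the paper's proof leaves implicit.
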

 \begin{proof}
 Clearly, $f\circ g$ is surjective.  Thus, we need to show $\ker f\circ g$ is superfluous.  This follows easily from Corollary $\ref{Superfluous}$ since $f$ and $g$ are both minimal.
 \end{proof} 

\begin{theorem} 
 \label{Main Ext}
 Let M be a rigid C-module with a projective cover $P_0\rightarrow M$ and an injective envelope $M\rightarrow I_0$ in $\mathop{\emph{mod}}C$.  
 \begin{enumerate}
 \item[$\emph{(a)}$] If $\emph{Hom}_C(\tau_C^{-1}\Omega^{-1}P_0,M)=0$, then M is a rigid B-module.
 \item[$\emph{(b)}$] If $\emph{Hom}_C(M,\tau_C\Omega I_0)=0$, then M is a rigid B-module.
 \end{enumerate}
 \end{theorem}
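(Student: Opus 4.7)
The strategy is to compute $\Ext_B^1(M,M)$ from the projective cover of $M$ in $\mathop{\text{mod}}B$ (for (a)) or the injective envelope of $M$ in $\mathop{\text{mod}}B$ (for (b)); in each case the hypothesis kills the only obstruction beyond $\Ext_C^1$, so the assumed rigidity in $\mathop{\text{mod}}C$ finishes the job.

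For (a), let $f\colon P_0\to M$ be the given projective cover in $\mathop{\text{mod}}C$. By Lemma \ref{Last One}, $\pi\colon P_0\otimes_C B\to P_0$ is a projective cover in $\mathop{\text{mod}}B$ with kernel $P_0\otimes_C E\cong\tau_C^{-1}\Omega_C^{-1}P_0$ (Proposition \ref{SS Prop 4.1}(c)); by Lemma \ref{My Projective Cover} the composition $f\circ\pi\colon P_0\otimes_C B\to M$ is then a projective cover of $M$ in $\mathop{\text{mod}}B$. Let $K$ be its kernel. The kernel-of-a-composition sequence yields a short exact sequence of $B$-modules
\[
0\to\tau_C^{-1}\Omega_C^{-1}P_0\to K\to\Omega_C M\to 0,
\]
whose outer terms carry the trivial $E$-action (as can be checked from $(p\otimes b)\cdot(0,e)=p\otimes(b(0,e))$ combined with $E\cdot E=0$ in $B$). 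Applying $\Hom_B(-,M)$ to $0\to K\to P_0\otimes_C B\to M\to 0$ and using projectivity of $P_0\otimes_C B$ shows that $\Ext_B^1(M,M)=\coker\bigl(\Hom_B(P_0\otimes_C B,M)\to\Hom_B(K,M)\bigr)$.

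Since $\Hom_B(X,M)=\Hom_C(X,M)$ for any $C$-module $X$ carrying the standard trivial-$E$ $B$-structure, applying $\Hom_B(-,M)$ to the displayed sequence produces
\[
0\to\Hom_C(\Omega_C M,M)\to\Hom_B(K,M)\to\Hom_C(\tau_C^{-1}\Omega_C^{-1}P_0,M),
\]
and the hypothesis of (a) kills the last term, giving $\Hom_B(K,M)\cong\Hom_C(\Omega_C M,M)$. Combined with the adjunction $\Hom_B(P_0\otimes_C B,M)\cong\Hom_C(P_0,M)$, a direct diagram chase shows the restriction map identifies with the natural $\Hom_C(P_0,M)\to\Hom_C(\Omega_C M,M)$. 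Applying $\Hom_C(-,M)$ to $0\to\Omega_C M\to P_0\to M\to 0$ shows this map is surjective precisely because $\Ext_C^1(M,M)=0$, and we conclude $\Ext_B^1(M,M)=0$.

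Part (b) is dual: one first records that for an injective $C$-module $I$, the coinduced module $D(B\otimes_C DI)$ is the injective envelope of $I$ in $\mathop{\text{mod}}B$ (dual of Lemma \ref{Last One}), and that the composition $M\hookrightarrow I_0\hookrightarrow D(B\otimes_C DI_0)$ is then an injective envelope of $M$ in $\mathop{\text{mod}}B$ (dual of Lemma \ref{My Projective Cover}, using essential submodules in place of superfluous ones). Letting $L$ denote its cokernel, Proposition \ref{SS Prop 4.1}(d) yields the dual sequence $0\to\Omega_C^{-1}M\to L\to\tau_C\Omega_C I_0\to 0$, and applying $\Hom_B(M,-)$ together with $\Hom_C(M,\tau_C\Omega_C I_0)=0$ and $\Ext_C^1(M,M)=0$ yields $\Ext_B^1(M,M)=0$ by the same logic as in (a). The main obstacle is the bookkeeping of the $B$-module structures: verifying that $K$ and $L$ really fit into the two claimed short exact sequences of $B$-modules with the expected trivial $E$-action on the outer terms, and that the restriction maps in $\mathop{\text{mod}}B$ correspond under the identifications to the natural restrictions in $\mathop{\text{mod}}C$; once this is done, each hypothesis eliminates the only piece of the long exact sequence that is not controlled by the assumed rigidity over $C$.
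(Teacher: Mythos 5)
Your proposal is correct and follows essentially the same route as the paper: you form the projective cover $P_0\otimes_C B\to M$ in $\mathop{\text{mod}}B$ via Lemma \ref{My Projective Cover}, extract the short exact sequence $0\to\tau_C^{-1}\Omega_C^{-1}P_0\to\Omega_B M\to\Omega_C M\to 0$ (the paper gets it from the Snake Lemma applied to the comparison diagram, you from the kernel-of-a-composition sequence, which is the same thing), and then use the hypothesis to identify $\Hom_B(\Omega_B M,M)$ with $\Hom_C(\Omega_C M,M)$ so that rigidity over $C$ forces the connecting restriction map to be surjective. The only differences are presentational (cokernel bookkeeping versus the paper's explicit factorization $h=l\circ w\circ f$), so there is nothing substantive to flag.
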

 \begin{proof}
 We prove case (a) with case (b) being dual.  In $\mathop{\text{mod}}B$, consider the following short exact sequence of $M$
 \[
 0\rightarrow\Omega_B^1M\xrightarrow{f}P_0\otimes_CB\rightarrow M\rightarrow 0.
 \]
 Apply $\text{Hom}_B(-,M)$ to obtain
 \begin{equation}\tag{1}
 0\rightarrow\text{Hom}_B(M,M)\rightarrow\text{Hom}_B(P_0\otimes_CB,M)\xrightarrow{\overline{f}}\text{Hom}_B(\Omega_B^1M,M)\rightarrow\text{Ext}_B^1(M,M)\rightarrow\ 0.
 \end{equation}
 Since (1) is exact, we need to show that $\overline{f}$ is surjective.  This will imply that $\text{Ext}_B^1(M,M)=0$.  In $\mathop{\text{mod}}C$, consider the sequence
 \[
 0\rightarrow\Omega_C^1M\xrightarrow{g} P_0\xrightarrow{a} M\rightarrow 0.
 \]
 Apply $\text{Hom}_C(-,M)$ to obtain 
 \begin{equation}\tag{2}
 0\rightarrow\text{Hom}_C(M,M)\rightarrow\text{Hom}_C(P_0,M)\xrightarrow{\overline{g}}\text{Hom}_C(\Omega_C^1M,M)\rightarrow\text{Ext}_C^1(M,M).
 \end{equation}
 Since $M$ is a rigid $C$-module by assumption and (2) is exact, we have $\overline{g}$ is surjective.  Next, in $\mathop{\text{mod}}B$, consider the following commutative diagram guaranteed by Lemma $\ref{My Projective Cover}$ and the universal property of the kernel.
 \begin{equation}\tag{3}
\begin{tikzcd}
0 \arrow{r} & \Omega_B^1M \arrow{d}{z} \arrow{r}{f} & P_0\otimes_CB \arrow{d}{w} \arrow{r}{a\circ w} & M \arrow{r} \arrow{d}{id} & 0\\
0 \arrow{r} & \Omega_C^1M \arrow{r}{g} & P_0 \arrow{r}{a} & M \arrow{r} & 0.
\end{tikzcd} 
 \end{equation}
Here, $\it{id}$ is the identity map, $w$ is a projective cover of $P_0$, and $z$ is induced by the universal property of the kernel.  By the Snake Lemma, we know $\ker z\cong\ker w$.  Thus, Proposition $\ref{SS, Proposition 3.6}$ and Proposition $\ref{SS Prop 4.1}$ implies that $\ker z\cong\ker w\cong\tau_C^{-1}\Omega_C^{-1}P_0$ .  Thus, we have an exact sequence
\[
0\rightarrow\tau_C^{-1}\Omega_C^{-1}P_0\xrightarrow{i}\Omega_B^1M\xrightarrow{z}\Omega_C^1M\rightarrow\coker z\rightarrow 0.
\]
 Since the morphism $w$ is surjective and $id$ is clearly injective, we may use the Snake Lemma again to say that $\coker z=0$.  Apply $\text{Hom}_B(-,M)$ to obtain an exact sequence
\begin{equation}
0\rightarrow\text{Hom}_B(\Omega_C^1M,M)\xrightarrow{\overline{z}}\text{Hom}_B(\Omega_B^1M,M)\rightarrow\text{Hom}_B(\tau_C^{-1}\Omega_C^{-1}P_0,M).
\end{equation}
Since $\text{Hom}_B(\tau_C^{-1}\Omega_C^{-1}P_0,M)=0$ by assumption, we have that $\overline{z}$ is an isomorphism.  To show $\overline{f}$ is surjective, let $h\in\text{Hom}_B(\Omega_B^1M,M)$.  Since $\overline{z}$ is an isomorphism, we know there exists a morphism $j\in\text{Hom}_B(\Omega_C^1M,M)$ such that $h=j\circ z$.  
\[
\begin{tikzcd}
\Omega_B^1M\arrow{dr}{h=j\circ z}\arrow{d}{z}\\
\Omega_C^1M\arrow{r}{j} & M
\end{tikzcd}
\]
Since $\overline{g}$ is surjective, there exists a morphism $l\in\text{Hom}_B(P_0,M)$ such that $j=l\circ g$.  
\[
\begin{tikzcd}
\Omega_C^1M\arrow{r}{j=l\circ g}\arrow{d}{g} & M\\
P_0\arrow{ur}{l}
\end{tikzcd}
\]
Thus, we have $h=l\circ g\circ z$. 
\[
\begin{tikzcd}
\Omega_B^1M\arrow{d}{z}\arrow{ddr}{h=l\circ g\circ z}\\
\Omega_C^1\arrow{d}{g}\\
P_0\arrow{r}{l} & M
\end{tikzcd}
\]
From our commutative diagram (3), we know $g\circ z=w\circ f$.  Thus, we have the following commutative diagram.  
\[
\begin{tikzcd}
\Omega_B^1M\arrow{d}{f}\arrow{ddr}{h=l\circ w\circ f}\\
P_0\otimes_CB\arrow{d}{w}\\
P_0\arrow{r}{l} & M
\end{tikzcd}
\]
This gives $h=l\circ w\circ f$ and we conclude that $\overline{f}$ is surjective.
  \end{proof}
 
 For an illustration of this theorem, see Examples $\ref{Extfirst}$ and $\ref{Extsecond}$ in section 5.
 \subsection{Corollaries}
 We now examine several corollaries of our main result.  For the first corollary, we say $M$ is a $\it{partial~cotilting~module}$ if $\id_CM\leq1$ and $\Ext_C^1(M,M)=0$ and $\it{cotilting}$ if the number of pairwise, non-isomorphic, indecomposable summands of $M$ equals the number of isomorphism classes of simple $C$-modules.
 \begin{cor} 
 \label{Tilt Ext}
 If M is a partial tilting or cotilting C-module, then M is a rigid B-module.
 \end{cor}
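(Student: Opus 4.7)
The plan is to deduce the corollary directly from Theorem~\ref{Main Ext} by verifying its hypotheses using the characterizations of projective and injective dimension provided by Lemma~\ref{Homological Result}. The two cases are dual, so I would focus on the partial tilting case and remark that the cotilting case follows by duality.

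Suppose first that $M$ is partial tilting, so $\pd_C M\leq 1$ and $\Ext_C^1(M,M)=0$. By Lemma~\ref{Homological Result}(a), the condition $\pd_C M\leq 1$ is equivalent to $\Hom_C(\tau_C^{-1}\Omega_C^{-1}C,M)=0$. The key observation is that the projective cover $P_0$ of $M$ is a direct summand of $C$ (since every indecomposable projective $C$-module is a summand of $C$), and hence $\tau_C^{-1}\Omega_C^{-1}P_0$ is a direct summand of $\tau_C^{-1}\Omega_C^{-1}C$ because both $\tau_C^{-1}$ and $\Omega_C^{-1}$ commute with direct sums. Therefore $\Hom_C(\tau_C^{-1}\Omega_C^{-1}P_0,M)=0$, which is exactly the hypothesis of Theorem~\ref{Main Ext}(a). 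Applying that theorem gives $\Ext_B^1(M,M)=0$.

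For the partial cotilting case, I would run the dual argument: use Lemma~\ref{Homological Result}(b) to rewrite $\id_C M\leq 1$ as $\Hom_C(M,\tau_C\Omega_C DC)=0$, note that the injective envelope $I_0$ of $M$ is a direct summand of $DC$, and conclude that $\tau_C\Omega_C I_0$ is a direct summand of $\tau_C\Omega_C DC$, whence $\Hom_C(M,\tau_C\Omega_C I_0)=0$. Then Theorem~\ref{Main Ext}(b) gives the conclusion.

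There is essentially no obstacle here beyond the bookkeeping: the entire content of the corollary is the passage from the global statement ``vanish against $\tau_C^{-1}\Omega_C^{-1}C$'' (equivalent to $\pd_C M\leq 1$) to the local statement ``vanish against $\tau_C^{-1}\Omega_C^{-1}P_0$'' needed to invoke Theorem~\ref{Main Ext}. The only thing worth being explicit about is the elementary fact that $\tau_C^{-1}$ and $\Omega_C^{-1}$ preserve direct summands, so that the summand relation $P_0 \in \add C$ transfers to $\tau_C^{-1}\Omega_C^{-1}P_0 \in \add(\tau_C^{-1}\Omega_C^{-1}C)$; the analogous remark for $\tau_C\Omega_C$ handles the cotilting case.
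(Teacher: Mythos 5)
Your proof is correct and follows essentially the same route as the paper: translate $\pd_C M\leq 1$ (resp.\ $\id_C M\leq 1$) into the Hom-vanishing condition of Lemma~\ref{Homological Result} and then invoke Theorem~\ref{Main Ext}. The only difference is that you spell out the step the paper leaves implicit, namely that $P_0\in\add C$ forces $\tau_C^{-1}\Omega_C^{-1}P_0\in\add(\tau_C^{-1}\Omega_C^{-1}C)$, which is a worthwhile clarification but not a different argument.
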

 \begin{proof}
 We assume $M$ is a partial tilting module.  The proof for the case $M$ is a partial cotilting module is dual.  Since $\pd_CM\leq1$, we have that Lemma $\ref{Homological Result}$ implies $\text{Hom}_C(\tau_C^{-1}\Omega_C^{-1}C,M)=0$.
 The statement now follows from Theorem $\ref{Main Ext}$. 
 \end{proof}
 The next result holds in the specific case where $C$ is tilted and $B$ is cluster-tilted.
 \begin{cor} 
 \label{Ext Cor 1}
 Let C be a tilted algebra with B the corresponding cluster-titled algebra.  Suppose M is an indecomposable, rigid C-module.  Then M is a rigid B-module.
 \end{cor}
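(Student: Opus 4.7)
The plan is to apply Theorem \ref{Main Ext} by verifying one of its two hypotheses, splitting into cases according to the homological dichotomy for indecomposable modules over a tilted algebra. Specifically, since $C$ is tilted and $M$ is indecomposable, Proposition \ref{Tilting Prop}(b) guarantees that either $\pd_C M \leq 1$ or $\id_C M \leq 1$. I will show that the first condition delivers hypothesis (a) of Theorem \ref{Main Ext}, while the second delivers hypothesis (b), so in either case $M$ is rigid as a $B$-module.

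First, suppose $\pd_C M \leq 1$. Then Lemma \ref{Homological Result}(a) gives $\Hom_C(\tau_C^{-1}\Omega_C^{-1}C, M) = 0$. Let $P_0 \to M$ be a projective cover in $\mathop{\text{mod}}C$; since $P_0$ is projective, it lies in $\add C$, so there exist an integer $n$ and a $C$-module $X$ with $P_0 \oplus X \cong C^n$. Applying the additive functor $\tau_C^{-1}\Omega_C^{-1}$ gives $\tau_C^{-1}\Omega_C^{-1}P_0$ as a direct summand of $\tau_C^{-1}\Omega_C^{-1}C^n$, and since $\Hom_C(-, M)$ is additive the vanishing of $\Hom_C(\tau_C^{-1}\Omega_C^{-1}C^n, M)$ forces $\Hom_C(\tau_C^{-1}\Omega_C^{-1}P_0, M) = 0$. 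Theorem \ref{Main Ext}(a) then implies $M$ is a rigid $B$-module.

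Dually, if $\id_C M \leq 1$, then Lemma \ref{Homological Result}(b) yields $\Hom_C(M, \tau_C\Omega_C DC) = 0$. Taking an injective envelope $M \to I_0$ in $\mathop{\text{mod}}C$ and noting that $I_0 \in \add DC$, the same additivity argument produces $\Hom_C(M, \tau_C\Omega_C I_0) = 0$, so Theorem \ref{Main Ext}(b) gives the conclusion.

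There is no real obstacle: the only point that requires care is the passage from the global modules $C$ and $DC$ appearing in Lemma \ref{Homological Result} to the projective cover $P_0$ and injective envelope $I_0$ appearing in Theorem \ref{Main Ext}, which is handled by the additivity of $\tau_C^{-1}\Omega_C^{-1}$, $\tau_C\Omega_C$, and $\Hom_C$. The essential input is the tilted splitting property, Proposition \ref{Tilting Prop}(b), which fails for indecomposable modules over a general algebra of global dimension $2$ and is precisely why the statement is restricted to tilted $C$.
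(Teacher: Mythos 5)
Your proof is correct and follows essentially the same route as the paper, which deduces the result from Proposition \ref{Tilting Prop}(b) together with Corollary \ref{Tilt Ext} (whose proof is exactly your chain Lemma \ref{Homological Result} $\Rightarrow$ Theorem \ref{Main Ext}). You merely inline that corollary and make explicit the passage from $C$ (resp.\ $DC$) to the summand $P_0$ (resp.\ $I_0$) via additivity, a detail the paper leaves implicit.
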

 \begin{proof}
 Let $M$ be an indecomposable, rigid $C$-module.  By Proposition $\ref{Tilting Prop}$(b), we have that $\pd_CM\leq1$ or $\id_CM\leq1$.  Since $M$ is rigid, we have $M$ is partial tilting or partial cotilting.  By Corollary $\ref{Tilt Ext}$, our statement follows.  
 \end{proof}
 We now state the converse to Theorem $\ref{Main Ext}$.  We note that if $M$ is a $C$-module which is rigid as a $B$-module, then $M$ is trivially a rigid $C$-module.   
 \begin{prop} Assume $C$ is an algebra of global dimension 2.  Let M be a C-module with a projective cover $g\!:P_0\rightarrow M$ and an injective envelope $h\!:M\rightarrow I_0$ in $\mathop{\emph{mod}}C$.  Suppose M is a rigid B-module.
 \begin{enumerate}
 \item[$\emph{(a)}$] If $\emph{Ext}_B^1(P_0,M)=0$, then $\emph{Hom}_C(\tau_C^{-1}\Omega_C^{-1}P_0,M)=0$. 
 
 \item[$\emph{(b)}$] If $\emph{Ext}_B^1(M,I_0)=0$, then $\emph{Hom}_C(M,\tau_C\Omega_C I_0)=0$.
 \end{enumerate}
 \end{prop}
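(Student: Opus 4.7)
The plan is to avoid the commutative-diagram/five-lemma machinery of Theorem \ref{Main Ext} and instead work directly with the minimal $B$-projective resolution of $P_0$ itself. Parts (a) and (b) are dual, so I will describe (a) in detail and then sketch (b).

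By Proposition \ref{SS, Proposition 3.6}(a) applied to $P_0$, combined with Proposition \ref{SS Prop 4.1}(c) and Lemma \ref{Last One} (which identifies $P_0 \otimes_C B$ as a projective cover of $P_0$ in $\mathop{\text{mod}}B$), there is a short exact sequence in $\mathop{\text{mod}}B$
\[
0 \to \tau_C^{-1}\Omega_C^{-1}P_0 \to P_0 \otimes_C B \xrightarrow{w} P_0 \to 0.
\]
Since $P_0 \otimes_C B$ is projective in $\mathop{\text{mod}}B$, applying $\Hom_B(-,M)$ produces the four-term exact sequence
\[
0 \to \Hom_B(P_0, M) \xrightarrow{\bar w} \Hom_B(P_0 \otimes_C B, M) \to \Hom_B(\tau_C^{-1}\Omega_C^{-1}P_0, M) \to \Ext_B^1(P_0, M) \to 0.
\]

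The central step is to verify that $\bar w$ is an isomorphism. Because $E$ acts trivially on both $P_0$ and $M$ (they are $C$-modules pulled back along $\pi\!:\!B\to C$), we have $\Hom_B(P_0, M) = \Hom_C(P_0, M)$; the tensor-hom adjunction together with $\pi\sigma = 1_C$ gives $\Hom_B(P_0 \otimes_C B, M) \cong \Hom_C(P_0, M)$; and unwinding the definitions shows $\bar w$ corresponds to the identity under these identifications. Granted this, exactness collapses the sequence to an isomorphism $\Hom_B(\tau_C^{-1}\Omega_C^{-1}P_0, M) \cong \Ext_B^1(P_0, M)$, which is zero by hypothesis. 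A final appeal to the triviality of the $E$-action rewrites the left-hand Hom as $\Hom_C(\tau_C^{-1}\Omega_C^{-1}P_0, M)$, completing (a).

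For (b), the dual route uses Proposition \ref{SS, Proposition 3.6}(b) and Proposition \ref{SS Prop 4.1}(d), together with the observation that $D(B\otimes_C DI_0)$ is an injective envelope of $I_0$ in $\mathop{\text{mod}}B$ (the dual of Lemma \ref{Last One}), to obtain a short exact sequence $0 \to I_0 \to D(B\otimes_C DI_0) \to \tau_C\Omega_C I_0 \to 0$; applying $\Hom_B(M,-)$ and the dual hom-tensor adjunction then yields $\Hom_C(M, \tau_C\Omega_C I_0) \cong \Ext_B^1(M, I_0) = 0$. The one spot requiring genuine care is checking that $\bar w$ (and its dual) really are the identity under the adjunction — any convention slip there would undermine the collapse. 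I also note in passing that this route never invokes the rigidity of $M$ as a $B$-module; that hypothesis appears only to mirror the framing of Theorem \ref{Main Ext}, whereas the vanishing $\Ext_B^1(P_0, M) = 0$ (respectively $\Ext_B^1(M, I_0) = 0$) is what is actually used.
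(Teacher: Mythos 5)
Your proof is correct, and it diverges from the paper's at the decisive step. Both arguments start from the same short exact sequence $0 \to \tau_C^{-1}\Omega_C^{-1}P_0 \to P_0\otimes_CB \xrightarrow{w} P_0 \to 0$ and apply $\Hom_B(-,M)$. The paper extracts from the hypothesis only that the map $\Hom_B(P_0\otimes_CB,M)\to\Hom_B(\tau_C^{-1}\Omega_C^{-1}P_0,M)$ is surjective, and then kills each morphism $j\colon\tau_C^{-1}\Omega_C^{-1}P_0\to M$ separately: the map $P_0\otimes_CB\to M$ through which $j$ factors is lifted along the surjection $g\colon P_0\to M$ using projectivity of $P_0\otimes_CB$, and then $\Hom_C(\tau_C^{-1}\Omega_C^{-1}P_0,P_0)=0$ (from Lemma \ref{Homological Result}, since $\pd_CP_0=0$) forces $j=0$. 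You instead show that the first map $\overline{w}\colon\Hom_B(P_0,M)\to\Hom_B(P_0\otimes_CB,M)$ is an isomorphism via the induction adjunction and the triviality of the $E$-action, so exactness hands you $\Hom_B(\tau_C^{-1}\Omega_C^{-1}P_0,M)\cong\Ext_B^1(P_0,M)$ outright. Your route is shorter, dispenses with Lemma \ref{Homological Result} and with the fact that $g$ is a projective cover (only projectivity of $P_0$ is needed), and in fact establishes the stronger two-sided statement $\Hom_C(\tau_C^{-1}\Omega_C^{-1}P_0,M)\cong\Ext_B^1(P_0,M)$, i.e.\ an if-and-only-if. The price is the adjunction bookkeeping you flag, which does check out: $\overline{w}$ is the identity on $\Hom_C(P_0,M)$ under the standard identifications, and $E$ acts trivially on $P_0\otimes_CE$ inside $P_0\otimes_CB$, so $\Hom_B$ and $\Hom_C$ agree on the kernel term --- the latter observation is also the correct justification for the paper's final ``restriction of scalars'' step, which as literally stated points in the wrong direction. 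Your side remark is accurate as well: neither your argument nor the paper's ever uses the hypothesis that $M$ is rigid over $B$.
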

 \begin{proof}
 We prove case (a) with case (b) being dual.  Consider the following sequence in $\mathop{\text{mod}}B$ guaranteed by Proposition $\ref{SS, Proposition 3.6}$ and Proposition $\ref{SS Prop 4.1}$.
 \[
 0\rightarrow\tau_C^{-1}\Omega_C^{-1}P_0\xrightarrow{f}P_0\otimes_CB\rightarrow P_0\rightarrow 0.
 \]
 Apply $\text{Hom}_B(-,M)$ to obtain
 \[
 0\rightarrow\text{Hom}_B(P_0,M)\rightarrow\text{Hom}_B(P_0\otimes_CB,M)\xrightarrow{\overline{f}}\text{Hom}_B(\tau_C^{-1}\Omega_C^{-1}P_0,M)\rightarrow\text{Ext}_B^1(P_0,M).
 \]
 Since the sequence is exact and $\text{Ext}_B^1(P_0,M)=0$ by assumption, we have that $\overline{f}$ is surjective.  This implies that any morphism of $B$-modules, $j\!:\tau_C^{-1}\Omega_C^{-1}P_0\rightarrow M$, factors through the projective $B$-module $P_0\otimes_CB$.  Since $g\!:P_0\rightarrow M$ is a surjective morphism, there exists a morphism $k\!:\tau_C^{-1}\Omega_C^{-1}P_0\rightarrow P_0$ such that $j=g\circ k$.  
 \[
\begin{tikzcd}
& \tau_C^{-1}\Omega_C^{-1}P_0\arrow{d}{j=g\circ k}\arrow{dl}[swap]{k}\\
P_0\arrow{r}{g} & M
\end{tikzcd}
\] 
 
But $\pd_CP_0=0$ and Lemma $\ref{Homological Result}$ implies $\text{Hom}_B(\tau_C^{-1}\Omega_C^{-1}P_0,P_0)=0$.  Thus $k$ must be the 0 morphism.  This forces $j$ to also be the 0 morphism.  Since $j$ was arbitrary we conclude that $\text{Hom}_B(\tau_C^{-1}\Omega_C^{-1}P_0,M)=0$ which further implies $\text{Hom}_C(\tau_C^{-1}\Omega_C^{-1}P_0,M)=0$ by restriction of scalars. 
\end{proof}

\section{Examples}
In this section we illustrate our main results with several examples.  We will use the following throughout this section.  Let $A$ be the path algebra of the following quiver:
\[
\xymatrix@R10pt{&&&4\ar[dl]\\1&2\ar[l]&3\ar[l]\\ &&&5\ar[ul]}
\]
\par
Since $A$ is a hereditary algebra, we may construct a tilted algebra.  To do this, we need an $A$-module which is tilting.  Consider the Auslander-Reiten quiver of $A$ which is given by:
\[
\xymatrix@C=10pt@R=0pt{
{\begin{array}{c} \bf 1\end{array}}\ar[dr]&&
{\begin{array}{c} 2\end{array}}\ar[dr]&&
{\begin{array}{c} 3\end{array}}\ar[dr]&&
{\begin{array}{c}  \bf 4\ 5\\  \bf 3\\ \bf 2\\ \bf 1 \end{array}}\ar[dr]&&
\\
&{\begin{array}{c}  \bf 2\\ \bf 1\end{array}}\ar[dr]\ar[ur]&&
{\begin{array}{c} 3\\2\end{array}}\ar[dr]\ar[ur]&&
{\begin{array}{c} 4\,5\\33\\2\\1\end{array}}\ar[dr]\ar[ur]&&
{\begin{array}{c} 4\,5\\3\\2\end{array}}\ar[dr]&&
\\
&&{\begin{array}{c} 3\\2\\1\end{array}}\ar[dr]\ar[ur]\ar[r]&
{\begin{array}{c} 4\\3\\2\\1\end{array}}\ar[r]&
{\begin{array}{c} 4\,5\\33\\22\\1\end{array}}\ar[dr]\ar[ur]\ar[r]&
{\begin{array}{c} 5\\3\\2\end{array}}\ar[r]&
{\begin{array}{c} 4\,5\\33\\2\end{array}}\ar[dr]\ar[ur]\ar[r]&
{\begin{array}{c} 4\\3\end{array}}\ar[r]&
{\begin{array}{c} 4\,5\\3\end{array}}\ar[dr]\ar[r]&
{\begin{array}{c}  \bf 5\end{array}}
\\
&&&{\begin{array}{c} \bf  5\\ \bf 3\\ \bf 2\\ \bf 1\end{array}}\ar[ur]&&
{\begin{array}{c} 4\\3\\2\end{array}}\ar[ur]&&
{\begin{array}{c} 5\\3\end{array}}\ar[ur]&&
{\begin{array}{c} 4\end{array}}
}
\]

Let $T$ be the tilting $A$-module
\[
T=
{\begin{array}{c} 5\end{array}} \oplus
{\begin{array}{c} 4\,5\\3\\2\\1\end{array}} \oplus
{\begin{array}{c} 5\\3\\2\\1\end{array}} \oplus
{\begin{array}{c} 2\\1\end{array}} \oplus
{\begin{array}{c} 1\end{array}} 
\]

The corresponding titled algebra $C=\text{End}_AT$ is given by the bound quiver
$$\begin{array}{cc}
\xymatrix{1\ar[r]^\za&2\ar[r]^\zb&3\ar[r]^\zg&4\ar[r]&5}
&\quad\za\zb\zg=0\end{array}$$

Then, the Auslander-Reiten quiver of $C$ is given by: 

$$\xymatrix@C=10pt@R=0pt
{&&&{\begin{array}{c} 2\\ 3\\4\\5 \end{array}}\ar[dr]&&
&&
\\
&&{\begin{array}{c} 3\\4\\5 \end{array}}\ar[ur]\ar[dr]&&
{\begin{array}{c} 2\\ 3\\4 \end{array}}\ar[dr]&
\\
&{\begin{array}{c} 4\\5  \end{array}}\ar[ur]\ar[dr]&&
{\begin{array}{c} 3\\ 4  \end{array}}\ar[ur]\ar[dr]&&
{\begin{array}{c} 2\\3  \end{array}}\ar[r]\ar[dr]&
{\begin{array}{c} 1\\2\\ 3 \end{array}}\ar[r]&
{\begin{array}{c} 1\\2 \end{array}}\ar[dr]&&
\\
{\begin{array}{c}\ \\5\\ \  \end{array}}\ar[ur]&&
{\begin{array}{c}\ \\ 4\\ \  \end{array}}\ar[ur]&&
{\begin{array}{c}\ \\ 3\\ \  \end{array}}\ar[ur]&&
{\begin{array}{c}\ \\ 2\\ \  \end{array}}\ar[ur]&&
{\begin{array}{c}\ \\ 1\\ \  \end{array}}&&
}$$

The corresponding cluster-tilted algebra $B=C\ltimes\text{Ext}_C^2(DC,C)$ is given by the bound quiver 
$$\begin{array}{cc}
\xymatrix{1\ar[r]^\za&2\ar[r]^\zb&3\ar[r]^\zg&4\ar@/^20pt/[lll]^\zd\ar[r]&5}
&\quad \za\zb\zg=\zb\zg\zd=\zg\zd\za=\zd\za\zb=0 
\end{array}$$

Then, the Auslander-Retien quiver of $B$ is given by:

$$\xymatrix@C=8pt@R=0pt
{&&{\begin{array}{c} \end{array}}
&&{\begin{array}{c} 2\\ 3\\4\\5 \end{array}}\ar[dr]&&
{\begin{array}{c} \end{array}}&& 
{\begin{array}{c} 5 \end{array}}\ar[dr]&&
{\begin{array}{c} 4\\ 1\\2 \end{array}}\ar[dr]&&
{\begin{array}{c} \end{array}}
\\
&{\begin{array}{c} 4\\1 \end{array}}\ar[dr]&&
{\begin{array}{c} 3\\4\\5 \end{array}}\ar[ur]\ar[dr]&&
{\begin{array}{c} 2\\3\\4 \end{array}}\ar[dr]&&
{\begin{array}{c} \end{array}}&&
{\begin{array}{l} \ 4\\1\ 5\\2 \end{array}}\ar[ur]\ar[dr]&&
{\begin{array}{c} 4\\1 \end{array}}&&
\\
&{\begin{array}{c} 4\\ 5 \end{array}}\ar[r]&
{\begin{array}{c} 3\\44\\ 1\ 5 \end{array}}\ar[ur]\ar[dr]\ar[r]&
{\begin{array}{c} 3\\4\\1 \end{array}}\ar[r]&
{\begin{array}{c} 3\\4 \end{array}}\ar[ur]\ar[dr]&
{\begin{array}{c} \end{array}}&
{\begin{array}{c} 2\\ 3 \end{array}}\ar[dr]\ar[r]&
{\begin{array}{c} 1\\2\\ 3 \end{array}}\ar[r]&
{\begin{array}{c} 1\\2 \end{array}}\ar[dr]\ar[ur]&
{\begin{array}{c} \end{array}}&
{\begin{array}{c} 4\\1\ 5 \end{array}}\ar[dr]\ar[r]\ar[ur]&
{\begin{array}{c} 3\\4\\1\ 5 \end{array}}&&
\\
&{\begin{array}{c}3 \\ 4\\ 1\ 5 \end{array}}\ar[ur]
&&{\begin{array}{c}\ \\ 4\\ \  \end{array}}\ar[ur]&&
{\begin{array}{c}\ \\ 3\\ \  \end{array}}\ar[ur]&&
{\begin{array}{c}\ \\ 2\\ \  \end{array}}\ar[ur]&&
{\begin{array}{c}\ \\ 1\\ \  \end{array}}\ar[ru]&&
{\begin{array}{c}\ \\ 4\\ 5  \end{array}}
}$$

We wish to illustrate Theorem $\ref{Main Homo}$ with an example for each case.  We will use Lemma $\ref{Homological Result}$ frequently so we note that 
\[
\tau_C^{-1}\Omega_C^{-1}C={\begin{array}{c}1\\2\end{array}}\oplus{\begin{array}{c}1\end{array}}~~,~~
\tau_C\Omega_C(DC)={\begin{array}{c}3\\4\end{array}}\oplus{\begin{array}{c}4\end{array}}.
\]

\begin{example}
\label{Homofirst}
We'll start with the projective dimension equal to 2.  In $\mathop{\text{mod}}C$, consider the module $M=\begin{array}{c}1\\2\end{array}$.  Since $\text{Hom}_C(\tau_C^{-1}\Omega_C^{-1}C,M)\neq0$,  Lemma $\ref{Homological Result}$ says $\pd_CM=2$.  Thus, Theorem $\ref{Main Homo}$ says $\pd_BM=\infty$ and we have the following projective resolution in $\mathop{\text{mod}}B$
\[
\dots\rightarrow
{\begin{array}{l}\ 4\\1\ 5\\2\end{array}}\rightarrow
{\begin{array}{c}3\\4\\1\,5\end{array}}\rightarrow
{\begin{array}{c}1\\2\\3\end{array}}\rightarrow
{\begin{array}{c}1\\2\end{array}}\rightarrow
{\begin{array}{c}0\end{array}}.
\] 
\end{example}
\begin{example}
\label{Homosecond}
Next, let's examine the projective case, i.e., projective dimension equal to 0.  In $\mathop{\text{mod}}C$, consider the module $M=5$.  Then $M$ is the projective $C$-module at vertex 5.  Since $\text{Hom}_C(M,\tau_C\Omega_C(DC))=0$, Lemma $\ref{Homological Result}$ says $\text{id}_CM\leq1$.  Thus, Theorem $\ref{Main Homo}$ says $\pd_BM=0$.  Now, consider $N={\begin{array}{c}4\\5\end{array}}$ in $\mathop{\text{mod}}C$. Then $N$ is a projective $C$-module.  Now, we have that $\text{Hom}_C(N,\tau_C\Omega_C(DC))\neq0$.  Thus,  Lemma $\ref{Homological Result}$ says $\id_CN=2$.  Finally, Theorem $\ref{Main Homo}$ states $N={\begin{array}{c}4\\5\end{array}}$ is not a projective $B$-module and $\pd_BN=\infty$ with the following projective resolution in $\mathop{\text{mod}}B$
\[
\dots\rightarrow
{\begin{array}{c} 3\\4\\1\,5\end{array}}\rightarrow
{\begin{array}{c}1\\2\\3\end{array}}\rightarrow
{\begin{array}{l} 4\\1\ 5\\2\end{array}}\rightarrow
{\begin{array}{c}4\\5\end{array}}\rightarrow
{\begin{array}{c}0\end{array}}.
\] 
\end{example}
\begin{example} 
\label{Homothird}
Finally, let's examine the case where the projective dimension is equal to 1.  Consider the $C$-module $M={\begin{array}{c}3\\4\end{array}}$.  Since $\text{Hom}_C(\tau_C^{-1}\Omega_C^{-1}C,M)=0$, Lemma $\ref{Homological Result}$ says $\pd_CM\leq1$ with projective resolution
\[
{\begin{array}{c} 0\end{array}}\rightarrow
{\begin{array}{c}5\end{array}}\rightarrow
{\begin{array}{c}3\\4\\5\end{array}}\rightarrow
{\begin{array}{c} 3\\4\end{array}}\rightarrow
{\begin{array}{c}0\end{array}}.
\]
Denote $P_1=5$ and $P_0={\begin{array}{c}3\\4\\5\end{array}}$.  Since $\text{Hom}_C(M,\tau_C\Omega_C(DC))\neq0$, Lemma $\ref{Homological Result}$ says $\id_CM=2$.  Also, note that $\tau_C^{-1}\Omega_C^{-1}\,P_1\not\cong\tau_C^{-1}\Omega_C^{-1}\,P_0$ because $\tau_C^{-1}\Omega_C^{-1}\,P_1=0$ while $\tau_C^{-1}\Omega_C^{-1}\,P_0={\begin{array}{c}1\\2\end{array}}$.  Thus, Theorem $\ref{Main Homo}$ says $\pd_BM=\infty$ and we have the following projective resolution in $\mathop{\text{mod}}B$
\[
\dots\rightarrow
{\begin{array}{c} 2\\3\\4\\5\end{array}}\rightarrow
{\begin{array}{c}5\end{array}}\oplus
{\begin{array}{c}1\\2\\3\end{array}}\rightarrow
{\begin{array}{c} 3\\4\\1\,5\end{array}}\rightarrow
{\begin{array}{c}3\\4\end{array}}\rightarrow
{\begin{array}{c}0\end{array}}.
\]
Next, consider the $C$-module $N={\begin{array}{c}2\\3\\4\end{array}}$.  Since $\text{Hom}_C(\tau_C^{-1}\Omega_C^{-1}C,N)=0$, Lemma $\ref{Homological Result}$ says that $\pd_CN=1$ with minimal projective resolution
\[
{\begin{array}{c} 0\end{array}}\rightarrow
{\begin{array}{c}5\end{array}}\rightarrow
{\begin{array}{c}2\\3\\4\\5\end{array}}\rightarrow
{\begin{array}{c}2\\ 3\\4\end{array}}\rightarrow
{\begin{array}{c}0\end{array}}.
\]
Denote $P_1^{\prime}=5$ and $P_0^{\prime}={\begin{array}{c}2\\3\\4\\5\end{array}}$.  Since $\text{Hom}_C(N,\tau_C\Omega_C(DC))=0$, Lemma $\ref{Homological Result}$ says that $\text{id}_CN\leq1$.  Also, note that $\tau_C^{-1}\Omega_C^{-1}\,P_1^{\prime}\cong\tau_C^{-1}\Omega_C^{-1}\,P_0^{\prime}=0$.  Thus, Theorem $\ref{Main Homo}$ says $\pd_BN=1$ and Corollary $\ref{Resolution}$ implies the minimal projective resolution in $\mathop{\text{mod}}C$
is the same as the minimal projective resolution in $\mathop{\text{mod}}B$. 

\end{example}

We will illustrate Theorem $\ref{Main Ext}$ with two examples.
\begin{example}
\label{Extfirst}
Consider the $C$-module $M={\begin{array}{c}4\\5\end{array}}\oplus{\begin{array}{c}1\\2\end{array}}$.  To use Theorem $\ref{Main Ext}$ we need several preliminary calculations.  We have a projective cover and an injective envelope in $\mathop{\text{mod}}C$
\[
f\!:{\begin{array}{c}4\\5\end{array}}\oplus{\begin{array}{c}1\\2\\3\end{array}}\rightarrow M~~,~~g\!:M\rightarrow{\begin{array}{c}2\\3\\4\\5\end{array}}\oplus{\begin{array}{c}1\\2\end{array}}.
\]
Let us denote ${\begin{array}{c}4\\5\end{array}}\oplus{\begin{array}{c}1\\2\\3\end{array}}$ by $P_0$ and${\begin{array}{c}2\\3\\4\\5\end{array}}\oplus{\begin{array}{c}1\\2\end{array}}$ by $I_0$.  Then we have
\[ 
\tau_C^{-1}\Omega_C^{-1}P_0={\begin{array}{c}1\\2\end{array}}~~,~~
\tau_C\Omega_CI_0={\begin{array}{c}4\end{array}}.
\]
It is easily seen that $\text{Ext}_C^1(M,M)=0$ but $\text{Ext}_B^1(M,M)\neq0$ with self-extension
\[
{\begin{array}{c}0\end{array}}\rightarrow
{\begin{array}{c}1\\2\end{array}}\rightarrow
{\begin{array}{l} 4\\1\ 5\\2\end{array}}\rightarrow
{\begin{array}{c}4\\5\end{array}}\rightarrow
{\begin{array}{c}0\end{array}}
\]
Note that $\text{Hom}_C(\tau_C^{-1}\Omega_C^{-1}P_0,M)\neq0$ and $\text{Hom}_C(M,\tau_C\Omega_CI_0)\neq0$ in accordance with Theorem $\ref{Main Ext}$.

\end{example}
\begin{example}
\label{Extsecond}
Consider the $C$-module $N={\begin{array}{c}5\end{array}}\oplus{\begin{array}{c}3\end{array}}$.  We have a projective cover and an injective envelope in $\mathop{\text{mod}}C$ 
\[
f\!:{\begin{array}{c}5\end{array}}\oplus{\begin{array}{c}3\\4\\5\end{array}}\rightarrow M~~,~~g\!:M\rightarrow{\begin{array}{c}2\\3\\4\\5\end{array}}\oplus{\begin{array}{c}1\\2\\3\end{array}}.
\]
Denote ${\begin{array}{c}5\end{array}}\oplus{\begin{array}{c}3\\4\\5\end{array}}$ by $P_0$ and ${\begin{array}{c}2\\3\\4\\5\end{array}}\oplus{\begin{array}{c}1\\2\\3\end{array}}$ by $I_0$.  Then we have
\[
\tau_C^{-1}\Omega_C^{-1}P_0=1~~,~~\tau_C\Omega_CI_0=0.
\]
Now, we have $\text{Ext}_C^1(M,M)=0$ and $\text{Ext}_B^1(M,M)=0$.  Note, $\text{Hom}_C(\tau_C^{-1}\Omega_C^{-1}P_0,M)=0$ and $\text{Hom}_C(M,\tau_C\Omega_CI_0)=0$ in accordance with Theorem $\ref{Main Ext}$.
\end{example}
\section*{Acknowledgements}
The author gratefully acknowledges support from the University of Connecticut and would like to thank Ralf Schiffler for his guidance and helpful comments.

\noindent Department of Mathematics, University of Connecticut, Storrs, CT 06269-3009, USA
\it{E-mail address}: \bf{stephen.zito@uconn.edu}

\end{document}